\renewcommand{\epsilon}{{\varepsilon}}
\numberwithin{equation}{section}
\newtheorem{theorem}{Theorem}[section]
\newtheorem{lemma}[theorem]{Lemma}
\newtheorem{remark}[theorem]{Remark}
\newtheorem{open}[theorem]{Open Problem}
\newtheorem{proposition}[theorem]{Proposition}
\title[Soliton dynamics for logarithmic NLS]
{Gausson dynamics for logarithmic \\ Schr\"odinger equations}
\author[A.H. Ardila]{Alex H. Ardila}
\author[M.\ Squassina]{Marco Squassina}
\address[A.H. Ardila]{Instituto Nacional de Matem\'atica Pura e Aplicada - IMPA
	\newline\indent
Estrada Dona Castorina, 110 CEP 22460-320
	\newline\indent
Rio de Janeiro, RJ - Brasil}
\email{ardila@impa.br}
\address[M.\ Squassina]{Dipartimento di Matematica e Fisica \newline\indent
	Universit\`a Cattolica del Sacro Cuore \newline\indent
	Via dei Musei 41, I-25121 Brescia, Italy}
\email{marco.squassina@unicatt.it}
\subjclass[2010]{81Q05, 35Q51, 35Q40, 35Q41, 37K40.}
\keywords{Soliton dynamics, logarithmic Schr\"odinger equation, Gausson.}
\thanks{The first author was partially supported by CNPq/Brazil, through grant No. 152672/2016-8. The second author is member of the Gruppo Nazionale per l'Analisi Matematica, la Probabilit\`a e le loro Applicazioni and of INdAM}
\begin{document}

\begin{abstract}
	In this paper we study the validity of a Gausson (soliton) dynamics of the logarithmic Schr\"odinger equation in presence of a smooth external potential.
%	We consider the Cauchy problem with initial data in the form 
%	$
%	e^{ix\cdot v_{0}/\epsilon}R((x-x_{0})/\epsilon), 
%	$
%	where $R$ is the Gausson solution of the associated autonomous elliptic equation. We show that, in the semi-classical regime, the solution  has the form
%	$
%	e^{ix\cdot\nu(t)/\epsilon}R((x-x(t))/\epsilon)$, where $(x(t), \nu(t))$ is the solution of the Newtonian equation $\dot{x}(t)=\nu(t)$, $\dot{\nu}(t)=-\nabla V(x(t))$ with $x(0)=x_{0}$ and $\nu(0)=v_{0}$.
\end{abstract}

\maketitle

%\begin{center}
%	\begin{minipage}{8.5cm}
%		\small
%		\tableofcontents
%	\end{minipage}
%\end{center}
%
%\medskip

%\centerline{\scshape Alex H. Ardila}
%\medskip
%{\footnotesize
%please put the address of the first author
%\centerline{Instituto Nacional de Matem\'atica Pura e Aplicada - IMPA,}
%\centerline{Estrada Dona Castorina 110, CEP 22460-320, Rio de Janeiro, RJ, Brazil.}
%} % Do not forget to end the {\footnotesize by the sign }

\section{Introduction}
This paper is concerned with the so-called soliton dynamics behaviour for the logarithmic Schr\"odinger 
equation with an external potential
\begin{equation}\label{0NL}
\begin{cases} 
i\epsilon\partial_{t}u_{\epsilon}+\frac{\epsilon^{2}}{2}\Delta u_{\epsilon}-V(x)u_{\epsilon}+\,u_{\epsilon}\,\mathrm{Log}\,\left|u_{\epsilon}\right|^{2}=0\\
u_{\epsilon}(0,t)=u_{\epsilon,0}(x),
\end{cases} 
\end{equation}
that is the study of the behaviour of the solution $u_{\epsilon}$,  in the semi-classical limit $\epsilon\rightarrow 0$;
namely when the Planck constant $\epsilon=\hbar$ tends to zero, by taking as initial datum for the Cauchy problem \eqref{0NL} a function ({\em Gausson type}) of the form
\begin{equation}\label{ID}
u_{\epsilon,0}(x)=e^{\frac{i}{\epsilon}x\cdot v_{0}}R\Big(\frac{x-x_{0}}{\epsilon}\Big), \quad R(x):=e^{\frac{1+N}{2}}e^{-|x|^{2}}, \,\, x\in \mathbb{R}^{N}.
\end{equation}
Here,  $u_{\epsilon}=u_{\epsilon}(x,t)$ is a complex-valued function of $(x,t)\in \mathbb{R}^{N}\times\mathbb{R}$, $N\geq1$, $i$ is the imaginary unit, 
$V:{\mathbb{R}^{N}}\rightarrow \mathbb{R}$ is an external potential and $x_{0}$, $v_{0}\in \mathbb{R}^{N}$, $v_{0}\neq 0$, are the initial position and velocity for the Newtonian system
\begin{equation}\label{HS}
\begin{cases} 
 \dot{x}(t)=\nu(t), \quad x(0)=x_{0}\\
 \dot{\nu}(t)=-\nabla V(x(t)), \quad \nu(0)=v_{0}.
\end{cases} 
\end{equation}
Notice that the classical Hamiltonian related to \eqref{HS} is 
\begin{equation}\label{H}
\mathcal{H}(t)=\frac{1}{2}|\nu(t)|^{2}+V(x(t))
\end{equation} 
and is conserved in the time.

Equation~\eqref{0NL} was proposed by Bialynicki-Birula and Mycielski \cite{CAS} in 1976 as a model of nonlinear wave mechanics. This NLS equation has wide applications in quantum optics \cite{Ap1}, nuclear physics \cite{HE},  geophysical applications of magma transport \cite{MFG}, effective quantum and gravity, theory of superfluidity,  Bose-Einstein condensation and open quantum systems; see \cite{OATL, APLES} and the references therein.  We refer to \cite{CL, TA, CALO, AHA1, FSCALE} for a study of existence and stability of standing waves, as well as for a study of the Cauchy problem in a suitable functional framework.

Rigorous results about of the soliton dynamics for nonlinear Schr\"odinguer equation with a power nonlinearity $|u|^{p-1}u$ were obtained in various papers by J.C. Bronski, R.L. Jerrard \cite{BJ} and S. Keraani \cite{SK2}. The main ingredients of the argument are the {\em conservation laws} of NLS and of the Hamiltonian \eqref{H} combined with {\em modulational stability estimates} proved by M. Weinstein \cite{ELW, MN5}. 
In recent years, the so-called soliton dynamics has attracted a great deal of attention from both the mathematicians and physicists; see for example \cite{SQ1, AS1, SSQ1, ADSQ1, EMBPMS}.  

Throughout this paper we assume that the potential $V$ in \eqref{0NL} is a $C^{3}({\mathbb R}^{N})$ function bounded with its derivatives.
Formally, the NLS \eqref{0NL} has the following two conserved quantities. The first conserved quantity is the
energy $E_{\epsilon}$ defined by
\begin{equation*}
E_{\epsilon}(u):=\frac{1}{2\epsilon^{N-2}}\int_{\mathbb{R}^{N}}\left|\nabla u\right|^{2}dx+\frac{1}{\epsilon^{N}}\int_{\mathbb{R}^{N}}V(x)|u|^{2}dx-\frac{1}{\epsilon^{N}}\int_{\mathbb{R}^{N}}\left|u\right|^{2}\mbox{Log}\left|u\right|^{2}dx.
\end{equation*}
The second conserved quantity is the mass,
\begin{equation*}
Q_{\epsilon}(u):=\frac{1}{\epsilon^{N}}\int_{\mathbb{R}^{N}}|u|^{2}\,dx.
\end{equation*}
Notice that due to the singularity of the logarithm at the origin, the energy {\em fails} to be finite as well of class $C^{1}$ on $H^{1}(\mathbb{R}^{N})$. Therefore, we consider the reflexive Banach space 
\begin{equation}\label{ASE}
W(\mathbb{R}^{N}):=\left\{u\in H^{1}(\mathbb{R}^{N}):\left|u\right|^{2}\mathrm{Log}\left|u\right|^{2}\in L^{1}(\mathbb{R}^{N})\right\}.
\end{equation}
It is well known that the energy $E_{\epsilon}$ is well-defined and of class $\mathcal{C}^1$ on $W({\mathbb R}^{N})$ (see Section \ref{S:0}). Notice that if $u\in C(\mathbb{R},{W}({\mathbb{R}^{N}}))\cap C^{1}(\mathbb{R}, {W}^{\prime}({\mathbb{R}^{N}}))$, then equation \eqref{0NL} makes sense in ${W}^{\prime}({\mathbb{R}^{N}})$,  where ${W}^{\prime}({\mathbb{R}^{N}})$ is the dual space of ${W}({\mathbb{R}^{N}})$. 

We see that the well-posedness of the Cauchy Problem for \eqref{0NL} in ${W}^{\prime}({\mathbb{R}^{N}})$ and the conservation laws follow by a standard compactness method developed in \cite[Chapter 9]{CB}.

\begin{proposition} \label{PCS}
Let $\epsilon>0$. For every $u_{\epsilon, 0}\in {W}({\mathbb{R}}^{N})$, there is a unique  global solution  $u_{\epsilon}\in C(\mathbb{R},{W}({\mathbb{R}^{N}}))\cap C^{1}(\mathbb{R}, {W}^{\prime}({\mathbb{R}^{N}}))$  of Eq. \eqref{0NL}  such that $u_{\epsilon}(x, 0)=u_{\epsilon, 0}$ and $\sup_{t\in \mathbb{R}}\left\|u_{{\epsilon}}(t)\right\|_{{W}({\mathbb{R}^{N}})}<\infty$. Furthermore, the solution $u_{{\epsilon}}(t)$ satisfies the conservation laws: 
\begin{equation*}
E_{\epsilon}(u_{\epsilon}(t))=E(u_{\epsilon, 0})\quad  and \quad Q_{\epsilon}(u_{\epsilon}(t))=Q_{\epsilon}(u_{\epsilon, 0})\quad  \text{for all $t\in \mathbb{R}$}.
\end{equation*}
\end{proposition}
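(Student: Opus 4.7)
The plan is to follow the standard compactness scheme of Cazenave \cite[Chapter 9]{CB} for nonlinear Schr\"odinger equations whose nonlinearity fails to be locally Lipschitz. Fix $\epsilon > 0$; the equation is then a standard NLS with bounded smooth external potential. The two obstructions to a direct application of Kato's fixed-point argument are that $f(z) := z\,\mathrm{Log}|z|^2$ is not Lipschitz near the origin, and the associated energy fails to be of class $C^1$ on $H^1(\mathbb{R}^{N})$. This forces the functional setting into the reflexive Banach space $W(\mathbb{R}^{N})$ defined in \eqref{ASE}.

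First I would introduce a family of regularized nonlinearities $f_m:\mathbb{C}\to\mathbb{C}$, for instance $f_m(z):=z\,\mathrm{Log}\bigl((|z|^2+1/m)/(1+|z|^2/m)\bigr)$ as in \cite[Chapter 9]{CB}, which is odd and globally Lipschitz for each $m$, converges pointwise to $f$, and satisfies the uniform monotonicity estimate
\[
\bigl|\operatorname{Im}\bigl[(f_m(z_1)-f_m(z_2))(\overline{z_1-z_2})\bigr]\bigr| \le C|z_1-z_2|^2, \qquad z_1,z_2\in\mathbb{C},
\]
with $C$ independent of $m$. Since $V\in C^3(\mathbb{R}^{N})$ is bounded with its derivatives, the operator $-\frac{\epsilon^2}{2}\Delta + V$ generates a unitary $C_0$--group on $H^s(\mathbb{R}^{N})$ for $0\le s\le 2$, and standard energy/Strichartz theory yields, for every $u_{\epsilon,0}\in W(\mathbb{R}^{N})$, a unique global solution $u_m\in C(\mathbb{R},H^1(\mathbb{R}^{N}))\cap C^1(\mathbb{R},H^{-1}(\mathbb{R}^{N}))$ of the regularized Cauchy problem, conserving the corresponding mass and approximate energy.

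Next I would derive uniform bounds. Mass conservation gives $\|u_m(t)\|_{L^2}^2 = \|u_{\epsilon,0}\|_{L^2}^2$; the regularized energy, together with the boundedness of $V$ and the logarithmic Sobolev inequality, controls $\|\nabla u_m(t)\|_{L^2}^2$ and $\int_{\mathbb{R}^{N}} |u_m|^2\,\bigl|\mathrm{Log}|u_m|^2\bigr|\,dx$ uniformly in $m$ and $t$. This provides weak-$*$ precompactness in $L^\infty_{\mathrm{loc}}(\mathbb{R},W(\mathbb{R}^{N}))$; combined with a uniform bound on $\partial_t u_m$ in $W'(\mathbb{R}^{N})$ coming from the equation itself and local Sobolev compactness, a standard Aubin--Lions--type argument produces a.e.\ convergence of a subsequence $u_m\to u_\epsilon$, and Vitali's theorem (using the uniform $L^1$ control of the logarithmic integrand to preclude concentration at zeroes) identifies the limit of $f_m(u_m)$ with $f(u_\epsilon)$. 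The limit $u_\epsilon$ then lies in $C(\mathbb{R},W(\mathbb{R}^{N}))\cap C^1(\mathbb{R},W'(\mathbb{R}^{N}))$ and solves \eqref{0NL} with the prescribed initial datum.

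For uniqueness of two solutions $u,v$ in the above class, subtracting the equations, pairing the difference $w:=u-v$ with $\bar w$ in the $W'$--$W$ duality, and taking the imaginary part eliminates both the linear dispersive term and the real potential term, leaving only the nonlinear contribution, which is bounded by $C\|w(t)\|_{L^2}^2$ via the pointwise monotonicity inequality above applied to $f$ itself. Gr\"onwall's lemma then forces $w\equiv 0$. The conservation laws pass to the limit from the corresponding identities for $u_m$, or alternatively follow by direct differentiation once the solution is known to lie in $C(\mathbb{R},W(\mathbb{R}^{N}))$. I expect the main technical obstacle to be the identification of the nonlinear term in the weak limit: because $\mathrm{Log}|z|^2$ is singular at $0$ and the approximating solutions may vanish on large sets, one genuinely needs the uniform bound on $\int |u_m|^2\bigl|\mathrm{Log}|u_m|^2\bigr|\,dx$ to run a Vitali-type argument and rule out pathological concentration.
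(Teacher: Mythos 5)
Your proposal is correct and follows essentially the same route as the paper, which proves Proposition \ref{PCS} simply by invoking the standard compactness method of \cite[Chapter 9]{CB}: regularization of the logarithmic nonlinearity, uniform mass/energy bounds, passage to the limit in $W(\mathbb{R}^{N})$, and uniqueness via the pointwise monotonicity estimate on $\operatorname{Im}\bigl[(f(z_1)-f(z_2))\overline{(z_1-z_2)}\bigr]$ combined with Gr\"onwall's lemma. The only difference is that you spell out the steps that the paper leaves entirely to the reference; the bounded smooth potential $V$ is, as you note, a harmless perturbation of that scheme.
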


We denote 
$$
\Sigma(\mathbb{R}^{N}):=\left\{u\in H^{1}(\mathbb{R}^{N}):|x|^{}u\in L^{2}(\mathbb{R}^{N}) \right\}.
$$ 
It is well known that $\Sigma(\mathbb{R}^{N})$ is a Hilbert space when is equipped with the norm 
\begin{equation*}
\|u\|_{\Sigma(\mathbb{R}^{N})}:=\sqrt{\int_{\mathbb{R}^{N}}\left(\left|\nabla u\right|^{2}+|x|^{2}|u|^{2}+|u|^{2}\right)dx},
\end{equation*}
and it is continuously embedded in $H^{1}(\mathbb{R}^{N})$. From \cite[Theorem 1.5]{RCIG} we have that if  initial data $u_{\epsilon, 0}$ belong to $\Sigma(\mathbb{R}^{N})$, then the solution $u_{\epsilon}(x,t)$ of Eq. \eqref{0NL} belong to $L^{\infty}_{\rm loc}(\mathbb{R}, \Sigma(\mathbb{R}^{N}))$. Moreover, if $u_{\epsilon, 0}\in H^{2}(\mathbb{R}^{N})$, then  $u_{\epsilon}(t)\in H^{2}(\mathbb{R}^{N})$ and $\partial_{t}u_{\epsilon}(t)\in L^{2}(\mathbb{R}^{N})$, for all $t\in \mathbb{R}$. 

Notice that the initial data $u_{\epsilon,0}$ in \eqref{ID} belong to $\Sigma(\mathbb{R}^{N})\cap H^{2}(\mathbb{R}^{N})$. On the other hand,  it is not hard to show that one has the following chain of continuous embeddings (see Lemma \ref{POii} below)
$$
\Sigma(\mathbb{R}^{N})\hookrightarrow W(\mathbb{R}^{N})\hookrightarrow H^{1}(\mathbb{R}^{N}). 
$$
In particular, since $E_{\epsilon}$ is of class $C^1$ on $W({\mathbb R}^{N})$, it follows that $E_{\epsilon}$ is of class $C^1$ on $\Sigma({\mathbb R}^{N})$.

Let $\omega\in \mathbb{R}$ and $\varphi\in {W}^{}({{\mathbb{R}^{N}}})$ be solution of the semilinear elliptic equation 
\begin{equation}\label{EP}
-\frac{1}{2}\Delta \varphi+\omega \varphi-\varphi\, \mathrm{Log}\left|\varphi \right|^{2}=0, \quad x\in {\mathbb{R}}^{N},
\end{equation}
It is well known that the Gausson 
$$
\phi_{\omega}(x):=e^{\frac{\omega-1}{2}}R(x), 
$$
where the function $R(x)$ is defined in \eqref{ID},  solves the problem \eqref{EP} for any dimension $N$. Furthermore, $\phi_{\omega}(x)$ is the unique, up to translations,  strictly positive $C^{2}$-solution for \eqref{EP} such that $\phi_{\omega}(x)\rightarrow 0$ as $\left|x\right|\rightarrow \infty$; see \cite[Theorem 1.2]{ADSQ1}. 

Orbital stability of Gaussons solutions $\phi_{\omega}(x)$ have been studied in various settings. More specifically,  Cazenave \cite{CL}; Cazenave and Lions \cite{CALO}; Ardila \cite{AHA1}; Blanchard and co. \cite{PHBJ, PHST}; research  the orbital stability of stationary solutions of \eqref{0NL}. 

As mentioned above,  the modulational stability property of ground states plays an important role in soliton dynamics; however, due to the singularity of the logarithm at the origin, {\em it is not clear whether the energy functional is of class $C^{2}$} in a tubular neighbourhood of the Gausson $R$.  In particular, it is an open problem to determine whether the Gausson $R$ satisfies the modulational stability estimates. 

Consider $H^{1}(\mathbb{R}^{N})$ equipped with the scaled norm  
$$
\|\phi\|^{}_{H_{\epsilon}^{1}}=\sqrt{\epsilon^{2-N}\|\nabla\phi\|^{2}_{L^{2}}
+\epsilon^{-N}\|\phi\|^{2}_{L^{2}}}. 
$$
The following result is obtained by using the only information that the minimizing sequences for the
constrained variational problem associated to \eqref{EP} are precompact in $W({\mathbb R}^{N})$.

\begin{theorem} \label{T1}
Let $u_{{\epsilon}}\in \Sigma(\mathbb{R}^{N})$ be the family  of solutions to the Cauchy problem \eqref{0NL} with initial data \eqref{ID}, for some $x_{0}$, $v_{0}\in \mathbb{R}^{N}$. Then there exist a positive constant $C$, independent of $\epsilon>0$, such that 
$$
\sup_{t\in \mathbb{R}}\|\nabla u_{{\epsilon}}(t)\|^{2}_{L^{2}}\leq C\epsilon^{N-2}. 
$$
Moreover, for any $\eta>0$ there exist $\epsilon>0$, a time $T^{\ast}_{\epsilon}>0$ and continuous functions 
$$
\theta_{\epsilon}: [0, T^{\ast}_{\epsilon}]\rightarrow [0, 2\pi],\quad y_{\epsilon}:[0, T^{\ast}_{\epsilon}]\rightarrow \mathbb{R}^{N},
$$ 
such that 
\begin{equation}\label{Ap}
u_{{\epsilon}}(x,t)=e^{\frac{i}{\epsilon}(\nu(t)\cdot x+\theta_{\epsilon}(t))}e^{\frac{1+N}{2}}e^{-\frac{1}{\epsilon^{2}}|{x-y_{\epsilon}(t)}|^{2}}+\omega_{\epsilon}(x,t),
\end{equation}
where $\|\omega_{\epsilon}(t)\|^{}_{H_{\epsilon}^{1}}<\eta^{2}$ for all $t\in [0, T^{\ast}_{\epsilon})$. 

Here $y_{\epsilon}=x(t)+\epsilon z_{\epsilon}(t)$ for some continuous function $z_{\epsilon}:[0, T^{\ast}_{\epsilon}]\rightarrow \mathbb{R}^{N}$, where $(x(t), \nu(t))$ is the solution of the classical Hamiltonian system \eqref{HS}.
\end{theorem}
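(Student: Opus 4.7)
The plan is to adapt the Bronski--Jerrard / Keraani soliton-dynamics strategy to the log-NLS setting, replacing the Weinstein-type modulational stability (unavailable here, as observed in the paragraph preceding Theorem~\ref{T1}) by the concentration-compactness characterization of $R$ as the unique minimizer of the variational problem associated with \eqref{EP}. For the gradient bound, inserting $u_{\epsilon,0}$ from \eqref{ID} into the energy and rescaling $y=(x-x_0)/\epsilon$ gives
\[
E_\epsilon(u_{\epsilon,0})=\tfrac{1}{2}|v_0|^2\|R\|_{L^2}^2+\tfrac{1}{2}\|\nabla R\|_{L^2}^2+V(x_0)\|R\|_{L^2}^2-\int R^2\log R^2\,dy+O(\epsilon),
\]
uniformly bounded in $\epsilon$. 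Proposition~\ref{PCS} propagates this bound in time; the logarithmic Sobolev inequality, the uniform bound on $V$ and conservation of mass then absorb the indefinite logarithmic term and yield $\sup_t\epsilon^{2-N}\|\nabla u_\epsilon(t)\|_{L^2}^2\le C$.

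For the profile statement, I introduce the shifted, phase-boosted unknown
\[
\psi_\epsilon(y,t):=e^{-\tfrac{i}{\epsilon}\bigl(\nu(t)\cdot(x(t)+\epsilon y)+\vartheta(t)\bigr)}\,u_\epsilon(x(t)+\epsilon y,t),\qquad\vartheta(t):=\int_0^t\bigl(\tfrac12|\nu(s)|^2-V(x(s))\bigr)ds,
\]
designed so that $\psi_\epsilon(y,0)=R(y)$ exactly and so that $\psi_\epsilon$ satisfies
\[
i\partial_t\psi_\epsilon+\tfrac12\Delta_y\psi_\epsilon+\psi_\epsilon\log|\psi_\epsilon|^2-W_\epsilon(y,t)\psi_\epsilon=0,
\]
with the quadratic remainder $W_\epsilon(y,t):=\epsilon^{-2}\bigl(V(x(t)+\epsilon y)-V(x(t))-\epsilon y\cdot\nabla V(x(t))\bigr)=O(|y|^2)$ on compact sets (this uses \eqref{HS}). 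Expressing $E_\epsilon(u_\epsilon)$ in terms of $\psi_\epsilon$ via the chain rule and combining (i) conservation of $E_\epsilon$ and $Q_\epsilon$, (ii) classical conservation of $\mathcal H$ along \eqref{HS}, (iii) Taylor expansion of $V$ around $x(t)$, and (iv) Ehrenfest-type identities forcing the moments $\int y|\psi_\epsilon|^2\,dy$ and $P(\psi_\epsilon):=\int\operatorname{Im}(\bar\psi_\epsilon\nabla\psi_\epsilon)\,dy$ to remain $o(1)$ (they vanish initially on $R$ by symmetry, and their ODEs inherit the smallness from $W_\epsilon$), a direct bookkeeping yields
\[
\mathcal F(\psi_\epsilon(t))\longrightarrow\mathcal F(R),\qquad\|\psi_\epsilon(t)\|_{L^2}\longrightarrow\|R\|_{L^2},\qquad\epsilon\to 0,
\]
uniformly on compact time intervals, where $\mathcal F(\psi):=\tfrac12\|\nabla\psi\|_{L^2}^2-\int|\psi|^2\log|\psi|^2\,dy$.

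Hence, for $\epsilon$ sufficiently small in terms of $\eta$, $\{\psi_\epsilon(t)\}$ is an asymptotically minimizing sequence at the Gausson level for the variational problem associated with \eqref{EP}. By the precompactness in $W(\mathbb R^N)$ of such sequences and by the uniqueness (up to translations) of the minimizer $R$, a continuous selection --- for instance by minimizing $(\gamma,z)\mapsto\|\psi_\epsilon(\cdot,t)-e^{i\gamma}R(\cdot-z)\|_{H^1}$ --- produces continuous $z_\epsilon:[0,T^\ast_\epsilon]\to\mathbb R^N$ and $\gamma_\epsilon:[0,T^\ast_\epsilon]\to[0,2\pi]$ such that $\|\psi_\epsilon(\cdot,t)-e^{i\gamma_\epsilon(t)}R(\cdot-z_\epsilon(t))\|_{H^1(\mathbb R^N)}<\eta^2$ on $[0,T^\ast_\epsilon]$, where $T^\ast_\epsilon>0$ is defined as the first exit from this $\eta^2$-tube (positive by continuity of $\psi_\epsilon$ in $W$ together with $\psi_\epsilon(\cdot,0)=R$). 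Reversing the change of variables produces \eqref{Ap} with $y_\epsilon(t)=x(t)+\epsilon z_\epsilon(t)$ and $\theta_\epsilon(t)$ a continuous lifting of $\vartheta(t)+\epsilon\gamma_\epsilon(t)$; the rescaling $y\leftrightarrow(x-x(t))/\epsilon$ converts the $H^1$ estimate into the advertised $H^1_\epsilon$ estimate.

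The principal obstacle, as emphasized in the introduction, is the absence of $C^2$-regularity for $E_\epsilon$ near $R$: without Weinstein coercivity only the qualitative concentration-compactness input is at our disposal, so the estimate cannot be propagated through a Gronwall-type inequality to an a~priori long time. Consequently the continuous selection of the modulation parameters must be carried out directly rather than through a spectral projection onto the generalized kernel of the linearized operator, and $T^\ast_\epsilon$ can only be introduced implicitly as the exit time from the tube rather than as a quantity prescribed in advance.
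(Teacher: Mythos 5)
Your overall architecture matches the paper's: the gradient bound via energy conservation, the logarithmic Sobolev inequality and mass conservation is exactly the paper's Lemma \ref{L1}, and the idea of replacing Weinstein's modulational estimates by the precompactness of minimizing sequences for the constrained problem (the paper's Lemma \ref{L10} and Proposition \ref{P1}) is the right one. The genuine gap is in the step where you pass from the conservation laws to the statement that $\mathcal E(\psi_\epsilon(t))$ is close to $\mathcal E(R)$. You propose to control the momentum $P(\psi_\epsilon)$ and the center of mass $\int y|\psi_\epsilon|^2\,dy$ through their Ehrenfest-type evolution equations, asserting that ``their ODEs inherit the smallness from $W_\epsilon$'' uniformly on compact time intervals. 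But $\tfrac{d}{dt}P(\psi_\epsilon)=-\int\nabla_yW_\epsilon\,|\psi_\epsilon|^2\,dy$ with $\nabla_yW_\epsilon(y,t)=\epsilon^{-1}\bigl(\nabla V(x(t)+\epsilon y)-\nabla V(x(t))\bigr)=\mathcal O(|y|)$, so bounding the right-hand side requires an a priori bound on a moment such as $\int|y|\,|\psi_\epsilon|^2\,dy$ --- equivalently, that the mass of $u_\epsilon$ stays concentrated at scale $\epsilon$ around $x(t)$ --- which is essentially the conclusion you are trying to reach; the $H^1$ tube in which you work before the exit time does not control such unbounded-weight moments. This Gronwall-type propagation is precisely what the paper can carry out only under the extra $C^2$ hypothesis, in Theorem \ref{T2}; your own closing paragraph concedes that such propagation is unavailable, which contradicts the claimed uniform convergence $\mathcal F(\psi_\epsilon(t))\to\mathcal F(R)$ on compact time intervals.

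The paper circumvents this entirely with a soft, non-dynamical device (Lemmas \ref{L8} and \ref{L9}): after reducing to $V\ge 0$, the conserved energy is rewritten in terms of $\psi_\epsilon$, the term $-\epsilon^{-N}\int(1-\chi)V|u_\epsilon|^2\,dx\le 0$ is simply discarded, and one obtains the one-sided algebraic bound $0\le\mathcal E(\psi_\epsilon(t))-\mathcal E(R)\le|\nu(t)||\sigma_\epsilon(t)|+|\lambda_\epsilon(t)|+\mathcal O(\epsilon^2)$, valid for all $t$ with no ODE analysis, where $\sigma_\epsilon,\lambda_\epsilon$ are continuous monitoring functions that are merely $\mathcal O(\epsilon^2)$ at $t=0$. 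The time $T^*_\epsilon$ is then defined in \eqref{Tim} as the exit time of $|\nu||\sigma_\epsilon|+|\lambda_\epsilon|$ from $[0,h/2]$, with $h$ given by Proposition \ref{P1}, rather than as the exit time from the $H^1$ tube; this keeps the conclusion non-tautological (your version of $T^*_\epsilon$ makes the tube estimate hold by definition) and is what permits the later upgrade in Theorem \ref{T2}. To repair your argument, replace the Ehrenfest step by this direct energy identity and define $T^*_\epsilon$ through the smallness of the monitoring functions rather than through the tube itself.
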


As it is well known, to prove the  modulational stability property of ground states,  it is necessary to study the spectral structure of the complex self-adjoint operator $\mathcal{E}^{\prime \prime}(R)$, where
\begin{equation}\label{Est}
\mathcal{E}(u)=\frac{1}{2}\int_{\mathbb{R}^{N}}\left|\nabla u\right|^{2}dx-\int_{\mathbb{R}^{N}}\left|u\right|^{2}\mbox{Log}\left|u\right|^{2}dx.
\end{equation}
Notice that $\mathcal{E}$ is of class $C^1$ on $\Sigma({\mathbb R}^{N})$. Since  $\mathcal{E}^{\prime \prime}(R)$ is a bounded operator defined on $\Sigma(\mathbb{R}^{N})$ with values in $\Sigma^{\prime}(\mathbb{R}^{N})$ (see Section \ref{S:2} for more details), it is natural to assume that the energy functional is of class $C^{2}$ in a neighbourhood $V_{\epsilon}(R)$ of $R$, of size $\epsilon>0$, where 
\begin{equation*}
V_{\epsilon}(R):=\left\{u\in \Sigma(\mathbb{R}^{N}): \|u-R\|^{2}_{H^{1}}<\epsilon\right\}.
\end{equation*}

\begin{remark}\rm
The proof that the functional $\mathcal{E}$  is smooth on $V_{\epsilon}(R)$ seems very difficult because of the technical 
complications related to the singularity of the logarithm at the origin.
\end{remark}

\begin{open}\rm
Prove or disprove that 	$\mathcal{E}$ is of class $C^2$ on $V_{\epsilon}(R)$.
\end{open}

\begin{proposition} \label{DP2}
Suppose that $\mathcal{E}$ is of class $C^{2}$ on $V_{\epsilon}(R)$, for any $\epsilon$ small enough. Then the modulational stability property holds. That is, there exist two constants $C>0$ and $h>0$, such that 
\begin{equation*}
\inf_{y\in {\mathbb R}^{N}, \theta\in {\mathbb R}}\|\phi-e^{i\,\theta}R(\cdot-y)\|^{2}_{H^{1}}\leq C(\mathcal{E}(\phi)-\mathcal{E}(R))
\end{equation*}
for all $\phi\in \Sigma(\mathbb{R}^{N})$, such that $\|\phi\|^{2}_{L^{2}}=\|R\|^{2}_{L^{2}}$ and $\mathcal{E}(\phi)-\mathcal{E}(R)<h$.
\end{proposition}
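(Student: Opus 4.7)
The plan is to adapt Weinstein's coercivity argument \cite{ELW, MN5} to the present setting, now feasible under the standing $C^2$ hypothesis. Introduce the action functional $S(u) := \mathcal{E}(u) + 2 Q(u)$ with $Q(u) := \int_{\mathbb{R}^N} |u|^2\,dx$; then $S$ is $C^2$ on $V_\epsilon(R)$, and a direct calculation using the identity $-\tfrac12\Delta R + R - R\,\mathrm{Log}|R|^2 = 0$ gives $\mathcal{E}'(R) = -4R$ and hence $S'(R) = 0$. Because $Q(\phi) = Q(R)$, the identity $\mathcal{E}(\phi)-\mathcal{E}(R) = S(\phi)-S(R)$ reduces the inequality to a coercivity estimate on the Hessian of $S$ at $R$.

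Set $w := e^{-i\theta_*}\phi(\cdot + y_*) - R$, where $(\theta_*, y_*)$ minimise $\|\phi - e^{i\theta}R(\cdot - y)\|_{H^1}^2$. The compactness up to translations of minimising sequences for $\min\{\mathcal{E}(u):Q(u)=Q(R)\}$---the same ingredient behind the orbital stability of the Gausson \cite{CL, CALO, AHA1}---implies that, for $h$ small enough, $\phi$ is $H^1$-close to the orbit $\{e^{i\theta}R(\cdot - y)\}$, so that the implicit function theorem supplies well-defined modulation parameters. By translation- and phase-invariance, $\mathcal{E}(\phi) - \mathcal{E}(R) = S(R+w) - S(R)$ and $\|w\|_{H^1}^2 = \inf_{\theta,y}\|\phi - e^{i\theta}R(\cdot - y)\|_{H^1}^2$; the minimality yields the orthogonality conditions $\mathrm{Im}(w,R)_{L^2} = 0$ and $\mathrm{Re}(w, \partial_{x_j}R)_{L^2} = 0$ for $j=1,\ldots,N$.

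Writing $w = v + i\eta$ with $v,\eta$ real, and using $\mathrm{Log}|R|^2 = N+1 - 2|x|^2$, a direct computation gives
\begin{equation*}
\langle S''(R) w, w\rangle = \langle L_+ v, v\rangle + \langle L_- \eta, \eta\rangle, \quad L_+ := -\Delta + 4|x|^2 - 2(N+2), \quad L_- := -\Delta + 4|x|^2 - 2N,
\end{equation*}
so that $L_{\pm}$ are shifted $N$-dimensional isotropic harmonic oscillators with explicit spectra: $\sigma(L_-) = \{4k: k\in \mathbb{N}_0\}$ with $\ker L_- = \mathrm{span}\{R\}$ (the phase-orbit tangent), and $\sigma(L_+) = \{-4,0,4,8,\ldots\}$ with the $-4$-eigenspace $\mathrm{span}\{R\}$ and $\ker L_+ = \mathrm{span}\{\partial_{x_j}R : 1 \leq j \leq N\}$ (the translation-orbit tangent). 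The orthogonality conditions above kill the kernels of $L_\pm$, while the quadratic constraint $2\,\mathrm{Re}(w,R)_{L^2} + \|w\|^2_{L^2} = 0$ arising from $Q(\phi)=Q(R)$ neutralises the negative direction of $L_+$ up to an $O(\|w\|_{L^2}^2)$ correction. This yields $\langle S''(R)w,w\rangle \geq c\|w\|_{L^2}^2$, which upgrades to $\langle S''(R)w,w\rangle \geq c'\|w\|_{H^1}^2$ via the standard splitting trick (write $\langle S''(R)w,w\rangle = \tfrac12\langle S''(R)w,w\rangle + \tfrac12\langle S''(R)w,w\rangle$, use the $L^2$ lower bound on one copy and the kinetic term present in the other). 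Combining with the Taylor expansion $S(R+w)-S(R) = \tfrac12\langle S''(R)w,w\rangle + o(\|w\|_{H^1}^2)$ supplied by the $C^2$ hypothesis, and absorbing the remainder for $h$ small, the desired inequality follows.

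The principal obstacle has been displaced entirely into the $C^2$ hypothesis (the Open Problem above); once it is granted, the only technical checks left are that the implicit function step for $(\theta_*,y_*)$ is valid in the $H^1$-metric and that the explicit spectral gap of $L_\pm$ survives the combined orthogonality/mass constraint, both of which are standard.
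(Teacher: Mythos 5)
Your proposal is correct in substance and follows essentially the same route as the paper: both reduce the claim to coercivity of the Hessian of the action $S=\tfrac12\mathcal{E}+Q$ at $R$ (yours is twice this, which is immaterial) on the subspace cut out by the phase/translation orthogonality conditions together with the mass constraint, split $S''(R)$ into the shifted harmonic-oscillator operators $L_{\pm}$ whose explicit spectra identify the kernel and the single negative direction, and close with a Taylor expansion (licensed by the $C^2$ hypothesis) plus the concentration-compactness input (Proposition \ref{P1}) guaranteeing that small energy excess forces $\phi$ into the tube. One small slip to repair: you obtain $(\theta_*,y_*)$ by minimising the $H^1$ distance but then assert the $L^2$ orthogonality conditions, which instead follow from minimising the $L^2$ distance as the paper does in Lemma \ref{L20} and \eqref{IG}.
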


In light of Proposition \ref{DP2}, the soliton dynamics in Theorem \ref{T1} can be improved. Indeed, we have the following result.

\begin{theorem} \label{T2}
Let $u_{{\epsilon}}\in \Sigma(\mathbb{R}^{N})$ be the family  of solutions to the Cauchy problem \eqref{0NL} with initial data \eqref{ID}. 
Furthermore, let $(x(t), \nu(t))$ be the solution of the Hamiltonian system \eqref{HS}. Under the hypothesis of Proposition \ref{DP2}, there exist  
$\theta_{\epsilon}:\mathbb{R}^{+}\rightarrow [0, 2\pi]$ such that,
\begin{equation*}
u_{{\epsilon}}(x,t)=e^{\frac{i}{\epsilon}(\nu(t)\cdot x+\theta_{\epsilon}(t))}e^{\frac{1+N}{2}}e^{-\frac{1}{\epsilon^{2}}|{x-x(t)}|^{2}}+\omega_{\epsilon}(x,t),
\end{equation*}
locally uniformly in time $t\in \mathbb{R}$, where $\omega_{\epsilon}\in H_{\epsilon}^{1}$ and $\|\omega_{\epsilon}(t)\|^{}_{H_{\epsilon}^{1}}=\mathcal{O}(\epsilon)$, as $\epsilon\rightarrow 0$.
\end{theorem}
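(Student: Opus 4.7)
The strategy is to work in the frame co-moving with the Hamiltonian trajectory $(x(t),\nu(t))$ and combine the modulational stability estimate of Proposition~\ref{DP2} with Ehrenfest-type identities for the first moments of the demodulated profile.

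I begin by introducing the rescaled, boost-removed function
\begin{equation*}
v_\epsilon(y,t):=u_\epsilon\bigl(\epsilon y+x(t),t\bigr)\exp\!\bigl(-\tfrac{i}{\epsilon}\nu(t)\cdot(\epsilon y+x(t))\bigr),
\end{equation*}
so that $v_\epsilon(\cdot,0)=R$ and, by mass conservation, $\|v_\epsilon(t)\|_{L^2}=\|R\|_{L^2}$ for every $t$. Substituting $u_\epsilon$ in $E_\epsilon$ and Taylor-expanding $V$ around $x(t)$ to second order yields
\begin{equation}\label{eq:energyid}
E_\epsilon(u_\epsilon(t))=\mathcal{E}(v_\epsilon(t))+\mathcal{H}(t)\|R\|_{L^2}^2+\nu(t)\cdot P(v_\epsilon(t))+\epsilon\,\nabla V(x(t))\cdot X(v_\epsilon(t))+O(\epsilon^2),
\end{equation}
with $P(v):=\mathrm{Im}\!\int\bar v\,\nabla v\,dy$ and $X(v):=\!\int y\,|v|^2\,dy$. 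Evaluating \eqref{eq:energyid} at $t=0$ (where $P(R)=X(R)=0$) and exploiting the conservation of both $E_\epsilon$ and $\mathcal{H}$ to cancel the constant-in-$t$ terms gives
\begin{equation*}
\mathcal{E}(v_\epsilon(t))-\mathcal{E}(R)=-\nu(t)\cdot P(v_\epsilon(t))-\epsilon\,\nabla V(x(t))\cdot X(v_\epsilon(t))+O(\epsilon^2).
\end{equation*}

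To control the right-hand side I use the Ehrenfest identities for \eqref{0NL}: since the logarithmic nonlinearity has the form $f(|u|^2)u$, it does not enter the momentum balance, and after changing variables and Taylor-expanding $\nabla V$ at $x(t)$ to first order these identities reduce to the linear system
\begin{equation*}
\epsilon\,\dot X(v_\epsilon)=P(v_\epsilon),\qquad \dot P(v_\epsilon)=-\epsilon\,D^2V(x(t))\,X(v_\epsilon)+O(\epsilon^2),
\end{equation*}
with vanishing initial data. A Gr\"onwall argument on the resulting second-order equation $\ddot X=-D^2V(x(t))X+O(\epsilon)$ produces $|X(v_\epsilon(t))|=O(\epsilon)$ and $|P(v_\epsilon(t))|=O(\epsilon^2)$ locally uniformly in time, so that $\mathcal{E}(v_\epsilon(t))-\mathcal{E}(R)=O(\epsilon^2)$.

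Since this excess stays below the threshold $h$ of Proposition~\ref{DP2} for $\epsilon$ small, the latter supplies continuous functions $\theta_\epsilon(t)\in[0,2\pi]$ and $z_\epsilon(t)\in\mathbb{R}^N$ with
\begin{equation*}
\|v_\epsilon(t)-e^{i\theta_\epsilon(t)}R(\cdot-z_\epsilon(t))\|_{H^1}^2\le C\bigl(\mathcal{E}(v_\epsilon(t))-\mathcal{E}(R)\bigr)=O(\epsilon^2).
\end{equation*}
A Cauchy-Schwarz comparison of centres of mass, justified by the Gaussian decay of $R$ and standard moment bounds on $v_\epsilon$ inherited from the $\Sigma$-regularity of $u_\epsilon$, gives $X(v_\epsilon)=z_\epsilon\|R\|_{L^2}^2+O(\epsilon)$; together with $|X(v_\epsilon)|=O(\epsilon)$ this forces $|z_\epsilon(t)|=O(\epsilon)$, whence $\|R(\cdot-z_\epsilon)-R\|_{H^1}=O(\epsilon)$ by Taylor expansion and $\|v_\epsilon(t)-e^{i\theta_\epsilon(t)}R\|_{H^1}=O(\epsilon)$. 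Reverting the change of variables, which bounds the $H^1_\epsilon$-norm in $x$ by the $H^1$-norm in $y$ up to constants depending only on $|\nu(t)|$, yields the decomposition of Theorem~\ref{T2} with $\|\omega_\epsilon(t)\|_{H^1_\epsilon}=O(\epsilon)$. The main obstacle I anticipate is the continuation/bootstrap step needed to keep the energy excess below $h$ on an $\epsilon$-independent interval, together with the rigorous justification of the Ehrenfest identities at the limited regularity at hand; an approximation argument from $H^2\cap\Sigma$ initial data, as in the Cauchy theory recalled in the introduction, should handle the latter.
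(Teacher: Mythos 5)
Your strategy is the same Bronski--Jerrard/Keraani scheme that the paper follows: your $v_\epsilon$ is exactly the demodulated profile $\psi_\epsilon$ of \eqref{af}, your $P(v_\epsilon)$ coincides with the paper's $\sigma_\epsilon(t)$, your $\epsilon X(v_\epsilon)$ is (up to sign and the cutoff $\chi$) the paper's $\gamma_\epsilon(t)$, your energy identity is the content of Lemmas \ref{L4} and \ref{L9}, your Ehrenfest identities are \eqref{Id1}--\eqref{Id2}, and the Gronwall step followed by Proposition \ref{DP2} mirrors Proposition \ref{PA12} and the paper's proof of Theorem \ref{T2}. So the route is not genuinely different; the question is whether your error terms are justified.

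There is a genuine gap there. Every $\mathcal{O}(\epsilon^2)$ you write --- in the Taylor expansion of $\frac{1}{\epsilon^N}\int V|u_\epsilon|^2$, in the momentum balance $\dot P=-\epsilon D^2V(x(t))X+\mathcal{O}(\epsilon^2)$, and in the centre-of-mass comparison $X(v_\epsilon)=z_\epsilon\|R\|_{L^2}^2+\mathcal{O}(\epsilon)$ --- is of the form $\epsilon^2\int_{\mathbb{R}^N}|y|^2|v_\epsilon(y,t)|^2\,dy$ (or involves $\int |y|\,|w|^2\,dy$ for the $H^1$-small remainder $w$), so it is $\mathcal{O}(\epsilon^2)$ only if the second moment of $v_\epsilon$ is bounded uniformly in $\epsilon$ and $t$. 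That bound is not ``inherited from the $\Sigma$-regularity of $u_\epsilon$'': in the original variables it reads $\epsilon^{-N-2}\int|x-x(t)|^2|u_\epsilon(x,t)|^2\,dx\leq C$, i.e.\ it is itself a concentration statement of exactly the strength you are trying to prove, and the $\Sigma$-bounds from \cite{RCIG} are not uniform in $\epsilon$. Proposition \ref{DP2} does not rescue you either, because the remainder it produces is small only in $H^1$, which controls no spatial moment. This is precisely why the paper localizes with the cutoff $\chi$ of \eqref{cha} (so that $V\chi$ and $x\chi$ are bounded $C^2$ functions), works with the $(C^{2})^{\ast}$ distance to Dirac masses (Lemmas \ref{L23}, \ref{L26}, \ref{L29}), and bounds Taylor remainders by $|f_\sharp|+|f_\sharp|^2$ tested against the mass density rather than by the unbounded weight $|y|^2$. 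Relatedly, the continuation step you defer to the end is not cosmetic: the whole chain of estimates is valid only on the stopping interval $[0,T^{\ast}_{\epsilon})$ of \eqref{Tim} on which the energy excess stays below $h$, and one must close the Gronwall inequality on that interval first and then conclude $T^{\ast}_{\epsilon}=T_{0}$ by continuity before any of the bounds become locally uniform in time. To repair your argument you would either have to reproduce the paper's cutoff/duality machinery, or prove a uniform a priori bound on $\int|y|^2|v_\epsilon|^2\,dy$ by a separate bootstrap (for instance a virial-type variance identity closed together with the Gronwall loop).
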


The  paper is organized as follows.  

In Section \ref{S:0} we provide,  by variational techniques,  a characterization of the Gausson $R$.

In Section \ref{S:1} we prove Theorem \ref{T1}. 

In Section \ref{S:2}, we show some delicate estimates for 
$\mathcal{E}^{\prime \prime}(R)$ (Proposition \ref{DP2}). 

Finally, in Section \ref{S:3} we give a sketch of proof of Theorem  \ref{T2}. 
\vskip6pt
\noindent
{\bf Notation.} $\left\langle \cdot , \cdot \right\rangle$ is the duality pairing between $B^{\prime}$ and $B$, where $B$ is a Banach space and $B^{\prime}$ is its dual. The space $L^{2}(\mathbb{R}^{N},\mathbb{C})$  will be denoted  by $L^{2}(\mathbb{R}^{N})$ and its norm by $\|\cdot\|_{L^{2}}$.  This space will be endowed  with the real scalar product
\begin{equation*}
 \left(u,v\right)_{L^{2}}=\mbox{Re}\int_{\mathbb{R}^{N}}u\overline{v}\, dx, \quad
 \text{for $u,v\in L^{2}\left(\mathbb{R}^{N}\right)$}.
\end{equation*}
We denote by $\|\cdot\|_{H^{1}}$ the $H_{\mathbb{C}}^{1}(\mathbb{R}^{N})$-norm. If $L$ is a linear operator acting on some space $\left\langle Lv,v\right\rangle$ denotes the value of the quadratic form associated with $L$ evaluated at $v$. Finally, throughout this paper, the letter $C$ will denote positive constants whose value may change form line to line.

\section{Variational analysis}
\label{S:0}

In this section we establish some results that will be used later in the paper. In particular, we provide a characterization of the Gausson $R$ as minimizer of the energy functional $\mathcal{E}$  among functions with the same mass. 

We first need to introduce some notation which facilitates the subsequent discussion. 
Following \cite{CL},  we define the functions  $\Phi$, $\Psi$ on $\left[0, \infty\right)$  by 
\begin{equation}\label{IFD}
\Phi(s):=
\begin{cases}
-s^{2}\,\mbox{Log}(s^{2}), &\text{if $0\leq s\leq e^{-3}$;}\\
3s^{2}+4e^{-3}s^{}-e^{-6}, &\text{if $ s\geq e^{-3}$;}
\end{cases}
\quad  \Psi(s):=F(s)+\Phi(s),
\end{equation}
where
\begin{equation*}
F(s):=s^{2}\mbox{Log}\, s^{2} \quad  \text{for all  $s\in\mathbb{R}$}.
\end{equation*}
Notice that $\Phi$ is a  Young function (see Lemma 1.3 in \cite{CL}). Then  we define the associate Orlicz space by considering
\begin{equation*}
L^{\Phi}(\mathbb{R}^{N}):=\left\{f\in L^{1}_{\rm loc}(\mathbb{R}^{N}) : \Phi(\left|f\right|)\in L^{1}_{}(\mathbb{R}^{N})\right\}, 
\end{equation*}
and  we endowed this space with Luxemburg norm
\begin{equation*}
\left\|f\right\|_{L^{\Phi}}={\inf}\left\{k>0: \int_{\mathbb{R}^{N}}\Phi\left(k^{-1}{\left|f(x)\right|}\right)dx\leq 1 \right\}.
\end{equation*}
In light of \cite[Lemma 2.1]{CL}, we have that  $\left(L^{\Phi}(\mathbb{R}^{N}),\|\cdot\|_{L^{\Phi}} \right)$ is a separable reflexive Banach space. Finally, we define the Banach space $(W(\mathbb{R}^{N}),\|\cdot\|^{}_{W({\mathbb{R}^{N}})})=
(H^{1}_{\mathbb{C}}(\mathbb{R}^{N})\cap L^{\Phi}(\mathbb{R}^{N}),\|\cdot\|^{}_{W({\mathbb{R}^{N}})})$ where 
$$
\|f\|^{}_{W({\mathbb{R}^{N}})}:=\|f\|^{}_{H^{1}({\mathbb{R}^{N}})}+\|f\|^{}_{L^{\Phi}}.
\,\,\quad \text{for all $f\in W({\mathbb R}^N)$.}
$$ 
Since $\mathcal{E}\in {C}^{1}(W({\mathbb{R}^{N}}), \mathbb{R})$ (see Proposition 2.7 in \cite{CL}), it follows from Lemma \ref{POii} (ii) below that  $\mathcal{E}\in {C}^{1}(\Sigma({\mathbb{R}^{N}}), \mathbb{R})$.

\begin{lemma} \label{POii}
The following assertions hold.\\
{\rm (i)} The embedding $\Sigma(\mathbb{R}^{N})\hookrightarrow L^{q}(\mathbb{R}^{N})$ is compact, where $2\leq q< 2^{\ast}$.\\
{\rm (ii)} The inclusion map $\Sigma(\mathbb{R}^{N})\hookrightarrow L^{2-\delta}(\mathbb{R}^{N})$  is continuous, where $\delta={1}/{N}$. In particular, the embedding $\Sigma(\mathbb{R}^{N})\hookrightarrow W(\mathbb{R}^{N})$  is continuous.
\end{lemma}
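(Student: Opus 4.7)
My plan is to treat (i) and (ii) independently: (i) is a classical tightness-plus-Rellich argument powered by the weight $|x|^2$, while (ii) combines a weighted Hölder inequality with a pointwise comparison between the Young function $\Phi$ and a sum of two pure powers of $s$.

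For (i), I would take a bounded sequence $(u_n)\subset \Sigma(\mathbb{R}^N)$ and exploit the uniform tail estimate
$$\int_{|x|>R}|u_n|^2\,dx\leq \frac{1}{R^2}\int_{\mathbb{R}^N}|x|^2|u_n|^2\,dx\leq \frac{C}{R^2}.$$
Together with the Sobolev bound on $\|u_n\|_{L^{2^\ast}}$ (from $H^1\hookrightarrow L^{2^\ast}$) and the interpolation $\|u_n\|_{L^q(|x|>R)}\leq \|u_n\|_{L^2(|x|>R)}^{\theta}\|u_n\|_{L^{2^\ast}(|x|>R)}^{1-\theta}$, this yields uniform $L^q$-smallness of the tail for every $2\leq q<2^\ast$. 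Combining with a diagonal extraction along the compact embeddings $H^1(B_R)\hookrightarrow L^q(B_R)$ (Rellich--Kondrachov) on expanding balls and then sending $R\to\infty$, I obtain a subsequence converging in $L^q(\mathbb{R}^N)$.

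For (ii), I would split $1=(1+|x|)^{2-\delta}(1+|x|)^{-(2-\delta)}$ inside the integrand and apply Hölder with the conjugate pair $p=2/(2-\delta)$, $p'=2/\delta$ to obtain
$$\int_{\mathbb{R}^N}|u|^{2-\delta}\,dx\leq \left(\int_{\mathbb{R}^N} |u|^2(1+|x|)^2\,dx\right)^{\frac{2-\delta}{2}}\left(\int_{\mathbb{R}^N} (1+|x|)^{-\frac{2(2-\delta)}{\delta}}\,dx\right)^{\frac{\delta}{2}}.$$
Since $(1+|x|)^{2}\leq 2(1+|x|^{2})$, the first factor is bounded by $C\|u\|_{\Sigma}^{2-\delta}$, while the second is finite precisely when $2(2-\delta)/\delta>N$, equivalently $\delta<4/(N+2)$. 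For $\delta=1/N$ this reduces to $N\geq 1$, giving the first inclusion.

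For the chain $\Sigma\hookrightarrow W$, it suffices to prove $\Sigma\hookrightarrow L^\Phi$. I would establish the pointwise bound $\Phi(s)\leq C(s^2+s^{2-\delta})$ for every $s\geq 0$: if $s\geq e^{-3}$ the polynomial piece in \eqref{IFD} is dominated by $7s^2$ (using $s\leq e^{3}s^2$), whereas if $0<s\leq e^{-3}$ one has $2s^2\log(1/s)\leq C s^{2-\delta}$ because $s^{\delta}\log(1/s)\to 0$ as $s\to 0^+$. Integrating yields $\int_{\mathbb{R}^N}\Phi(|u|)\,dx\leq C(\|u\|^{2}_{L^{2}}+\|u\|^{2-\delta}_{L^{2-\delta}})$, and standard Orlicz-space theory (since $\Phi$ is a Young function with $\Delta_2$-growth at infinity) converts this modular bound into control of the Luxemburg norm $\|u\|_{L^\Phi}$ by a continuous function of $\|u\|_{\Sigma}$, finishing the proof. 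The main, rather mild, technical point is that the choice $\delta=1/N$ is exactly what makes the decay integral $\int(1+|x|)^{-2(2-\delta)/\delta}\,dx$ converge in every dimension $N\geq 1$.
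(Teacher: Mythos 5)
Your proof is correct. Parts (i) and the first half of (ii) essentially reproduce what the paper does: the paper simply cites Zhang for the compact embedding (your tightness-plus-Rellich argument is the standard proof of that citation), and your H\"older splitting with exponents $2/(2-\delta)$, $2/\delta$ is literally the paper's computation with conjugate pair $2N/(2N-1)$, $2N$ and weight $(1+|x|^2)^{(2N-1)/(2N)}$, including the same integrability condition on the decaying factor. Where you genuinely diverge is the embedding $\Sigma(\mathbb{R}^{N})\hookrightarrow L^{\Phi}(\mathbb{R}^{N})$. The paper does not use a pure power bound on $\Phi$: it writes $\Phi(|z|)\leq C(|z|^{2+1/N}+|z|^{2-1/N})+\Psi(|z|)$ and controls the $\Psi$-term through Cazenave's Lipschitz-type estimate \eqref{DB}, then deduces $\int\Phi(|u_{n}|)\,dx\to0$ along sequences converging to $0$ in $\Sigma(\mathbb{R}^{N})$ and invokes \cite[Lemma 2.1]{CL} to pass from the modular to the Luxemburg norm. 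You instead observe the cleaner pointwise bound $\Phi(s)\leq C(s^{2}+s^{2-\delta})$ (which does hold, since $s^{\delta}\log(1/s)$ is bounded on $(0,e^{-3}]$ and the polynomial branch is dominated by $7s^{2}$), obtaining the modular estimate directly from the $L^{2}$ and $L^{2-\delta}$ norms alone, and then use the $\Delta_{2}$ property of $\Phi$ to convert it into norm control. Your route is more self-contained and avoids both the auxiliary function $\Psi$ and the $L^{2+1/N}$ embedding; the paper's route is shorter on the page only because it outsources the analytic work to \cite{CL}. The one point you should make explicit is that $\Phi$ satisfies $\Delta_{2}$ globally (e.g.\ $\Phi(2s)=4\Phi(s)-8s^{2}\log 2\leq 4\Phi(s)$ on the logarithmic branch, and $\Phi$ is comparable to $s^{2}$ on the polynomial branch), since that is exactly what justifies the modular-to-norm conversion near zero.
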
 
\begin{proof} 
Statement (i) is proved in \cite[Lemma 3.1]{JZ2000}. Next let $u\in \Sigma(\mathbb{R}^{N})$. By  H\"{o}lder's inequality with conjugate exponents $2N/(2N-1)$, $2N$ we obtain 
\begin{equation*}
\int_{\mathbb{R}^{N}}|u(x)|^{2-\frac{1}{N}}dx\leq \left(\int_{\mathbb{R}^{N}}\frac{1}{(1+|x|^{2})^{\alpha}}dx\right)^{\frac{1}{2N}}\left(\int_{\mathbb{R}^{N}}(1+|x|^{2})|u(x)|^{2}dx\right)^{\frac{2N-1}{2N}},
\end{equation*}
where $\alpha=2N-1$. Since $\alpha>N/2$, it follows that there exists a constant $C>0$ depending only on $N$ such that $\|u\|_{L^{2-{1}/{N}}(\mathbb{R}^{N})}\leq C\|u\|_{\Sigma(\mathbb{R}^{N})}$; that is, the embedding 
\begin{equation}\label{qw}
\Sigma(\mathbb{R}^{N})\hookrightarrow L^{2-\delta}(\mathbb{R}^{N})
\end{equation}
is continuous. On the other hand, by \cite[Proposition 2.2]{CL}, we see that 
\begin{equation}\label{DB}
\int_{\mathbb{R}^{N}}\left|\Psi(\left|u\right|)- \Psi(\left|v\right|)\right|dx\leq C\left(1+ \left\|u\right\|^{2}_{H^{1}(\mathbb{R}^{N})}+ \left\|v\right\|^{2}_{H^{1}(\mathbb{R}^{N})} \right)\left\|u-v\right\|_{{L^{2}}}
\end{equation}
for all $u$, $v\in H^{1}(\mathbb{R}^{N})$. Moreover,  it follows from \eqref{IFD}  that  for every  $N\in \mathbb{N}$, there exists  $C>0$ depending only on $N$ such that
\begin{equation*}
\Phi(|z|)\leq C(|z|^{2+\frac{1}{N}}+|z|^{2-\frac{1}{N}})+\Psi(|z|) \quad \text{for any $z\in \mathbb{C}$}.
\end{equation*}
Since $2 <2+{1}/{N}< 2^{\ast}$, it follows from  statement (i), \eqref{qw}, and \eqref{DB}  that if $u_{n}\rightarrow 0$  strongly in $\Sigma(\mathbb{R}^{N})$, then $\int_{\mathbb{R}^{N}}\Phi(|u_{n}|)dx\rightarrow 0$ as $n$ goes to $+\infty$.  By  \cite[inequality (2.2) in Lemma 2.1]{CL}, we see that  $u_{n}\rightarrow 0$  strongly  in $L^{\Phi}(\mathbb{R}^{N})$. Therefore, the embedding $\Sigma(\mathbb{R}^{N})\hookrightarrow L^{\Phi}(\mathbb{R}^{N})$ is continuous. This concludes the proof.
\end{proof}

\vskip4pt
\noindent
We have the following.

\begin{proposition} \label{P1}
For every $\eta>0$, there exist $h>0$ such that, if $\phi\in \Sigma(\mathbb{R}^{N})$, $\|\phi\|^{2}_{L^{2}}=\|R\|^{2}_{L^{2}}$ and 
$\mathcal{E}(\phi)-\mathcal{E}(R)<h$, then
\begin{equation*}
\inf_{y\in {\mathbb R}^{N},\,\theta\in {\mathbb R}}\|\phi-e^{i\,\theta}R(\cdot-y)\|^{2}_{H^{1}}<\eta^{2}.
\end{equation*}
\end{proposition}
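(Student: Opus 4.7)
The plan is to argue by contradiction, using two ingredients that the introduction promises to establish in Section~\ref{S:0}: (a) $R$ is, up to translations and a phase, the unique minimizer of $\mathcal{E}$ on the constraint $\{u\in W(\mathbb{R}^N):\|u\|_{L^2}^2=\|R\|_{L^2}^2\}$, and (b) every minimizing sequence of this constrained problem is precompact in $W(\mathbb{R}^N)$ modulo translations and phases. The paper explicitly flags (b) as the only compactness input that its main results will use.

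Suppose the conclusion of Proposition~\ref{P1} fails. Then there exist $\eta>0$ and $\{\phi_n\}\subset\Sigma(\mathbb{R}^N)$ with
$$
\|\phi_n\|_{L^2}^2=\|R\|_{L^2}^2,\qquad \mathcal{E}(\phi_n)-\mathcal{E}(R)\to 0,
$$
while
$$
\inf_{y\in\mathbb{R}^N,\,\theta\in\mathbb{R}}\|\phi_n-e^{i\theta}R(\cdot-y)\|_{H^1}^2\geq \eta^2 \quad\text{for every } n.
$$
By (a), the sequence $\{\phi_n\}$, viewed inside $W(\mathbb{R}^N)$ via the continuous embedding $\Sigma(\mathbb{R}^N)\hookrightarrow W(\mathbb{R}^N)$ of Lemma~\ref{POii}, is a minimizing sequence for the constrained variational problem. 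Applying (b), after extracting a subsequence there exist phases $\theta_n\in\mathbb{R}$ and translations $y_n\in\mathbb{R}^N$ such that
$$
e^{-i\theta_n}\phi_n(\cdot+y_n)\longrightarrow R \quad\text{strongly in } W(\mathbb{R}^N).
$$
Since $W(\mathbb{R}^N)\hookrightarrow H^1(\mathbb{R}^N)$ continuously, the convergence persists in $H^1(\mathbb{R}^N)$; equivalently, $\|\phi_n-e^{i\theta_n}R(\cdot-y_n)\|_{H^1}\to 0$, contradicting the standing lower bound and proving the proposition.

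The main obstacle sits entirely inside ingredient (b), the compactness of minimizing sequences in $W(\mathbb{R}^N)$. The difficulty is that the logarithmic nonlinearity is sign-indefinite and is controlled only through the Orlicz space $L^\Phi(\mathbb{R}^N)$ via the splitting $F=\Psi-\Phi$, so one must run a concentration--compactness argument in a non-Hilbert framework. Vanishing is excluded because the integral $\int|u_n|^2\mathrm{Log}|u_n|^2\,dx$ cannot approach $\mathcal{E}(R)-\tfrac12\|\nabla u_n\|_{L^2}^2$ from a sequence that spreads to zero, while dichotomy is ruled out by the strict subadditivity of the constrained infimum under mass splitting, a feature that stems from the scaling behaviour of the logarithmic nonlinearity. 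Granting these delicate but now-classical facts (cf.~\cite{CL,CALO,AHA1}), Proposition~\ref{P1} reduces to the contradiction argument above.
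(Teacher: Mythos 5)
Your top-level argument coincides with the paper's proof of Proposition \ref{P1}: assume a violating sequence $\{\phi_n\}$, note that the hypotheses force $\mathcal{E}(\phi_n)\to\mathcal{E}(R)=\ell_R$ so that $\{\phi_n\}$ is a minimizing sequence for the mass-constrained problem, invoke precompactness modulo translations and phases, and contradict the uniform lower bound through the continuous embedding $W(\mathbb{R}^N)\hookrightarrow H^1(\mathbb{R}^N)$. Where you genuinely diverge is in how the compactness ingredient (your (b), the paper's Lemma \ref{L10}) is to be obtained. You propose running concentration--compactness directly on the $L^2$-sphere in the Orlicz framework, ruling out vanishing and dichotomy by strict subadditivity; the paper avoids this entirely by exploiting the identity $S(u)=\tfrac12 I(u)+\tfrac12\|u\|_{L^2}^2$: Lemma \ref{L12} shows that $I(u)\ge 0$ on the constraint sphere and that the infimum there equals $S(R)$, a minimizing sequence is then rescaled by $\lambda_n\to1$ so that $I(\lambda_n u_n)=0$, and the already-established compactness for the Nehari-constrained problem $d$ (Theorem \ref{AT}, quoted from \cite{AHA1}) finishes the job. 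Both routes ultimately defer the hard compactness to prior literature, but the paper's reduction is short and identifies the limit as the Gausson at no extra cost, whereas your concentration--compactness sketch leaves real work undone: the exclusion of vanishing and the strict subadditivity (which does hold, via $\mathcal{E}(\lambda u)=\lambda^2\mathcal{E}(u)-\lambda^2\,\mathrm{Log}(\lambda^2)\|u\|_{L^2}^2$) are asserted rather than verified, so as written your (a) and (b) remain assumptions supported only by citation. Granting them, the deduction of Proposition \ref{P1} is correct and identical to the paper's.
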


\noindent
Our next goal is to prove Proposition \ref{P1}. In this aim, we study the constrained problem
\begin{equation}\label{MP}
\ell_{R}:=\inf\left\{\mathcal{E}(u): u\in {W}(\mathbb{R}^{N}),\,\, \|u\|^{2}_{L^{2}}=\|R\|^{2}_{L^{2}}\right\},
\end{equation}
where $R$ is the Gausson defined in \eqref{ID}.

\begin{lemma} \label{L10}
Every minimizing sequence of $\ell_{R}$ is relativity compact in ${W}(\mathbb{R}^{N})$. More precisely, if a sequence  $\left\{ u_{n}\right\}\subset {W}(\mathbb{R}^{N})$ is such that $\|u_{n}\|^{2}_{L^{2}}=\|R\|^{2}_{L^{2}}$ and $\mathcal{E}(u_{n})\rightarrow \ell_{R}$ as 
$n\to\infty$, then there exist a family $\left\{y_{n}\right\}_{n\in\mathbb N}\subset \mathbb{R}^{N}$ and $\left\{\theta_{n}\right\}_{n\in\mathbb N}\subset \mathbb{R}$ such that, up to a subsequence, 
\begin{equation*}
e^{-i\theta_{n}}u_{n}(\cdot+y_{n})\rightarrow R\quad \text{strongly in ${W}(\mathbb{R}^{N})$.}
\end{equation*}
In particular, $\mathcal{E}(R)=\ell_{R}$ and $\|u_{n}-e^{i\theta_{n}}R(\cdot-y_{n})\|^{2}_{H^{1}}\rightarrow 0$, as $n$ goes to $+\infty$.
\end{lemma}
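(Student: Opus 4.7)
The plan is to carry out Lions' concentration--compactness scheme on the constrained minimization $\ell_R$, and then to identify the limit profile with the Gausson $R$ via the uniqueness result for positive solutions of \eqref{EP} recalled in the introduction \cite[Theorem~1.2]{ADSQ1}.

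I would first show that any minimizing sequence $\{u_n\}\subset W(\mathbb{R}^N)$ is bounded in $W(\mathbb{R}^N)$. Using the decomposition $F=\Psi-\Phi$ from \eqref{IFD} together with \eqref{DB} and standard Gagliardo--Nirenberg interpolation (noting that $\Psi(s)\le C(s^2+s^{2+\delta})$ for some small $\delta>0$), one bounds $\int\Psi(|u_n|)\le \eta\|\nabla u_n\|_{L^2}^2+C(\eta)$ for arbitrary $\eta>0$. Choosing $\eta<1/2$ and combining with the $L^2$--constraint and $\mathcal{E}(u_n)\to\ell_R$ gives a uniform $H^1$--bound; then $\int\Phi(|u_n|)$ is controlled via $\mathcal{E}(u_n)$, yielding an $L^\Phi$--bound. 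A direct calculation from $-\tfrac12\Delta R+R-R\log|R|^2=0$ (i.e.\ \eqref{EP} with $\omega=1$) gives $\mathcal{E}(R)=-\|R\|_{L^2}^2$, so $\ell_R\le -\|R\|_{L^2}^2<0$.

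Next I apply the Lions trichotomy to $\rho_n=|u_n|^2$. \emph{Vanishing} is ruled out: if $u_n\to 0$ in $L^{2+\delta}(\mathbb{R}^N)$ for some small $\delta>0$, then since $\Psi$ is supported in $\{|u|\ge e^{-3}\}$ and $|\Psi(s)|\le Cs^{2+\delta}$ there, we get $\int\Psi(|u_n|)\to 0$, so using $\Phi\ge 0$,
$$
\mathcal{E}(u_n)=\tfrac12\|\nabla u_n\|_{L^2}^2+\int\Phi(|u_n|)-\int\Psi(|u_n|)\ge o(1),
$$
contradicting $\ell_R<0$. For \emph{dichotomy}, the identity $\mathcal{E}(cu)=c^2\mathcal{E}(u)-c^2\log(c^2)\|u\|_{L^2}^2$ yields the scaling $\ell_m=(m/m_R)\ell_{m_R}-m\log(m/m_R)$ with $m_R:=\|R\|_{L^2}^2$, hence
$$
\ell_{m_1+m_2}-\ell_{m_1}-\ell_{m_2}=-\bigl[(m_1+m_2)\log(m_1+m_2)-m_1\log m_1-m_2\log m_2\bigr]<0
$$
for $m_1,m_2>0$ by strict convexity of $s\mapsto s\log s$; the standard cut-off argument then excludes dichotomy. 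Hence compactness up to translation holds: there exist $y_n\in\mathbb{R}^N$ such that $v_n:=u_n(\cdot+y_n)\rightharpoonup v$ in $H^1$ along a subsequence with $\|v\|_{L^2}^2=\|R\|_{L^2}^2$. Using \eqref{DB} and local $L^2$--convergence one passes to the limit in the logarithmic term, obtaining $\mathcal{E}(v)\le\ell_R$, hence $\mathcal{E}(v)=\ell_R$ by admissibility; the same estimate forces $\|\nabla v_n\|_{L^2}\to\|\nabla v\|_{L^2}$ and $\int\Phi(|v_n|)\to\int\Phi(|v|)$, upgrading the convergence to strong in $W(\mathbb{R}^N)$. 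Finally, $v$ solves \eqref{EP} for some $\omega\in\mathbb{R}$; since $|\nabla v|\ge|\nabla|v||$, $|v|$ is also a minimizer, and \cite[Theorem~1.2]{ADSQ1} gives $|v|=\phi_\omega(\cdot-\tilde y)$. The mass constraint pins $\omega=1$, so $|v|=R(\cdot-\tilde y)$, and the imaginary part of \eqref{EP}, $\nabla\cdot(|v|^2\nabla\arg v)=0$, forces $\arg v$ to be a constant $\theta$. After absorbing $\tilde y$ into $y_n$, we get $e^{-i\theta_n}u_n(\cdot+y_n)\to R$ in $W(\mathbb{R}^N)$.

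I expect the main technical obstacle to be passing to the limit in the logarithmic term under weak $H^1$--convergence, since the integrand $s^2\log s^2$ is both singular at $0$ and superlinear at $\infty$; the Orlicz framework of Section~\ref{S:0} together with Cazenave's estimate \eqref{DB} are precisely what is needed to handle these two antagonistic behaviors simultaneously.
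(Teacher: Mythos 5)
Your argument is correct in outline, but it takes a genuinely different route from the paper. You run the full Cazenave--Lions concentration--compactness machinery from scratch: boundedness via the $\Phi/\Psi$ splitting, exclusion of vanishing from $\ell_R\le\mathcal{E}(R)=-\|R\|_{L^2}^2<0$, exclusion of dichotomy from the exact scaling identity $\mathcal{E}(cu)=c^2\mathcal{E}(u)-c^2\log(c^2)\|u\|_{L^2}^2$ (which makes the strict subadditivity $\ell_{m_1+m_2}<\ell_{m_1}+\ell_{m_2}$ an explicit consequence of the strict convexity of $s\mapsto s\log s$ --- a nice feature special to the logarithmic nonlinearity), and finally identification of the limit profile through the Euler--Lagrange equation \eqref{EP} and the uniqueness theorem of \cite{ADSQ1}. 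The paper instead short-circuits all of this by importing Theorem \ref{AT} (precompactness of minimizing sequences for the Nehari-constrained problem \eqref{MPE}, proved in \cite{AHA1}): using the algebraic identities $S=\tfrac12 I+\tfrac12\|\cdot\|_{L^2}^2$ and $I=\mathcal{E}+\|\cdot\|_{L^2}^2$ together with Lemma \ref{L12}, a minimizing sequence for $\ell_R$ satisfies $I(u_n)\to 0$, so after a rescaling $\lambda_n\to 1$ it becomes a minimizing sequence for $d$, to which Theorem \ref{AT} applies directly. What your approach buys is self-containedness (you do not need the external compactness result, and you recover $\mathcal{E}(R)=\ell_R$ and the identification of the minimizer as byproducts); what the paper's approach buys is brevity and the avoidance of the one genuinely delicate point in your sketch, namely that invoking \cite[Theorem 1.2]{ADSQ1} requires knowing that the nonnegative minimizer $|v|$ is \emph{strictly} positive and $C^2$ --- not automatic here because $s\mapsto s\,\mathrm{Log}\,s^2$ fails to be locally Lipschitz at the origin, so the strong maximum principle needs the ad hoc treatment given in \cite{PMS,ADSQ1}. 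If you flesh out that positivity step (and the routine cut-off estimates in the dichotomy exclusion), your proof stands as a valid alternative.
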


\noindent
Before giving the proof of Lemma \ref{L10}, we need to establish some preliminaries. We define the following functionals of class $C^{1}$ on $W(\mathbb{R}^{N})$:
\begin{align}\label{SF}
 S(u)&:=\frac{1}{4}\int_{\mathbb{R}^{N}}\left|\nabla u\right|^{2}dx+\int_{\mathbb{R}^{N}}\left|u\right|^{2}dx-\frac{1}{2}\int_{\mathbb{R}^{N}}\left|u\right|^{2}\mbox{Log}\left|u\right|^{2}dx,\\
  I(u)&:=\frac{1}{2}\int_{\mathbb{R}^{N}}\left|\nabla u\right|^{2}dx+\int_{\mathbb{R}^{N}}\left|u\right|^{2}dx-\int_{\mathbb{R}^{N}}\left|u\right|^{2}\mbox{Log}\left|u\right|^{2}dx. \nonumber
\end{align}
Notice that \eqref{EP} with $\omega=1$ is equivalent to $S^{\prime}(u)=0$, and $I(u)=\left\langle S^{\prime}(u),u\right\rangle$ is the so-called Nehari functional.
Finally, let us consider the minimization problem
\begin{align}
\begin{split}\label{MPE}
d&:={\inf}\left\{S(u):\, u\in W(\mathbb{R}^{N}) \setminus  \left\{0 \right\},  I(u)=0\right\} \\ 
&=\frac{1}{2}\,{\inf}\left\{\left\|u\right\|_{L^{2}}^{2}:u\in  W(\mathbb{R}^{N}) \setminus \left\{0 \right\},  I(u)= 0 \right\}.
\end{split}
\end{align}

Now we recall the following fact from \cite[Proposition 1.4 and Lemma 3.1]{AHA1}.
\begin{theorem} \label{AT}
Let $\left\{ u_{n}\right\}_{n\in\mathbb N}\subset W^{}(\mathbb{R}^{N})$ be a minimizing sequence for $d$. Then there exist a family $(y_{n})\subset \mathbb{R}^{N}$ and  $\left\{\theta_{n}\right\}_{n\in\mathbb N}\subset \mathbb{R}$ such that, up to a subsequence, $e^{-i\theta_{n}}u_{n}(\cdot+y_{n})\rightarrow R$ strongly in $W(\mathbb{R}^{N})$. In particular, $I(R)=0$ and $S(R)=d$.
\end{theorem}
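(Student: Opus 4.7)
The plan is to implement a concentration--compactness argument adapted to the Orlicz framework of Section \ref{S:0}. Let $\{u_n\}\subset W({\mathbb R}^N)\setminus\{0\}$ satisfy $I(u_n)=0$ and $S(u_n)\to d$. My first step is to establish boundedness in $W({\mathbb R}^N)$. From the pointwise identity $S(u)-\tfrac{1}{2}I(u)=\tfrac{1}{2}\|u\|_{L^2}^2$ and the constraint, one gets $\|u_n\|_{L^2}^2\to 2d$. Splitting the integrand $|u_n|^2\mathrm{Log}|u_n|^2$ into its positive part on $\{|u_n|\geq 1\}$ (bounded by $C|u_n|^{2+\delta}$ and controlled via Gagliardo--Nirenberg) and its negative part on $\{|u_n|<1\}$ (bounded by $C|u_n|^{2-\delta}$), and then reading $I(u_n)=0$ as an identity for $\|\nabla u_n\|_{L^2}^2$, one obtains a uniform gradient bound; growth of $\Phi$ then yields boundedness in $W({\mathbb R}^N)$.

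The second step rules out vanishing. If $\sup_{y}\int_{B(y,1)}|u_n|^2\,dx\to 0$, Lions' lemma gives $u_n\to 0$ in $L^p$ for every $p\in(2,2^*)$; the bound $\Phi(s)\leq C(s^{2+1/N}+s^{2-1/N})+\Psi(s)$ used in the proof of Lemma \ref{POii}, together with \eqref{DB}, forces $\int\Phi(|u_n|)\to 0$ and hence $\int|u_n|^2\mathrm{Log}|u_n|^2\to 0$. This would give $I(u_n)\geq \|u_n\|_{L^2}^2\to 2d>0$, contradicting $I(u_n)=0$. So I can select $\eta_0>0$ and $y_n\in{\mathbb R}^N$ with $\int_{B(y_n,1)}|u_n|^2\geq\eta_0$; setting $v_n:=u_n(\cdot+y_n)$, a subsequence converges weakly in $H^1$ and a.e.\ to some $u^*\not\equiv 0$.

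The heart of the argument is to show that $u^*$ is a minimizer. A Brezis--Lieb splitting applied to each piece of $S$ and $I$ (the log term handled via a.e.\ convergence combined with equi-integrability in $L^\Phi$, in the spirit of \cite{CL}) gives
$$
S(v_n)=S(u^*)+S(v_n-u^*)+o(1),\qquad 0=I(v_n)=I(u^*)+I(v_n-u^*)+o(1).
$$
A scaling argument is the key tool: if $w\neq 0$ satisfies $I(w)<0$, there exists $\lambda\in(0,1)$, determined by $2\log\lambda=I(w)/\|w\|_{L^2}^2$, with $I(\lambda w)=0$ and $\tfrac{1}{2}\|\lambda w\|_{L^2}^2\geq d$. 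Applying this to $u^*$, combined with $\|u^*\|_{L^2}^2\leq 2d$ from weak lower semicontinuity, rules out $I(u^*)<0$; applying it to $w_n:=v_n-u^*$, combined with $\|w_n\|_{L^2}^2\to 2d-\|u^*\|_{L^2}^2<2d$, rules out $I(u^*)>0$. Hence $I(u^*)=0$ and, by the definition of $d$, $\|u^*\|_{L^2}^2=2d$, so $S(u^*)=d$. Convergence of the $L^2$ norm then upgrades weak to strong $L^2$ convergence of $v_n$ to $u^*$; convergence of $\int|v_n|^2\mathrm{Log}|v_n|^2$ (via the Orlicz continuity of \cite{CL}) forces $\|\nabla v_n\|_{L^2}\to\|\nabla u^*\|_{L^2}$, delivering strong convergence in $H^1$ and in $L^\Phi$, hence in $W({\mathbb R}^N)$.

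Finally, the Lagrange multiplier rule applied to $u^*$ produces a constant $\omega$ for which $-\tfrac{1}{2}\Delta u^*+\omega u^*-u^*\mathrm{Log}|u^*|^2=0$, and pairing this equation with $u^*$ and using $I(u^*)=0$ fixes $\omega=1$. Writing $u^*=e^{i\theta^*}|u^*|$, a diamagnetic-type argument shows that $|u^*|$ is itself a minimizer of the same problem, and then \cite[Theorem 1.2]{ADSQ1} identifies $|u^*|=R(\cdot-y^*)$. Absorbing $y^*$ into the shift $y_n$ and $\theta^*$ into the phase $\theta_n$ yields the announced strong convergence $e^{-i\theta_n}u_n(\cdot+y_n)\to R$ in $W({\mathbb R}^N)$. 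The main obstacle throughout is the logarithmic singularity at the origin, which prevents a direct use of Sobolev embeddings on the nonlinear term; this is overcome only by the careful Brezis--Lieb decompositions in the Orlicz space $L^\Phi$ from \cite{CL}.
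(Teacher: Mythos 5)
First, note that the paper itself offers no proof of Theorem \ref{AT}: it is imported verbatim from \cite[Proposition 1.4 and Lemma 3.1]{AHA1}, so the only fair comparison is with the concentration--compactness argument of that reference, which your proposal essentially reconstructs (boundedness via Gagliardo--Nirenberg on the positive part of the logarithm, exclusion of vanishing, Brezis--Lieb splitting of $S$ and $I$, the logarithmic rescaling $\lambda=\exp\bigl(I(w)/(2\|w\|_{L^2}^2)\bigr)$ to project onto the Nehari set, and identification of the limit through \cite[Theorem 1.2]{ADSQ1}). The overall strategy is the right one and the dichotomy-exclusion step via the scaling map is carried out correctly.

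There are, however, two concrete missteps. In the vanishing step, the claim that $\int\Phi(|u_n|)\to 0$ and hence $\int|u_n|^2\mathrm{Log}|u_n|^2\to 0$ does not follow: vanishing gives no control on $\|u_n\|_{L^{2-1/N}}$ (which need not even be bounded for an $H^1$-bounded sequence), and \eqref{DB} cannot give $\int\Psi(|u_n|)\to 0$ because $\|u_n\|_{L^2}^2\to 2d\neq 0$. Indeed, for a spreading sequence the logarithmic integral tends to $-\infty$, not to $0$. What you actually need is only $\limsup_n\int|u_n|^2\mathrm{Log}|u_n|^2\leq 0$, which follows from the positive part alone, since $\int_{\{|u_n|\geq 1\}}|u_n|^2\mathrm{Log}|u_n|^2\leq C\|u_n\|_{L^{2+\delta}}^{2+\delta}\to 0$ by Lions' lemma; you should also record that $d>0$ (via the logarithmic Sobolev inequality), otherwise the contradiction $I(u_n)\geq 2d>0$ has no force. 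The second gap is in the upgrade to strong convergence: the map $u\mapsto\int\Phi(|u|)$ is \emph{not} continuous under strong $L^2$ convergence plus $H^1$ boundedness, so ``Orlicz continuity'' does not directly give $\int|v_n|^2\mathrm{Log}|v_n|^2\to\int|u^*|^2\mathrm{Log}|u^*|^2$. The standard repair is to write $S$ as a sum of the weakly lower semicontinuous terms $\tfrac14\|\nabla\cdot\|_{L^2}^2$ and $\tfrac12\int\Phi(|\cdot|)$ plus terms that converge (the $L^2$ norm and $\int\Psi$, the latter by \eqref{DB} and strong $L^2$ convergence); since $S(v_n)\to d=S(u^*)$, each lower semicontinuous term must converge to its value at $u^*$, which yields $\|\nabla v_n\|_{L^2}\to\|\nabla u^*\|_{L^2}$ and $\int\Phi(|v_n|)\to\int\Phi(|u^*|)$ simultaneously, hence strong convergence in $W(\mathbb{R}^{N})$. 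With these two repairs the argument closes and coincides with the proof in \cite{AHA1}.
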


\begin{lemma} \label{L12}
The following fact hold,
\begin{equation}\label{Mnm}
S({R})=\inf\left\{S(u): u\in {W}(\mathbb{R}^{N}), \quad\|u\|^{2}_{L^{2}}=\|R\|^{2}_{L^{2}}\right\}.
\end{equation}
In particular, $\mathcal{E}(R)=\ell_{R}$ where $\ell_{R}$ is defined in \eqref{MP}.
\end{lemma}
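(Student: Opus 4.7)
The strategy is to combine the algebraic identity $S(u)=\tfrac{1}{2}I(u)+\tfrac{1}{2}\|u\|_{L^{2}}^{2}$ with the rigidity of minimizers of the Nehari problem \eqref{MPE}. Since $R$ solves \eqref{EP} with $\omega=1$ one has $I(R)=\langle S^{\prime}(R),R\rangle=0$, and Theorem \ref{AT} then yields
$$
d=S(R)=\tfrac{1}{2}\|R\|_{L^{2}}^{2}.
$$
On the constraint set $\{u\in W({\mathbb R}^{N}):\|u\|_{L^{2}}^{2}=\|R\|_{L^{2}}^{2}\}$, the identity above turns the target inequality $S(u)\geq S(R)$ into the single-sign statement $I(u)\geq 0$, so proving \eqref{Mnm} reduces to establishing the latter.

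I would prove $I(u)\geq 0$ under the mass constraint by contradiction, dragging $u$ onto the Nehari manifold via the mass-preserving dilation $u_{\lambda}(x):=\lambda^{N/2}u(\lambda x)$. A direct change of variables gives
$$
I(u_{\lambda})=\tfrac{\lambda^{2}}{2}\|\nabla u\|_{L^{2}}^{2}+\|u\|_{L^{2}}^{2}-N\log\lambda\,\|u\|_{L^{2}}^{2}-\int_{{\mathbb R}^{N}}|u|^{2}\mathrm{Log}\,|u|^{2}\,dx,
$$
which tends to $+\infty$ as $\lambda\to 0^{+}$ thanks to the term $-N\log\lambda\,\|u\|_{L^{2}}^{2}$. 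Assuming $I(u)<0$, the intermediate value theorem produces some $\lambda^{\ast}\in(0,1)$ with $I(u_{\lambda^{\ast}})=0$; mass preservation keeps $u_{\lambda^{\ast}}$ inside the admissible set of \eqref{MPE}, and $S(u_{\lambda^{\ast}})=\tfrac{1}{2}\|u_{\lambda^{\ast}}\|_{L^{2}}^{2}=\tfrac{1}{2}\|R\|_{L^{2}}^{2}=d$ shows that $u_{\lambda^{\ast}}$ realizes the infimum in \eqref{MPE}.

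Applying Theorem \ref{AT} to the constant minimizing sequence $u_{n}\equiv u_{\lambda^{\ast}}$ forces $u_{\lambda^{\ast}}=e^{i\theta}R(\cdot-y)$ for some $\theta\in{\mathbb R}$ and $y\in{\mathbb R}^{N}$. Inverting the dilation identifies $u$, up to a phase and a translation, with the rescaled Gaussian $(\lambda^{\ast})^{-N/2}R(\cdot/\lambda^{\ast})$. Invoking the translation- and phase-invariance of $I$ and using the explicit Gaussian moments of $R$, a short direct computation yields
$$
I(u)=\tfrac{N}{2}\bigl((\lambda^{\ast})^{-2}-1+2\log\lambda^{\ast}\bigr)\|R\|_{L^{2}}^{2}.
$$
The elementary function $g(\lambda)=\lambda^{-2}-1+2\log\lambda$ satisfies $g(1)=0$ and has $g^{\prime}(\lambda)=2(\lambda^{2}-1)/\lambda^{3}$, so it is strictly decreasing on $(0,1)$ and strictly increasing on $(1,+\infty)$; since $\lambda^{\ast}\in(0,1)$, this gives $g(\lambda^{\ast})>0$ and therefore $I(u)>0$, contradicting the assumption $I(u)<0$.

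This proves $I(u)\geq 0$ on the constraint, hence \eqref{Mnm}; the ``in particular'' statement follows at once from $\mathcal{E}(u)-\mathcal{E}(R)=I(u)-I(R)=I(u)\geq 0$ on the same constraint, which gives $\mathcal{E}(R)=\ell_{R}$. The main obstacle is the interplay between the dilation and the Nehari rigidity: a naive use of $S(v)\geq d$ is only available on the Nehari manifold, so it is essential both that $u_{\lambda}$ preserves the $L^{2}$-norm (so the minimum-mass reformulation of \eqref{MPE} can be invoked after scaling) and that Theorem \ref{AT} characterizes minimizers of $d$ uniquely up to translations and phases, making the shape of $u$ explicit enough for the final sign computation.
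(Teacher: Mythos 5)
Your proposal is correct, and it reaches the key inequality $I(u)\geq 0$ on the mass sphere by a genuinely different mechanism than the paper. Both arguments start from the identity $S(u)=\tfrac12 I(u)+\tfrac12\|u\|_{L^2}^2$ and from $S(R)=d$, $I(R)=0$, so that \eqref{Mnm} reduces to $I(u)\geq 0$ whenever $\|u\|_{L^2}^2=\|R\|_{L^2}^2$. At that point the paper simply multiplies by a scalar: if $I(u)<0$, then since $I(\lambda u)=\lambda^2\bigl(I(u)-2\log\lambda\,\|u\|_{L^2}^2\bigr)$ there is $\lambda\in(0,1)$ with $I(\lambda u)=0$ and $\|\lambda u\|_{L^2}^2<\|R\|_{L^2}^2$, which contradicts in one line the second (mass-minimization) formulation of $d$ in \eqref{MPE}. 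You instead use the $L^2$-preserving dilation $u_\lambda=\lambda^{N/2}u(\lambda\cdot)$, land on the Nehari manifold \emph{at} the minimal mass, invoke the rigidity in Theorem \ref{AT} to identify $u_{\lambda^*}$ as a phase-translate of $R$, and then compute $I(u)=\tfrac{N}{2}\bigl((\lambda^*)^{-2}-1+2\log\lambda^*\bigr)\|R\|_{L^2}^2>0$ explicitly from the Gaussian moments (I checked this formula; it is right, using $\|\nabla R\|_{L^2}^2=N\|R\|_{L^2}^2$). Your route is valid but costs more: it uses the compactness/rigidity content of Theorem \ref{AT} rather than only its variational conclusion, it requires the explicit Gaussian form of $R$, and the step ``the constant minimizing sequence forces $u_{\lambda^*}=e^{i\theta}R(\cdot-y)$'' needs a short extra argument (boundedness of the translations $y_n$, which follows since otherwise $u_{\lambda^*}(\cdot+y_n)\rightharpoonup 0$ while converging strongly to $R\neq 0$, and then passage to the limit in $y_n,\theta_n$). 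The paper's scalar rescaling avoids all of this because it exploits precisely the fact that the dilation $\lambda u$ \emph{decreases} the mass, so the mere existence of a smaller-mass Nehari element is already the contradiction; your dilation preserves the mass, which is why you are then forced to classify the minimizer.
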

\begin{proof}
First, it is clear that $\inf\left\{S(u): u\in {W}(\mathbb{R}^{N}),\,\,\|u\|^{2}_{L^{2}}=\|R\|^{2}_{L^{2}}\right\}\leq S({R})$.
Next we claim that if $\|u\|^{2}_{L^{2}}=\|R\|^{2}_{L^{2}}$, then $I(u)\geq 0$, where  $I$ is the Nehari functional. Indeed, 
assume by contradiction that $I(u)<0$.  It is not hard to show that there exists $\lambda$, $0<\lambda<1$, such that $I(\lambda u)=0$. But then, $\|\lambda u\|^{2}_{L^{2}}<\|u\|^{2}_{L^{2}}=\|R\|^{2}_{L^{2}}$ and $I(\lambda u)=0$, which is a contradiction with Theorem \ref{AT}. Therefore, if $\|u\|^{2}_{L^{2}}=\|R\|^{2}_{L^{2}}$, then $I(u)\geq 0$  and
\begin{equation*}
S(u)=\frac{1}{2}I(u)+\frac{1}{2}\|u\|^{2}_{L^{2}}\geq \frac{1}{2}\|u\|^{2}_{L^{2}}=\frac{1}{2}\|R\|^{2}_{L^{2}}=S(R);
\end{equation*}
this implies \eqref{Mnm}. Finally, the proof of the last assertion of lemma  immediately follows from \eqref{Mnm}. This concludes the proof of Lemma \ref{L12}.
\end{proof}

\noindent
Now we give the proof of Lemma \ref{L10}.

\begin{proof}[\bf{Proof of Lemma \ref{L10}}]
Let $\left\{u_{n}\right\}_{n\in\mathbb N}\subset W(\mathbb{R}^{N})$ be a minimizing sequence for the value $\ell_{R}$, that is $\|u_{n}\|^{2}_{L^{2}}=\|R\|^{2}_{L^{2}}$ and $\mathcal{E}(u_{n})\to \ell_{R},$ as $n\to\infty$.  From Lemma~\ref{L12}, we have as $n\to\infty$ that 
\begin{equation*}
I(u_{n})=\mathcal{E}(u_{n})+\|u_{n}\|^{2}_{L^{2}}\rightarrow \mathcal{E}(R)+\|R\|^{2}_{L^{2}}=I(R)=0.
\end{equation*}
Then, by elementary computations, we can see that  there exists a sequence $\left\{\lambda_{n}\right\}_{n\in\mathbb N}\subset \mathbb{R}^{+}$ such that $I(\lambda_{n} u_{n})=0$ and  $\lambda_{n}\rightarrow 1$.  Next, define the sequence $f_{n}(x)=\lambda_{n}u_{n}(x)$.  It is clear that $\|f_{n}\|^{2}_{L^{2}}\rightarrow\|R\|^{2}_{L^{2}}$ and $I(f_{n})=0$ for any $n\in\mathbb{N}$. Therefore, $\left\{f_{n}\right\}_{n\in\mathbb N}$  is a minimizing sequence for $d$. Thus, by Theorem \ref{AT}, up to a subsequence, there exist $\left\{y_{n}\right\}_{n\in\mathbb N}\subset \mathbb{R}^{N}$ and $\left\{\theta_{n}\right\}_{n\in\mathbb N}\subset \mathbb{R}$ such that $e^{-i\theta_{n}}f_{n}(\cdot+y_{n})\rightarrow R$  {strongly in ${W}(\mathbb{R}^{N})$.} Since  $\|f_{n}-u_{n}\|^{}_{{W}(\mathbb{R}^{N})}\rightarrow 0$ as $n\to\infty$, the assertion follows.
\end{proof}

\noindent
Now we give the proof of Proposition \ref{P1}.
\noindent 
\begin{proof}[\bf{Proof of Proposition \ref{P1}}]
The result is proved  by contradiction. Assume that there exist  $\eta>0$ and a sequence $\left\{\phi_{n}\right\}_{n\in\mathbb N}\subset \Sigma(\mathbb{R}^{N})$, such that $\|\phi_{n}\|^{2}_{L^{2}}=\|R\|^{2}_{L^{2}}$ and 
\begin{align}
& \mathcal{E}(\phi_{n})-\mathcal{E}(R)<\frac{1}{n}, \label{T21} \\
&\inf_{\theta\in \mathbb{R}}\inf_{y\in \mathbb{R}^{N}}  \big\|\phi_{n}-e^{i\theta}R(\cdot- y) \big\|_{H^{1}}\geq \eta , \quad \textrm{for any $n\in \mathbb{N}$.}\label{T22}
\end{align}
Since $ \mathcal{E}(\phi_{n})\geq \mathcal{E}(R)$, from formula \eqref{T21}, it follows that $\mathcal{E}(\phi_{n})\rightarrow \mathcal{E}(R)$ as $n\to \infty$. That is, 
$\left\{u_{n}\right\}_{n\in\mathbb N}$  is a minimizing sequence for $\ell_{R}$. By Lemma \ref{L10}, up to a subsequence, there exist $\left\{y_{n}\right\}_{n\in\mathbb N}\subset \mathbb{R}^{N}$ and $\left\{\theta_{n}\right\}_{n\in\mathbb N}\subset \mathbb{R}$ such that 
\begin{equation*}
\big\|\phi_{n}-e^{i\theta_{n}}R(\cdot- y_{n}) \big\|_{H^{1}}\rightarrow 0 \quad \text{as $n\to \infty$,}
\end{equation*}
which is a contradiction with \eqref{T22}. This ends the proof.
\end{proof}

%-------------------------------------------------------------------------------------------------

\section{Dynamics of the Gausson}\label{S:1}
The main aim of this section is to prove Theorem \ref{T1}. Let  $u_{\epsilon}$ be a solution of the Cauchy problem \eqref{0NL} with initial data \eqref{ID}.
We define the momentum as a function $p_{\epsilon}:\mathbb{R}^{N}\times\mathbb{R}\rightarrow \mathbb{R}^{N}$ by setting 
\begin{equation*}
p_{\epsilon}(x,t):=\frac{1}{\epsilon^{N-1}}\text{Im}(\overline{u_{\epsilon}(x,t)}\nabla u_{\epsilon}(x,t)),
\end{equation*}
where $\text{Im}(z)$ denotes the imaginary part of $z$ and $\overline{u_{\epsilon}}$ is the complex conjugate of $u_{\epsilon}$.
Note that, by assumption, there exists $\omega>0$ such that $V(x)+\omega\geq 0$ for all $x\in \mathbb{R}^{N}$. In particular, we can assume $V\geq 0$. Indeed, if $u_{\epsilon}$ is a solution to \eqref{0NL}-\eqref{ID}, then  $e^{-i{\omega t}/{\epsilon}}u_{\epsilon}$ is a solution of \eqref{0NL}-\eqref{ID} with the potential $V(x)+\omega\geq 0$ instead of $V$.

\vskip3pt
\noindent
We have the following result.
\begin{lemma} \label{L1}
Let $u_{\epsilon}$ be the solution to problem \eqref{0NL} corresponding to the initial data \eqref{ID}. Then there exists a positive constant $C$, independent of $\epsilon>0$, such that 
$$
\sup_{t\in \mathbb{R}}\|\nabla u_{{\epsilon}}(t)\|^{2}_{L^{2}}\leq C\epsilon^{N-2}. 
$$
Moreover, there exists a positive constant $C_{1}$ such that
\begin{equation*}
\sup_{t\in \mathbb{R}}\Big|\int_{\mathbb{R}^{N}}p_{\epsilon}(x,t)dx\Big|\leq C_{1}.
\end{equation*}
 \end{lemma}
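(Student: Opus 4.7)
The approach is to combine the two conservation laws from Proposition \ref{PCS} with a Gagliardo--Nirenberg argument. First I would compute $E_\epsilon(u_{\epsilon,0})$ explicitly: after the change of variables $y = (x-x_0)/\epsilon$, each of the three integrals defining $E_\epsilon(u_{\epsilon,0})$ reduces to an $\epsilon$-independent integral involving $R$, $\nabla R$ and $\log R^2 = 1+N-2|y|^2$, plus a term of the form $\int V(x_0+\epsilon y)|R(y)|^2\, dy$. Since $V$ is bounded and $R$ is Gaussian, each such integral stays uniformly bounded in $\epsilon$, so $E_\epsilon(u_{\epsilon,0})\le C$ independently of $\epsilon>0$. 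Assuming $V\ge 0$ as already justified and invoking energy conservation, this yields
\[
\frac{1}{2\epsilon^{N-2}}\|\nabla u_\epsilon(t)\|_{L^2}^2 \le C + \frac{1}{\epsilon^N}\int_{\mathbb{R}^N} |u_\epsilon(t)|^2 \log |u_\epsilon(t)|^2\, dx.
\]

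The key step is to bound the logarithmic term from above. Using $s^2\log s^2\le C_\delta s^{2+\delta}$ for $s\ge 1$ and arbitrarily small $\delta>0$, I would estimate $\int |u_\epsilon|^2\log|u_\epsilon|^2\, dx\le C\|u_\epsilon\|_{L^{2+\delta}}^{2+\delta}$ and then apply the Gagliardo--Nirenberg inequality
\[
\|u_\epsilon\|_{L^{2+\delta}}^{2+\delta}\le C\|\nabla u_\epsilon\|_{L^2}^{N\delta/2}\|u_\epsilon\|_{L^2}^{2+\delta-N\delta/2},
\]
valid for $\delta<4/(N-2)$ when $N\ge 3$ (any $\delta>0$ otherwise). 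Combining with the mass conservation $\|u_\epsilon(t)\|_{L^2}^2=\epsilon^N\|R\|_{L^2}^2$, the $\epsilon$-powers collapse and produce
\[
\frac{1}{\epsilon^N}\int_{\mathbb{R}^N}|u_\epsilon|^2\log|u_\epsilon|^2\, dx \le C\bigl(\epsilon^{2-N}\|\nabla u_\epsilon(t)\|_{L^2}^2\bigr)^{N\delta/4}.
\]
Choosing $\delta<4/N$ makes the exponent $N\delta/4<1$, and Young's inequality then absorbs this contribution into the left-hand side of the energy inequality above, yielding the bound $\|\nabla u_\epsilon(t)\|_{L^2}^2\le C\epsilon^{N-2}$ uniformly in $t\in\mathbb{R}$.

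For the momentum estimate, Cauchy--Schwarz together with mass conservation and the gradient bound just obtained give
\[
\Bigl|\int_{\mathbb{R}^N} p_\epsilon(x,t)\, dx\Bigr|\le \frac{1}{\epsilon^{N-1}}\|u_\epsilon(t)\|_{L^2}\|\nabla u_\epsilon(t)\|_{L^2}\le C\,\epsilon^{1-N}\cdot \epsilon^{N/2}\cdot \epsilon^{(N-2)/2}=C_1,
\]
as the $\epsilon$-powers cancel exactly. The principal technical point is the absorption step: the Gagliardo--Nirenberg exponent must be subcritical, and the $\epsilon$-scalings of all terms must match so that the logarithmic nonlinearity becomes strictly sublinear in $\epsilon^{2-N}\|\nabla u_\epsilon\|_{L^2}^2$. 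The two requirements together force $\delta<4/N$, which is comfortable in every dimension; the rest is bookkeeping and a direct application of the conservation laws from Proposition \ref{PCS}.
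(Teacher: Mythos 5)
Your argument is correct, and the computation of $E_\epsilon(u_{\epsilon,0})$, the use of $V\geq 0$ together with the conservation laws, and the Cauchy--Schwarz estimate for the momentum all coincide with the paper. Where you genuinely diverge is in controlling the logarithmic term: the paper rescales to $v_\epsilon(x)=u_\epsilon(\epsilon x)$ (so that $\|v_\epsilon\|_{L^2}^2=m$ is independent of $\epsilon$) and then invokes the logarithmic Sobolev inequality, $\int |v_\epsilon|^2\,\mathrm{Log}\,|v_\epsilon|^2\,dx\leq \tfrac{\alpha^2}{\pi}\|\nabla v_\epsilon\|_{L^2}^2+\bigl(\mathrm{Log}\,m-N(1+\mathrm{Log}\,\alpha)\bigr)m$, so the gradient contribution appears \emph{linearly} with an arbitrarily small coefficient $\alpha^2/\pi$ and is absorbed directly with no exponent bookkeeping. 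You instead discard the region $\{|u_\epsilon|<1\}$ where the integrand is nonpositive, use $s^2\log s^2\leq C_\delta s^{2+\delta}$ on $\{|u_\epsilon|\geq 1\}$, and run Gagliardo--Nirenberg plus Young with the subcritical exponent $N\delta/4<1$; your verification that the $\epsilon$-powers cancel after inserting $\|u_\epsilon(t)\|_{L^2}^2=\epsilon^N m$ is correct, and the constraint $\delta<4/N$ (which is stronger than the Sobolev-admissibility constraint $\delta<4/(N-2)$ for $N\geq3$) indeed closes the absorption. Your route is more elementary in that it avoids the log-Sobolev inequality, at the cost of the sublinear-exponent argument; the paper's route is shorter and makes the smallness of the absorbed coefficient completely explicit. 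Both are complete proofs.
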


\begin{proof}
First, notice that  $E_{\epsilon}(u_{\epsilon}(x,t))=E_{\epsilon}(u_{\epsilon}(0,t))$. Moreover, by Lemma \ref{L4} below, there exists a constant $C>0$, independent of $\epsilon>0$, such that $E_{\epsilon}(u_{\epsilon}(x,t))\leq C$. Set $v_{\epsilon}(x):=u_{\epsilon}(\epsilon x)$. Since $V$ is  is nonnegative, we see that 
\begin{equation}\label{Dp1}
\frac{1}{2}\|\nabla v_{\epsilon}\|^{2}_{L^{2}}-\int_{\mathbb{R}^{N}}\left|v_{\epsilon}\right|^{2}\mbox{Log}\left|v_{\epsilon}\right|^{2}dx \leq C.
\end{equation}
Now, applying logarithmic Sobolev inequality (see \cite[Theorem 8.14]{ELL}) we have
\begin{equation*}
\int_{\mathbb{R}^{N}}\left|v_{\epsilon}\right|^{2}\mathrm{Log}\left|v_{\epsilon}\right|^{2}dx\leq \frac{\alpha^{2}}{\pi} \|\nabla v_{\epsilon} \|^{2}_{L^{2}}+\left(\mathrm{Log} \|v_{\epsilon}\|^{2}_{L^{2}}-N\left(1+\mathrm{Log}\,\alpha\right)\right)\|v_{\epsilon} \|^{2}_{L^{2}},
\end{equation*}
for any $\alpha>0$. By conservation the mass, we obtain that $\|v_{\epsilon}\|^{2}_{L^{2}}=m:=\|R\|^{2}_{L^{2}}$. Therefore,
\begin{equation*}
\left(\frac{1}{2}-\frac{\alpha^2}{\pi}\right)\|\nabla v_{\epsilon}\|^{2}_{L^{2}}\leq C+\left(\mathrm{Log}\,m-N\left(1+\mathrm{Log}\,\alpha\right)\right)m.
\end{equation*}
Taking $\alpha > 0$ sufficiently small, the first the assertion of lemma follows by rescaling.
On the other hand, by H\"older inequality, the mass conservation law and the first the assertion of lemma we see that
\begin{align*}
\left|\int_{\mathbb{R}^{N}}p_{\epsilon}(x,t)dx\right|\leq\int_{\mathbb{R}^{N}}|p_{\epsilon}(x,t)|dx\leq \frac{1}{\epsilon^{N/2}}\|u_{\epsilon}(\cdot, t)\|^{}_{L^{2}}\frac{1}{\epsilon^{N/2-1}}\|\nabla u_{\epsilon}(\cdot, t)\|^{}_{L^{2}}\leq C
\end{align*}
for every $t\in \mathbb{R}$, which completes the proof.
\end{proof}

%{\color{red}{$\bigstar$}}
\noindent
The following lemma will be useful later. For a proof see \cite[Lemma 3.3]{SK2}.
\begin{lemma} \label{L3}
Let $f\in C^{2}(\mathbb{R}^{N})$ be such that $\|f\|_{C^{2}(\mathbb{R}^{N})}<\infty$. Then for every $y\in \mathbb{R}^{N}$ fixed
\begin{equation*}
\int_{\mathbb{R}^{N}}f(\epsilon x+y) R^{2}(x)dx=f(y)\int_{\mathbb{R}^{N}}R^{2}(x)dx+\mathcal{O}(\epsilon^{2}), \quad \text{as $\epsilon\searrow 0$}, 
\end{equation*}
where $\mathcal{O}(\epsilon^{2})$ is independent of $y$.
\end{lemma}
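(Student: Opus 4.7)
The plan is to reduce the estimate to a straightforward second-order Taylor expansion of $f$ around $y$, exploiting the Gaussian decay and parity of $R$. First I would write, by Taylor's theorem with integral remainder, for every $x,y\in\mathbb{R}^{N}$ and $\epsilon>0$,
\begin{equation*}
f(\epsilon x+y)=f(y)+\epsilon\,\nabla f(y)\cdot x+\epsilon^{2}\,r(x,y,\epsilon),\qquad
r(x,y,\epsilon):=\int_{0}^{1}(1-t)\,\bigl\langle D^{2}f(y+t\epsilon x)\,x,x\bigr\rangle\,dt.
\end{equation*}
Since $\|f\|_{C^{2}(\mathbb{R}^{N})}<\infty$, the remainder satisfies the uniform bound $|r(x,y,\epsilon)|\leq \tfrac{1}{2}\|D^{2}f\|_{\infty}|x|^{2}$, independently of $y$ and $\epsilon$.

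Next I would multiply by $R^{2}(x)$ and integrate over $\mathbb{R}^{N}$. The constant term gives exactly $f(y)\int R^{2}\,dx$. The linear term vanishes: indeed, $R(x)=e^{(1+N)/2}e^{-|x|^{2}}$ is an even function of $x$, so $x\mapsto x\,R^{2}(x)$ is odd and integrable, hence
\begin{equation*}
\int_{\mathbb{R}^{N}}\nabla f(y)\cdot x\,R^{2}(x)\,dx=0
\end{equation*}
for every $y$. Finally the remainder contributes
\begin{equation*}
\Bigl|\,\epsilon^{2}\int_{\mathbb{R}^{N}}r(x,y,\epsilon)\,R^{2}(x)\,dx\,\Bigr|
\leq \frac{\epsilon^{2}}{2}\|D^{2}f\|_{\infty}\int_{\mathbb{R}^{N}}|x|^{2}R^{2}(x)\,dx,
\end{equation*}
and the last integral is finite by the Gaussian decay of $R$; moreover the bound is independent of $y$. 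Combining the three contributions yields the claim with an $\mathcal{O}(\epsilon^{2})$ error uniform in $y$.

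There is really no main obstacle here: the only non-trivial points are the vanishing of the first-order term by parity of $R^{2}$ and the fact that the remainder bound is uniform in $y$, both of which are immediate. The role of the $C^{2}$ hypothesis with bounded derivatives is precisely to make the remainder uniformly controlled, which in turn guarantees that the $\mathcal{O}(\epsilon^{2})$ constant does not depend on the centering point $y$.
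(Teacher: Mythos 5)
Your proof is correct and is essentially the standard argument that the paper outsources to \cite[Lemma 3.3]{SK2}: a second-order Taylor expansion of $f$ about $y$, cancellation of the first-order term by the evenness of $R^{2}$, and a remainder bound that is uniform in $y$ thanks to $\|D^{2}f\|_{\infty}<\infty$ and the finiteness of $\int_{\mathbb{R}^{N}}|x|^{2}R^{2}(x)\,dx$. Nothing is missing.
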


\begin{lemma} \label{L4}
For every $t\in \mathbb{R}^{+}$ we have 
\begin{equation*}
E_{\epsilon}(u_{\epsilon}(t))=\mathcal{E}(R)+m\mathcal{H}(t)+\mathcal{O}(\epsilon^{2})\quad \text{as $\epsilon\searrow 0$.}
\end{equation*}
\end{lemma}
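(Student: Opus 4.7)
The plan is to exploit the two conservation laws from Proposition \ref{PCS} and the conservation of the Hamiltonian \eqref{H}. Concretely, I would write
\begin{equation*}
E_{\epsilon}(u_{\epsilon}(t))=E_{\epsilon}(u_{\epsilon,0}), \qquad \mathcal{H}(t)=\mathcal{H}(0)=\tfrac12|v_{0}|^{2}+V(x_{0}),
\end{equation*}
so that it suffices to prove the asymptotic expansion
\begin{equation*}
E_{\epsilon}(u_{\epsilon,0})=\mathcal{E}(R)+m\bigl(\tfrac{1}{2}|v_{0}|^{2}+V(x_{0})\bigr)+\mathcal{O}(\epsilon^{2}) \quad \text{as } \epsilon\searrow 0.
\end{equation*}
Then the statement follows, since $\mathcal{H}$ is conserved along the solution of \eqref{HS}.

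To compute $E_{\epsilon}(u_{\epsilon,0})$, I would handle the three terms of the energy separately, always performing the change of variables $y=(x-x_{0})/\epsilon$. For the kinetic part, a direct differentiation of \eqref{ID} gives
\begin{equation*}
\nabla u_{\epsilon,0}(x)=e^{\frac{i}{\epsilon}x\cdot v_{0}}\Bigl[\tfrac{i}{\epsilon}v_{0}\,R\bigl(\tfrac{x-x_{0}}{\epsilon}\bigr)+\tfrac{1}{\epsilon}\nabla R\bigl(\tfrac{x-x_{0}}{\epsilon}\bigr)\Bigr].
\end{equation*}
Since $R$ is real, the cross term $\mathrm{Re}(\overline{iv_{0}R}\cdot\nabla R)$ vanishes identically, so
\begin{equation*}
|\nabla u_{\epsilon,0}|^{2}=\tfrac{1}{\epsilon^{2}}\bigl(|v_{0}|^{2}R^{2}+|\nabla R|^{2}\bigr)\bigl(\tfrac{x-x_{0}}{\epsilon}\bigr),
\end{equation*}
which after the change of variables yields exactly $\tfrac{1}{2\epsilon^{N-2}}\int|\nabla u_{\epsilon,0}|^{2}\,dx=\tfrac{1}{2}|v_{0}|^{2}m+\tfrac{1}{2}\|\nabla R\|_{L^{2}}^{2}$ (no error term). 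For the potential term, the same rescaling gives
\begin{equation*}
\tfrac{1}{\epsilon^{N}}\int_{\mathbb{R}^{N}}V(x)|u_{\epsilon,0}|^{2}dx=\int_{\mathbb{R}^{N}}V(\epsilon y+x_{0})R^{2}(y)\,dy,
\end{equation*}
and applying Lemma \ref{L3} (which is available because $V\in C^{2}$ is bounded with its derivatives by our standing hypothesis on $V$) produces $V(x_{0})m+\mathcal{O}(\epsilon^{2})$ uniformly in $x_{0}$. For the logarithmic term, the modulus of $u_{\epsilon,0}$ is just $R((x-x_{0})/\epsilon)$ so that the phase disappears, and rescaling gives
\begin{equation*}
-\tfrac{1}{\epsilon^{N}}\int_{\mathbb{R}^{N}}|u_{\epsilon,0}|^{2}\,\mathrm{Log}\,|u_{\epsilon,0}|^{2}\,dx=-\int_{\mathbb{R}^{N}}R^{2}(y)\,\mathrm{Log}\,R^{2}(y)\,dy,
\end{equation*}
which is exactly (minus) the logarithmic part of $\mathcal{E}(R)$, with no $\epsilon$-dependence at all.

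Collecting the three contributions reproduces $\mathcal{E}(R)+m\mathcal{H}(0)+\mathcal{O}(\epsilon^{2})$, and the conservation of $\mathcal{H}$ together with the conservation of the energy $E_{\epsilon}$ closes the argument. The only subtlety I anticipate is the uniformity of the $\mathcal{O}(\epsilon^{2})$ remainder coming from the potential term; this is precisely the point where Lemma \ref{L3} is invoked, which guarantees that the error is independent of $x_{0}$. Apart from this bookkeeping, the proof is a direct computation.
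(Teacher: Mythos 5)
Your proposal is correct and takes essentially the same route as the paper: conservation of $E_{\epsilon}$ and of $\mathcal{H}$, an explicit evaluation of $E_{\epsilon}(u_{\epsilon,0})$ after rescaling (using that $R$ is real so the cross term in the gradient vanishes), and Lemma \ref{L3} to expand the potential term as $mV(x_{0})+\mathcal{O}(\epsilon^{2})$. The only difference is that you spell out the term-by-term computation that the paper's proof leaves implicit.
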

\begin{proof}
Since $R$ is real, it follows by the energy conservation law
\begin{equation*}
E_{\epsilon}(u_{\epsilon}(t))=E_{\epsilon}(u_{\epsilon}(0))=m\frac{|v_{0}|^{2}}{2}+\mathcal{E}(R)
+\int_{\mathbb{R}^{N}}V(\epsilon x+x_{0})R^{2}(x)dx.
\end{equation*}
Next, from Lemma \ref{L3} we see that
\begin{equation*}
\int_{\mathbb{R}^{N}}V(\epsilon x+x_{0}) R^{2}(x)dx=mV(x(0))+\mathcal{O}(\epsilon^{2}), \quad \text{as $\epsilon\searrow 0$.}
\end{equation*}
But then, by the conservation law of the function $t\mapsto \mathcal{H}(t)$, we obtain
\begin{align*}
E_{\epsilon}(u_{\epsilon}(t))&=\mathcal{E}(R)+m\mathcal{H}(0)+\mathcal{O}(\epsilon^{2})\\
&=\mathcal{E}(R)+m\mathcal{H}(t)+\mathcal{O}(\epsilon^{2}), \quad \text{as 
	$\epsilon\searrow 0$.}
\end{align*}
This completes the proof.
\end{proof}

\noindent
In our analysis, we use the following property of the functional $\delta_{x}$ defined on the space $C^{2}(\mathbb{R}^{N})$ endowed with the standard $C^2$ norm: there exist three constants $K_{0},K_1>0$ and 
$K_{2}>1$ such that if $\|\delta_{y}-\delta_{z}\|_{C^{2\ast}}\leq K_{0}$ then 
\begin{equation}\label{Dp}
K_{1}|y-z|\leq \|\delta_{y}-\delta_{z}\|_{C^{2\ast}}\leq K_{2}|y-z|.
\end{equation}
Here, $C^{2\ast}$  is the dual space of $C^{2}(\mathbb{R}^{N})$. For a proof of such statement, see \cite[Lemma 3.2]{SK2}.

 \vskip4pt
 \noindent
Let $\rho$ be a positive constant defined by 
\begin{equation}\label{cp}
\rho:=K_{2}\sup_{t\in [0, T]} |x(t)|+K_{0},
\end{equation}
where $T>0$,  $x(t)$ is defined in \eqref{HS}, $K_{0}$ and $K_{2}$ are as in \eqref{Dp}.
Observe that, as $K_2>1$, we have $|x(t)|\leq \rho$ for every $t\in [0,T]$.
Furthermore, let $\chi\in C^{\infty}(\mathbb{R}^{N})$ be function such that
\begin{equation}\label{cha}
\chi(x)=1 \quad \text{if $|x|<\rho$,} \quad \chi(x)=0 \quad \text{if $|x|>2\rho$.}
\end{equation}

\vskip2pt
\noindent
Then we have the following.
\begin{lemma} \label{L8}
Let $u_{{\epsilon}}$ be the family of solutions to problem \eqref{0NL} with initial data \eqref{ID} and consider the functions $\sigma_{{\epsilon}}:\mathbb{R} \rightarrow\mathbb{R}^{N}$ and $\lambda_{{\epsilon}}:\mathbb{R}\rightarrow\mathbb{R}$ defined by  
\begin{align*}
\sigma_{{\epsilon}}(t)&=\int_{\mathbb{R}^{N}}p_{\epsilon}(x,t)dx-m\nu(t),\\
\lambda_{{\epsilon}}(t)&=mV(x(t))-\frac{1}{\epsilon^{N}}\int_{\mathbb{R}^{N}}\chi(x)V(x)|u_{{\epsilon}}(x,t)|^{2}dx,
\end{align*}
where $\nu(t)$ is defined in \eqref{HS} and $m=\|R\|^{2}_{L^{2}}$. Then $\sigma_{{\epsilon}}(t)$ and $\lambda_{{\epsilon}}(t)$ are continuous on $\mathbb{R}$ and satisfy $\sigma_{{\epsilon}}(0)=0$, $|\lambda_{{\epsilon}}(0)|=\mathcal{O}(\epsilon^{2})$ as 
$\epsilon\searrow 0$.
\end{lemma}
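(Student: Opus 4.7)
The plan is to verify the three claims separately.

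\emph{Continuity.} By Proposition \ref{PCS} and the embedding $W(\mathbb{R}^N) \hookrightarrow H^1(\mathbb{R}^N)$ from Lemma \ref{POii}, one has $u_\epsilon \in C(\mathbb{R}, H^1(\mathbb{R}^N))$, so both $u_\epsilon(\cdot,t)$ and $\nabla u_\epsilon(\cdot,t)$ depend continuously on $t$ in $L^2$. Cauchy--Schwarz then gives continuity of $t \mapsto \int_{\mathbb{R}^N} p_\epsilon(x,t)\,dx$, which combined with continuity of $\nu(t)$ from \eqref{HS} yields continuity of $\sigma_\epsilon$. For $\lambda_\epsilon$, only $L^2$-continuity of $u_\epsilon$ and the boundedness of $\chi V$ are needed, while continuity of $V(x(t))$ is immediate.

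\emph{Initial momentum.} A direct computation from \eqref{ID}, using that $R$ is real-valued, shows that $\overline{u_{\epsilon,0}}\,\nabla u_{\epsilon,0} = (iv_0/\epsilon)\, R^2((x-x_0)/\epsilon) + (1/\epsilon)\, R\,\nabla R((x-x_0)/\epsilon)$, and only the first term contributes to the imaginary part, since $R\,\nabla R$ is real. Rescaling $x \mapsto x_0 + \epsilon y$ then gives $\int_{\mathbb{R}^N} p_\epsilon(x,0)\,dx = v_0\int_{\mathbb{R}^N} R^2(y)\,dy = m\,v_0 = m\,\nu(0)$, so that $\sigma_\epsilon(0) = 0$.

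\emph{Initial potential term.} The change of variables $x = x_0 + \epsilon y$ transforms the defining integral into $\epsilon^{-N}\int \chi(x)V(x)|u_{\epsilon,0}|^2\,dx = \int \chi(x_0+\epsilon y)\,V(x_0+\epsilon y)\, R^2(y)\,dy$. Since $\chi \in C^\infty_c(\mathbb{R}^N)$ and $V \in C^3(\mathbb{R}^N)$ is bounded together with its derivatives, the product $f := \chi V$ satisfies $\|f\|_{C^2(\mathbb{R}^N)} < \infty$, and Lemma \ref{L3} yields $\int f(x_0 + \epsilon y)\,R^2(y)\,dy = f(x_0)\,m + \mathcal{O}(\epsilon^2)$. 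By the definition \eqref{cp} of $\rho$ we have $|x_0| = |x(0)| \le \rho$, hence $\chi(x_0) = 1$ by \eqref{cha}, and the right-hand side equals $m\,V(x(0)) + \mathcal{O}(\epsilon^2)$. Subtracting gives $|\lambda_\epsilon(0)| = \mathcal{O}(\epsilon^2)$.

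\emph{Main obstacle.} There is no genuine conceptual obstacle; the lemma merely records the fact that the initial datum \eqref{ID} is engineered so that its momentum equals $m v_0$ exactly and its mass concentrates at $x_0$ so tightly that the cutoff $\chi$ is transparent. The only point requiring attention is invoking \eqref{cp} to confirm that $x_0$ lies inside the region where $\chi \equiv 1$, so that the cutoff does not interfere with the expansion furnished by Lemma \ref{L3}.
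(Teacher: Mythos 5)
Your proof is correct and follows essentially the same route as the paper's (which only sketches these steps): continuity from the regularity of $u_\epsilon$, the vanishing of $\sigma_\epsilon(0)$ from the explicit computation of the momentum of the initial datum using that $R$ is real, and the $\mathcal{O}(\epsilon^2)$ bound on $\lambda_\epsilon(0)$ from Lemma \ref{L3} applied to $f=\chi V$ together with the observation that $\chi(x_0)=1$. Your write-up simply supplies the details the paper defers to the literature.
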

\begin{proof}
The continuity of $\sigma_{{\epsilon}}$ and $\lambda_{{\epsilon}}$ follow from the regularity properties of the solution $u_{\epsilon}$. Since $R$ is a real function,  it follows easily that $\sigma_{{\epsilon}}(0)=0$. Finally, it is not hard to prove, using the Lemma \ref{L3},  that $|\lambda_{{\epsilon}}(0)|=\mathcal{O}(\epsilon^{2})$ as $\epsilon$ goes to zero. See e.g.\cite[Lemma 3.7]{EMBPMS} for more details.
\end{proof}

\noindent
Define now
\begin{equation}\label{af}
\psi_{{\epsilon}}(x,t):=e^{-\frac{i}{\epsilon}\nu(t)\cdot[\epsilon x+x(t)]}u_{{\epsilon}}(\epsilon x+x(t),t),
\end{equation}
where $(x(t), \nu(t))$ is the solution to problem \eqref{HS}. Notice that $\psi_{{\epsilon}}\in \Sigma(\mathbb{R}^{N})$ for every $t\in \mathbb{R}$ and $\epsilon>0$. Moreover, the mass of $\psi_{{\epsilon}}$ is conserved. Indeed, by a change of variable we see that
\begin{equation}\label{MC}
\|\psi_{{\epsilon}}(t)\|^{2}_{L^{2}}=\|u_{{\epsilon}}(\epsilon x+x(t),t)\|^{2}_{L^{2}}=\frac{1}{\epsilon^{N}}\|u_{{\epsilon}}(t)\|^{2}_{L^{2}}=\|R\|^{2}_{L^{2}}.
\end{equation}

%{\color{red}{$\bigstar$}}

\begin{lemma} \label{L9}
For every $t\in \mathbb{R}$ and $\epsilon>0$,
\begin{equation*}
0\leq \mathcal{E}(\psi_{{\epsilon}}(t))-\mathcal{E}(R)\leq |\nu(t)||\sigma_{{\epsilon}}(t)|+|\lambda_{{\epsilon}}(t)|+\mathcal{O}(\epsilon^{2}),
\end{equation*}
where $\psi_{{\epsilon}}$ is defined in \eqref{af}.
\end{lemma}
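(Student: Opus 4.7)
The plan is as follows. The lower bound is immediate: since \eqref{af} is, up to a change of variables, an $L^{2}$-isometry, the identity \eqref{MC} gives $\|\psi_{\epsilon}(t)\|_{L^{2}}^{2}=\|R\|_{L^{2}}^{2}$, and Lemma \ref{L12} yields $\mathcal{E}(\psi_{\epsilon}(t))\geq \ell_{R}=\mathcal{E}(R)$.

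For the upper bound, I would compute $\mathcal{E}(\psi_{\epsilon}(t))$ directly via the change of variables $y=\epsilon x+x(t)$. Differentiating \eqref{af} and isolating the drift produced by the phase $e^{-i\nu(t)\cdot y/\epsilon}$, one finds
\begin{equation*}
|\nabla\psi_{\epsilon}(x,t)|^{2}=|\nu(t)|^{2}|u_{\epsilon}(y,t)|^{2}+\epsilon^{2}|\nabla u_{\epsilon}(y,t)|^{2}-2\epsilon\,\nu(t)\cdot\mathrm{Im}\bigl(\overline{u_{\epsilon}}\nabla u_{\epsilon}\bigr)(y,t).
\end{equation*}
Integrating in $x$ and changing variables, and using $|\psi_{\epsilon}(x,t)|=|u_{\epsilon}(y,t)|$ in the logarithmic term, a direct calculation yields
\begin{equation*}
\mathcal{E}(\psi_{\epsilon}(t))=\frac{m|\nu(t)|^{2}}{2}-\nu(t)\cdot\!\int_{\mathbb{R}^{N}}p_{\epsilon}(x,t)\,dx+E_{\epsilon}(u_{\epsilon}(t))-\frac{1}{\epsilon^{N}}\int_{\mathbb{R}^{N}}V(x)|u_{\epsilon}(x,t)|^{2}\,dx,
\end{equation*}
where I have recognized the kinetic-plus-logarithmic combination as $E_{\epsilon}(u_{\epsilon}(t))$ minus the potential term.

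Next I would invoke Lemma \ref{L4} (combined with $\mathcal{H}(t)=|\nu(t)|^{2}/2+V(x(t))$) to replace $E_{\epsilon}(u_{\epsilon}(t))$ by $\mathcal{E}(R)+m|\nu(t)|^{2}/2+mV(x(t))+\mathcal{O}(\epsilon^{2})$, and then use the bookkeeping identities $m|\nu(t)|^{2}-\nu(t)\cdot\int p_{\epsilon}\,dx=-\nu(t)\cdot\sigma_{\epsilon}(t)$ and $mV(x(t))-\epsilon^{-N}\int\chi(x)V(x)|u_{\epsilon}|^{2}\,dx=\lambda_{\epsilon}(t)$ from Lemma \ref{L8} to rewrite the result in the form
\begin{equation*}
\mathcal{E}(\psi_{\epsilon}(t))-\mathcal{E}(R)=-\nu(t)\cdot\sigma_{\epsilon}(t)+\lambda_{\epsilon}(t)-\frac{1}{\epsilon^{N}}\int_{\mathbb{R}^{N}}(1-\chi(x))V(x)|u_{\epsilon}(x,t)|^{2}\,dx+\mathcal{O}(\epsilon^{2}).
\end{equation*}
Since $V\geq 0$ may be assumed (as explained at the start of Section \ref{S:1}) and $0\leq 1-\chi\leq 1$ by \eqref{cha}, the tail integral is non-negative, so its negative can be discarded in the upper bound; Cauchy--Schwarz $|\nu(t)\cdot\sigma_{\epsilon}(t)|\leq |\nu(t)||\sigma_{\epsilon}(t)|$ then delivers the claim.

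The only delicate point is algebraic rather than analytic: one must keep track of the $\epsilon$-scalings and of the sign of the cross-term $-2\epsilon\,\nu(t)\cdot\mathrm{Im}(\overline{u_{\epsilon}}\nabla u_{\epsilon})$ so that the momentum and potential contributions collapse precisely to $\nu(t)\cdot\sigma_{\epsilon}(t)$ and $\lambda_{\epsilon}(t)$, and then spot that the tail region $|x|>\rho$ contributes with a favourable sign so that inserting the cutoff $\chi$ of \eqref{cha} is free in the upper bound. No further analytic input beyond Lemmas \ref{L4} and \ref{L12} is needed.
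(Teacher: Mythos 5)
Your proposal is correct and follows essentially the same route as the paper: the same expansion of $\mathcal{E}(\psi_{\epsilon}(t))$ via the change of variables and the momentum cross-term, the same substitution of Lemma \ref{L4} together with the bookkeeping identities for $\sigma_{\epsilon}$ and $\lambda_{\epsilon}$, and the same observation that the tail term $\epsilon^{-N}\int(1-\chi)V|u_{\epsilon}|^{2}\,dx$ is nonnegative and can be discarded. Your justification of the lower bound via \eqref{MC} and Lemma \ref{L12} is in fact slightly more explicit than the paper's, which leaves that step implicit.
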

\begin{proof}
We recall that $p_{\epsilon}(x,t)=\text{Im}(\overline{u_{\epsilon}(x,t)}\nabla u_{\epsilon}(x,t))/\epsilon^{N-1}$. By a change of variable, it follows 
\begin{align*}
\mathcal{E}(\psi_{{\epsilon}}(t))=&\frac{1}{2\epsilon^{N-2}}\int_{\mathbb{R}^{N}}\left|\nabla u_{\epsilon}\right|^{2}dx+\frac{1}{2}m|\nu(t)|^{2}
-\frac{1}{\epsilon^{N}}\int_{\mathbb{R}^{N}}\left|u_{\epsilon}\right|^{2}\mbox{Log}\left|u_{\epsilon}\right|^{2}dx,\\
&-\nu(t)\cdot\int_{\mathbb{R}^{N}}p_{\epsilon}(x,t)dx.
\end{align*}
Thus, taking into account the definition of $E_{\epsilon}$, we have
\begin{align*}
\mathcal{E}(\psi_{{\epsilon}}(t))=E_{\epsilon}(u_{\epsilon}(t))-\frac{1}{\epsilon^{N}}\int_{\mathbb{R}^{N}}V(x)|u_{\epsilon}(t,x)|^{2}dx+\frac{1}{2}m|\nu(t)|^{2}-\nu(t)\cdot\int_{\mathbb{R}^{N}}p_{\epsilon}(x,t)dx.
\end{align*}
Then, from Lemma \ref{L4} we get
\begin{align*}
\mathcal{E}(\psi_{{\epsilon}}(t))-\mathcal{E}(R)&=m\mathcal{H}(t)-\frac{1}{\epsilon^{N}}\int_{\mathbb{R}^{N}}V(x)|u_{\epsilon}(t,x)|^{2}dx\\
&+\frac{1}{2}m|\nu(t)|^{2}-\nu(t)\cdot\int_{\mathbb{R}^{N}}p_{\epsilon}(x,t)dx+\mathcal{O}(\epsilon^{2})\\
&\leq |\nu(t)||\sigma_{\epsilon}(t)|+|\lambda_{\epsilon}(t)|-\frac{1}{\epsilon^{N}}\int_{\mathbb{R}^{N}}(1-\chi(x))V(x)|u_{\epsilon}(t,x)|^{2}dx+\mathcal{O}(\epsilon^{2}).
\end{align*}
Since $V\geq 0$, it follows that 
\begin{equation*}
0\leq \mathcal{E}(\psi_{{\epsilon}}(t))-\mathcal{E}(R)\leq |\nu(t)||\sigma_{{\epsilon}}(t)|+|\lambda_{{\epsilon}}(t)|+\mathcal{O}(\epsilon^{2}),
\end{equation*}
which concludes the proof of lemma.
\end{proof}

\begin{proof}[\bf{Proof of Theorem \ref{T1}}] 
Set $\beta_{\epsilon}(t):=|\nu(t)||\sigma_{{\epsilon}}(t)|+|\lambda_{{\epsilon}}(t)|$ for $t\in \mathbb{R}$. From Lemma \ref{L8},  since $\sup_{t\in \mathbb{R}}|\nu(t)|<+\infty$, it follows that the function $\left\{t\rightarrow \beta_{\epsilon}(t)\right\}$ is continuous and $|\beta_{\epsilon}(0)|=\mathcal{O}(\epsilon^{2})$ as $\epsilon\searrow 0$. 
Let $\eta>0$. Let us fix a time $T_{0}>0$. 
Let $h>0$, depending on $\eta$, be as in Proposition \ref{P1}. Introduce the number 
\begin{equation}\label{Tim}
T^{\ast}_{\epsilon}:=\sup\left\{t\in [0, T_{0}]:\beta_{\epsilon}(\tau)\leq \frac{h}{2},
\,\,\, \text{for all $\tau\in [0,t]$}\right\}.
\end{equation}
Since $|\beta_{\epsilon}(0)|=\mathcal{O}(\epsilon^{2})$ it follows that $T^{\ast}_{\epsilon}>0,$ for any $\epsilon>0$  small. By choosing $\epsilon$ sufficiently small, from Lemma \ref{L9}, we get
for all $t\in [0,T^*_\epsilon)$
\begin{equation*}
\mathcal{E}(\psi_{{\epsilon}}(t))-\mathcal{E}(R)\leq \beta_{\epsilon}(t)+\mathcal{O}(\epsilon^{2})<h.
\end{equation*}
Since $\psi_{{\epsilon}}\in \Sigma(\mathbb{R}^{N})$ and $\|\psi_{{\epsilon}}(t)\|^{2}_{L^{2}}=\|R\|^{2}_{L^{2}}$,  by Proposition \ref{P1} 
there exist two families of uniformly bounded functions ${\theta^{\ast}_{\epsilon}}:\mathbb{R}\rightarrow \mathbb{R}$ and ${z_{\epsilon}}:\mathbb{R}\rightarrow \mathbb{R}^{N}$ such that
\begin{equation*}
\|e^{-\frac{i}{\epsilon}\nu(t)\cdot[\epsilon x+x(t)]}u_{\epsilon}(\epsilon x+x(t),t)-e^{i{\theta^{\ast}_{\epsilon}}(t)}R(x+{z_{\epsilon}}(t))\|^{2}_{H^{1}}<\eta^{2}
\end{equation*}
for every $t\in [0,T^{\ast}_{\epsilon})$. Finally, by rescaling and setting 
$$
\theta_{\epsilon}(t):=\epsilon{\theta^{\ast}_{\epsilon}}(t), \quad 
y_{\epsilon}(t):=x(t)-\epsilon z_{\epsilon}(t),
$$ 
we get formula~\eqref{Ap}. The proof of Theorem \ref{T1} is complete.
\end{proof}

%-------------------------------------------------------------------------------------------------

\section{Spectral analysis of linearization}\label{S:2}
The goal of this section is to prove Proposition \ref{DP2}. Before giving the proof, we need to establish some preliminary lemmas.

\begin{lemma} \label{L15}
Let $R$ be the Gausson \eqref{ID}. Then  there exist a positive constant $\delta$ such that for every $w\in  \Sigma(\mathbb{R}^{N})$ satisfying 
\begin{equation}\label{CHO}
\left(w, R\right)_{L^{2}}=\left(w, iR\right)_{L^{2}}
=\left(w, \frac{\partial R}{\partial x_{j}}\right)_{L^{2}}=0,\quad \text{for all $j=1$, \ldots $N$}
\end{equation}
we have 
\begin{equation*}
\left\langle S^{\prime\prime}({R})w, w\right\rangle\geq \delta \|\omega\|^{2}_{\Sigma(\mathbb{R}^{N})},
\end{equation*}
where the functional $S$ is defined in \eqref{SF}.
\end{lemma}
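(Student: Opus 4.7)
The plan is to compute $S''(R)$ explicitly, observe that on the real/imaginary decomposition $w=w_{1}+iw_{2}$ it diagonalises into two shifts of the isotropic harmonic oscillator, and then read off the coercivity from its spectrum once the orthogonality conditions \eqref{CHO} are imposed.

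First I would compute the second variation of $S$ at the (real, strictly positive) point $R$. A direct differentiation of $\tfrac{1}{2}|u|^{2}\log|u|^{2}$ twice in $(u_{1},u_{2})$ shows that the mixed derivative carries a factor $u_{1}u_{2}$ and hence vanishes at $u_{2}=0$, so the real/imaginary cross terms of the Hessian are zero and
$$
\langle S''(R)w,w\rangle=\langle L_{+}w_{1},w_{1}\rangle+\langle L_{-}w_{2},w_{2}\rangle,
$$
where, inserting $\log R^{2}=N+1-2|x|^{2}$,
$$
L_{+}=-\tfrac{1}{2}\Delta+2|x|^{2}-(N+2),\qquad L_{-}=-\tfrac{1}{2}\Delta+2|x|^{2}-N.
$$

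Next I would exploit that both $L_{\pm}$ are constant shifts of the isotropic harmonic oscillator $H=-\tfrac{1}{2}\Delta+2|x|^{2}$, whose natural form domain is exactly $\Sigma(\mathbb{R}^{N})$ and whose $L^{2}$-spectrum $\{2n+N:n\in\mathbb{N}_{0}\}$ is diagonalised by the standard Hermite basis. The ground state is proportional to $R$ and the first excited eigenspace equals $\mathrm{span}\{\partial_{x_{j}}R\}_{j=1}^{N}$, so $\mathrm{spec}(L_{+})=\{-2,0,2,\dots\}$ with $-2$ attained at $R$ and $0$ of multiplicity $N$ attained on $\mathrm{span}\{\partial_{x_{j}}R\}$, while $\mathrm{spec}(L_{-})=\{0,2,4,\dots\}$ with $0$ simple and attained at $R$. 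Since $(w,iR)_{L^{2}}=\int w_{2}R$ and $R,\partial_{x_{j}}R$ are real, the conditions in \eqref{CHO} translate precisely into $w_{1}\perp R$, $w_{1}\perp\partial_{x_{j}}R$ for every $j$, and $w_{2}\perp R$; these span exactly the non-positive spectral subspaces, so the spectral theorem gives $\langle L_{+}w_{1},w_{1}\rangle\geq 2\|w_{1}\|_{L^{2}}^{2}$ and $\langle L_{-}w_{2},w_{2}\rangle\geq 2\|w_{2}\|_{L^{2}}^{2}$, whence $\langle S''(R)w,w\rangle\geq 2\|w\|_{L^{2}}^{2}$.

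To upgrade this $L^{2}$-coercivity to the claimed $\Sigma$-coercivity I would use the explicit shape of $L_{\pm}$. Rearranging,
$$
\tfrac{1}{2}\|\nabla w_{k}\|_{L^{2}}^{2}+2\||x|w_{k}\|_{L^{2}}^{2}=\langle L_{\pm}w_{k},w_{k}\rangle+C_{N}\|w_{k}\|_{L^{2}}^{2}\qquad(C_{N}\in\{N,N+2\}),
$$
and combining this with the $L^{2}$-bound above gives $\|w_{k}\|_{\Sigma(\mathbb{R}^{N})}^{2}\leq C\langle L_{\pm}w_{k},w_{k}\rangle$ for some $C=C(N)>0$; summing $k=1,2$ yields the lemma with an explicit $\delta>0$. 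In my view the main delicate point here is not the spectral calculation (which is rigid) but the functional-analytic framework: justifying that $S''(R)$ is a well-defined bounded symmetric bilinear form on $\Sigma(\mathbb{R}^{N})$ (this is reasonable because $R>0$ pointwise, so the logarithmic singularity is invisible at this specific base point) and justifying the integrations by parts which relate the second variation to the quadratic forms of $L_{\pm}$ on $\Sigma$.
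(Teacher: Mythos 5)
Your proposal is correct, and it follows the same overall architecture as the paper: split $w=w_1+iw_2$, observe that the Hessian at the real positive point $R$ has no real/imaginary cross terms, identify $L_{+}=-\tfrac12\Delta+2|x|^2-(N+2)$ and $L_{-}=-\tfrac12\Delta+2|x|^2-N$, and then prove coercivity of each block under the corresponding orthogonality conditions (the paper's Lemmas \ref{L16} and \ref{L17}). Where you genuinely diverge is in how the two key steps are executed. For the spectral input, the paper argues abstractly: it checks $L_{-}R=0$ and $L_{+}R=-2R$, invokes positivity of $R$ to get simplicity of the bottom eigenvalue, cites \cite{MAS} for the fact that $-2$ is the only negative eigenvalue of $L_{+}$ and \cite{PMS} for $\mathrm{ker}(L_{+})=\mathrm{span}\{\partial_{x_j}R\}$, and uses the min--max principle for the gap. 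You instead read everything off the explicit Hermite diagonalization of the shifted isotropic oscillator $-\tfrac12\Delta+2|x|^2$, whose spectrum $\{2n+N\}$ and eigenspaces are known in closed form; this is self-contained, gives the gap constant $2$ explicitly, and makes the identification of the non-positive spectral subspaces with $\mathrm{span}\{R\}\oplus\mathrm{span}\{\partial_{x_j}R\}$ immediate rather than a cited fact. For the upgrade from $L^2$-coercivity to $\Sigma$-coercivity, the paper runs a contradiction/compactness argument (a minimizing sequence for $\tau=\inf\{\langle L_{-}v,v\rangle:\|v\|_{\Sigma}=1,\ (v,R)_{L^2}=0\}$ combined with the compact embedding $\Sigma\hookrightarrow L^2$), which yields no explicit constant; your direct rearrangement
\begin{equation*}
\tfrac12\|\nabla w_k\|_{L^2}^2+2\||x|w_k\|_{L^2}^2=\langle L_{\pm}w_k,w_k\rangle+C_N\|w_k\|_{L^2}^2\le\Bigl(1+\tfrac{C_N}{2}\Bigr)\langle L_{\pm}w_k,w_k\rangle
\end{equation*}
is shorter, quantitative, and avoids compactness altogether. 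The trade-off is that the paper's compactness argument is the one that survives perturbations (a non-quadratic potential term), whereas yours leans on the accidental fact that the linearization at the Gausson is exactly a harmonic oscillator. Your closing caveat about justifying that $S''(R)$ is a bounded symmetric form on $\Sigma(\mathbb{R}^N)$ is exactly the point the paper also takes for granted (it asserts boundedness of $L_{\pm}:\Sigma\to\Sigma'$ without proof), and your justification -- that $R>0$ everywhere so the logarithmic singularity is invisible at this base point -- is the right one.
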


We set $w=u+iv$ for real valued functions $u$, $v\in \Sigma(\mathbb{R}^{N}, \mathbb{R})$. Then it is not hard to show that $S^{\prime\prime}({R})$ can be separated into a real and imaginary part $L_{+}$ and $L_{-}$ such that
\begin{equation*}
\left\langle S^{\prime\prime}({R})w, w\right\rangle=\left\langle L_{+}u,u\right\rangle+\left\langle L_{-}v,v\right\rangle,
\end{equation*}
where $L_{+}$ and $L_{-}$ are two bounded operator on $\Sigma(\mathbb{R}^{N})$ with values in $\Sigma^{\prime}(\mathbb{R}^{N})$ and given by
\begin{align*}
L_{+}u&=-\frac{1}{2}\Delta u+2|x|^{2}u-(N+2)u,\\
L_{-}v&=-\frac{1}{2}\Delta v+2|x|^{2}v-Nv.
\end{align*}
Indeed,  let $f\in \Sigma(\mathbb{R}^{N})$.  Then we see that
\begin{align*}
S^{\prime\prime}(f)w&=-\frac{1}{2}\Delta w+ w-\mathrm{Log}\left|f \right|^{2}w-2\frac{f}{|f|^{2}}\mbox{Re}(f \overline{w}).
\end{align*}
Now recalling the definition of $R$ given in \eqref{ID} we get 
\begin{align*}
S^{\prime\prime}(R)w&=-\frac{1}{2}\Delta w+ w-\mathrm{Log}\left|R \right|^{2}w-2\mbox{Re}(\overline{w})\\
&=-\frac{1}{2}\Delta w- Nw+2|x|^{2}w-2\mbox{Re}(\overline{w})\\
&=L_{+}u+iL_{-}v.
\end{align*}
The above lemma follows immediately from the two following lemmas.
\begin{lemma} \label{L16} 
There exists a positive constant $\delta_{1}$ such that for every $v\in  \Sigma(\mathbb{R}^{N})$ satisfying 
\begin{equation*}
\left(v, R\right)_{L^{2}}=0
\end{equation*}
we have $\left\langle L_{-}v, v\right\rangle\geq \delta_{1} \|v\|^{2}_{\Sigma(\mathbb{R}^{N})}$.\\
\end{lemma}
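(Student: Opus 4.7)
The operator $L_{-}v=-\tfrac{1}{2}\Delta v+2|x|^{2}v-Nv$ is (a rescaling of) the quantum harmonic oscillator, and the Gausson $R$ is, up to a multiplicative constant, its ground state. My plan is to (i) identify the spectrum of $L_{-}$, (ii) use the orthogonality hypothesis to get a spectral gap bound, and (iii) absorb the quadratic form of $L_{-}$ into the full $\Sigma$-norm.

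First, I would verify directly that $L_{-}R=0$. Since $\nabla R=-2xR$ and $\Delta R=(-2N+4|x|^{2})R$, one has $-\tfrac{1}{2}\Delta R+2|x|^{2}R-NR=0$, so $R\in\ker L_{-}$. Next, via the change of variables $v(x)=u(\sqrt{2}\,x)$, the eigenvalue problem $L_{-}v=\mu v$ is transformed into $(-\Delta+|y|^{2})u=(\mu+N)u$. The harmonic oscillator $-\Delta+|y|^{2}$ on $\mathbb{R}^{N}$ has pure point spectrum $\{N+2k:k\in\mathbb{N}_{0}\}$ with the Hermite functions as a complete orthonormal basis of $L^{2}(\mathbb{R}^{N})$, and its natural form domain is exactly $\Sigma(\mathbb{R}^{N})$. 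Hence the spectrum of $L_{-}$ is $\{2k:k\in\mathbb{N}_{0}\}$, and the one-dimensional kernel is spanned precisely by the Gaussian $e^{-|x|^{2}}$, i.e.\ by $R$.

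By the spectral theorem applied to the self-adjoint extension of $L_{-}$ on $L^{2}(\mathbb{R}^{N})$ with form domain $\Sigma(\mathbb{R}^{N})$, for every $v\in\Sigma(\mathbb{R}^{N})$ with $(v,R)_{L^{2}}=0$ one has the gap estimate
\begin{equation*}
\langle L_{-}v,v\rangle\geq 2\,\|v\|_{L^{2}}^{2}.
\end{equation*}
On the other hand, by the explicit expression of the quadratic form,
\begin{equation*}
\langle L_{-}v,v\rangle=\tfrac{1}{2}\|\nabla v\|_{L^{2}}^{2}+2\,\||x|v\|_{L^{2}}^{2}-N\|v\|_{L^{2}}^{2}.
\end{equation*}
Combining these two identities gives
\begin{equation*}
\|\nabla v\|_{L^{2}}^{2}+\||x|v\|_{L^{2}}^{2}\leq 2\,\langle L_{-}v,v\rangle+2N\|v\|_{L^{2}}^{2}\leq (N+2)\,\langle L_{-}v,v\rangle,
\end{equation*}
and adding $\|v\|_{L^{2}}^{2}\leq\tfrac{1}{2}\langle L_{-}v,v\rangle$ yields $\|v\|_{\Sigma}^{2}\leq (N+5/2)\,\langle L_{-}v,v\rangle$. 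This establishes the claim with $\delta_{1}=(N+5/2)^{-1}$.

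The main (essentially only) technical point is the spectral identification: one has to justify that $L_{-}$, defined on $\Sigma(\mathbb{R}^{N})$ with values in $\Sigma'(\mathbb{R}^{N})$, corresponds to a genuine self-adjoint operator on $L^{2}$ whose spectral resolution is given by Hermite functions, so that the gap $2$ is attained on the $L^{2}$-orthogonal complement of $R$. This is classical for the harmonic oscillator, and the rescaling $y=\sqrt{2}\,x$ transfers it verbatim to $L_{-}$; no further subtlety arises because $R$ is a smooth, strictly positive, rapidly decaying ground state and $\Sigma(\mathbb{R}^{N})$ is exactly the form domain.
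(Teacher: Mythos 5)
Your proof is correct, but it takes a genuinely different route from the paper's. The paper establishes the $L^{2}$ spectral gap only qualitatively: since $L_{-}R=0$ and $R>0$, zero is the simple ground-state eigenvalue, and the min--max principle yields \emph{some} $\kappa>0$ (the second eigenvalue) with $\left\langle L_{-}v,v\right\rangle\geq\kappa\|v\|_{L^{2}}^{2}$ on the orthogonal complement of $R$; it then upgrades this to coercivity in $\Sigma(\mathbb{R}^{N})$ by contradiction, taking a minimizing sequence for $\tau=\inf\{\left\langle L_{-}v,v\right\rangle:\|v\|_{\Sigma(\mathbb{R}^{N})}=1,\ (v,R)_{L^{2}}=0\}$, passing to a weak limit in $\Sigma(\mathbb{R}^{N})$ which is a strong $L^{2}$ limit by the compact embedding $\Sigma(\mathbb{R}^{N})\hookrightarrow L^{2}(\mathbb{R}^{N})$, and deriving the incompatible conclusions $v\equiv 0$ and $\|v\|_{L^{2}}^{2}\geq 1/(2N+2)$. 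You instead identify $L_{-}$ explicitly as a rescaled harmonic oscillator, which gives the sharp gap $\kappa=2$ together with $\mathrm{ker}(L_{-})=\mathrm{span}\{R\}$, and you pass from the $L^{2}$ gap to the $\Sigma$-bound by a purely algebraic manipulation of the quadratic form, obtaining the explicit constant $\delta_{1}=(N+5/2)^{-1}$. Your route is more elementary (no compactness or contradiction argument) and fully quantitative; the paper's route is softer and would survive perturbations of the potential term for which the spectrum is not explicitly computable. The one point that must be justified --- and which you do flag --- is that the gap inequality holds for all $v$ in the form domain $\Sigma(\mathbb{R}^{N})$ orthogonal to $R$, not merely on the operator domain; this is standard for the harmonic oscillator because the Hermite basis diagonalizes the quadratic form on its form domain.
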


\begin{lemma} \label{L17} 
There exists a positive constant $\delta_{2}$ such that for every $u\in  \Sigma(\mathbb{R}^{N})$ satisfying 
\begin{equation*}
\left(u, R\right)_{L^{2}}=\left(u, \frac{\partial R}{\partial x_{j}}\right)_{L^{2}}=0\quad \text{for all $j=1$, \ldots $N$}
\end{equation*}
we have $\left\langle L_{+}u, u\right\rangle\geq \delta_{2} \|u\|^{2}_{\Sigma(\mathbb{R}^{N})}$.
\end{lemma}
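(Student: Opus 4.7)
The plan is to recognise $L_+=-\tfrac12\Delta+2|x|^2-(N+2)$ as a shifted, rescaled isotropic harmonic oscillator and exploit its completely explicit spectral decomposition. Setting $\mathcal{H}:=-\tfrac12\Delta+2|x|^2$, so that $L_+=\mathcal{H}-(N+2)I$, the change of variables $u(x)=v(\sqrt{2}\,x)$ conjugates $\mathcal{H}$, up to the multiplicative factor $2$, to the standard harmonic oscillator $-\tfrac12\Delta+\tfrac12|x|^2$. Therefore $\mathcal{H}$ extends to a self-adjoint operator on $L^2(\mathbb{R}^N)$ with form domain $\Sigma(\mathbb{R}^N)$, has compact resolvent, and its spectrum consists of the eigenvalues $\{N+2|k|:k\in\mathbb{N}^N\}$; the associated eigenfunctions are rescaled Hermite functions and form a complete orthonormal basis of $L^2(\mathbb{R}^N)$. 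Hence
\[
\sigma(L_+)=\{-2,\,0,\,2,\,4,\,\dots\}.
\]

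Next I would identify the low-lying eigenspaces explicitly. A direct computation, already implicit in the calculations preceding the statement, gives $L_+R=-2R$, so $R$ spans the simple ground-state eigenspace. Using $\partial_jR=-2x_jR$ one likewise checks $L_+\partial_jR=0$ for every $j=1,\ldots,N$; since the first excited eigenspace of $\mathcal{H}$ has multiplicity exactly $N$ by the Hermite count, and the $N$ functions $\partial_jR$ are linearly independent (and $L^2$-orthogonal to $R$ by parity), they span $\ker L_+$. All remaining eigenvalues of $L_+$ are at least $2$.

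The hypotheses of the lemma then say that $u\in\Sigma(\mathbb{R}^N)$ is $L^2$-orthogonal to $\operatorname{span}\{R,\partial_1R,\dots,\partial_NR\}$ and therefore lies in the spectral subspace of $L_+$ associated with eigenvalues $\geq 2$. The spectral theorem yields
\[
\langle L_+u,u\rangle\;\geq\;2\|u\|_{L^2}^2.
\]
To promote this $L^2$-coercivity to the full $\Sigma$-norm I would use a convex combination with the explicit expression of the quadratic form. Writing
\[
\langle L_+u,u\rangle=\tfrac12\|\nabla u\|_{L^2}^2+2\,\||x|u\|_{L^2}^2-(N+2)\|u\|_{L^2}^2,
\]
and splitting $\langle L_+u,u\rangle=(1-\theta)\langle L_+u,u\rangle+\theta\langle L_+u,u\rangle$ for $\theta\in(0,\,2/(N+4))$ gives
\[
\langle L_+u,u\rangle\;\geq\;\bigl[2-(N+4)\theta\bigr]\|u\|_{L^2}^2+\theta\Bigl[\tfrac12\|\nabla u\|_{L^2}^2+2\,\||x|u\|_{L^2}^2\Bigr].
\]
Choosing for instance $\theta=1/(N+4)$ produces the claim with $\delta_2:=1/(2(N+4))$.

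The main obstacle is the second step: one has to be sure that the orthogonality conditions of the lemma eliminate \emph{exactly} the non-positive part of $\sigma(L_+)$. This reduces to the non-degeneracy of the oscillator ground state, together with the Hermite multiplicity count for the first excited level, both of which are classical; the only care required is the matching of the form domain of $L_+$, namely $\Sigma(\mathbb{R}^N)$, with the domain on which the Hermite expansion is used. Once this is in place, the remainder of the argument is routine.
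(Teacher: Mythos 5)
Your proof is correct, and it takes a genuinely different route from the paper's. You exploit the fact that $L_{+}=-\tfrac12\Delta+2|x|^{2}-(N+2)$ is exactly a shifted, rescaled harmonic oscillator, so the full Hermite diagonalization gives $\sigma(L_{+})=\{-2,0,2,4,\dots\}$ with the ground eigenspace spanned by $R$, the zero eigenspace spanned by the $\partial_{j}R$ (by the multiplicity-$N$ count at the first excited level), and a spectral gap of size $2$ above the kernel; the orthogonality conditions then yield $\langle L_{+}u,u\rangle\geq 2\|u\|_{L^{2}}^{2}$ at once, and your convex-combination trick with $\theta=1/(N+4)$ converts this into $\Sigma$-coercivity with the explicit constant $\delta_{2}=1/(2(N+4))$. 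The paper instead imports the spectral facts from external references (Shubin for the statement that $-2$ is the only negative eigenvalue, and d'Avenia--Montefusco--Squassina for the identification of $\ker L_{+}$, recorded as Lemma \ref{L18}), deduces only the qualitative positivity $\langle L_{+}u,u\rangle>0$ on the orthogonal complement, and then upgrades it to coercivity by the same contradiction-plus-compactness argument used for $L_{-}$ (a minimizing sequence for the normalized infimum, weak convergence in $\Sigma(\mathbb{R}^{N})$ and the compact embedding into $L^{2}(\mathbb{R}^{N})$), which produces no explicit constant. Your argument is more self-contained and quantitative, at the price of using the very special oscillator structure of this particular linearization; the paper's softer argument would survive in settings where $L_{+}$ is not explicitly diagonalizable. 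One small point of hygiene, which is really an ambiguity inherited from the paper's statement: the lemma is applied to the \emph{real} part of $w=u+iv$, and your spectral argument (like the paper's) should be read as acting on real-valued $u$, since for a genuinely complex $u$ the real-inner-product conditions $(u,R)_{L^{2}}=(u,\partial_{j}R)_{L^{2}}=0$ do not constrain the imaginary part; it is worth stating that $u$ is real-valued before invoking the Hermite expansion.
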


Before giving the proof of Lemmas \ref{L16} and \ref{L17},  we discuss some spectral properties of $L_{-}$ and $L_{+}$. First, since $|x|^{2}\rightarrow +\infty$ as $|x|$ goes to $+\infty$, the operators $L_{-}$ and $L_{+}$ have only  discrete spectrum, i.e.\ $\sigma(L_{\pm})=\sigma_{p}(L_{\pm})=\left\{\lambda^{\pm}_{i}\right\}_{i\in \mathbb{N}}$. Moreover, the corresponding eigenvalues $\lambda^{\pm}_{i}$ tending to $+\infty$ and the eigenfunctions are exponentially decaying as $|x|\rightarrow +\infty$. 

\vskip4pt
\noindent
Now we give the proof of Lemma \ref{L16}.
\begin{proof}[\bf{Proof of Lemma \ref{L16}}] 
We claim that there exists $\kappa>0$  such that for every $v\in  \Sigma(\mathbb{R}^{N})\setminus\left\{0\right\}$ satisfying $\left(v, R\right)^{}_{L^{2}}=0$, we have 
\begin{equation}\label{AXIl}
\left\langle L_{-}v, v\right\rangle\geq \kappa \|v\|^{2}_{L^{2}}.
\end{equation}
Indeed, it is not hard to show that $L_{-}(R)=0$. Since $R>0$, it follows that $0$  is the first simple eigenvalue of $L_{-}$ (see \cite[Chapter 3]{EBS}). Next let $v\in \Sigma(\mathbb{R}^{N})\setminus\left\{0\right\}$  with $\left(v, R\right)^{}_{L^{2}}=0$. From the min-max characterization of eigenvalues (see \cite[Supplement 1]{EBS}) there exist $\kappa>0$ such that $\left\langle L_{-}v, v\right\rangle\geq \kappa \|v\|^{2}_{L^{2}}$. Notice that $\kappa$ is the second eigenvalue of $L_{-}$. This proves our claim. 

\vskip2pt
\noindent
Now, let us set
\begin{equation*}
\tau=\inf\left\{\left\langle L_{-}v, v\right\rangle: \|v\|^{2}_{\Sigma(\mathbb{R}^{N})}=1, \,\, \left(v, R\right)_{L^{2}}=0\right\},
\end{equation*}
and assume by contradiction that $\tau=0$. Let $\left\{v_{n}\right\}_{n\in\mathbb N}$ be a minimizing sequence for $\tau$. Then since  $\|v_{n}\|_{\Sigma(\mathbb{R}^{N})}=1$, we can assume that the sequence converges weakly in $\Sigma(\mathbb{R}^{N})$ to some $v$. Furthermore, since the embedding $\Sigma(\mathbb{R}^{N})\hookrightarrow L^{2}(\mathbb{R}^{N})$ is compact, it follows that $v_{n}\rightarrow v$ 
strongly in $L^{2}(\mathbb{R}^{N})$ as $n\to\infty$. In particular, $\left(v, R\right)^{}_{L^{2}}=0$ and 
\begin{equation*}
0\leq\left\langle L_{-}v, v\right\rangle\leq\liminf_{n\rightarrow\infty}\left\langle L_{-}v_{n}, v_{n}\right\rangle=0.
\end{equation*}
This implies that $\left\langle L_{-}v, v\right\rangle=0$ and, since $\left(v, R\right)^{}_{L^{2}}=0$, it follows from \eqref{AXIl} that $v\equiv 0$.  On the other hand,
\begin{align*}
\lim_{n\rightarrow\infty}\|v_{n}\|^{2}_{L^{2}}&=\lim_{n\rightarrow\infty}\left\{\|v_{n}\|^{2}_{L^{2}}+\left\langle L_{-}v_{n}, v_{n}\right\rangle \right\}\geq \lim_{n\rightarrow\infty}\left\{\frac{1}{2}\|v_{n}\|^{2}_{\Sigma(\mathbb{R}^{N})}-N\|v_{n}\|^{2}_{L^{2}}\right\}\\
&=\frac{1}{2}-N\lim_{n\rightarrow\infty}\|v_{n}\|^{2}_{L^{2}}.
\end{align*}
Therefore, $\|v\|^{2}_{L^{2}}\geq 1/(2N+2)$,  which is a contradiction to the fact that $v\equiv 0$. This completes of proof of lemma.
\end{proof}

We now turn our attention to $L_{+}$. In order to prove Lemma \ref{L17}, we first establish the following.
\begin{lemma} \label{L18} 
The following assertions hold.\\
{\rm (i)} The operator $L_{+}$ has only one negative eigenvalue. The unique negative simple
eigenvalue equals $-2$, and $R$ is the corresponding eigenfunction.\\
{\rm (ii)} The second eigenvalue of $L_{+}$ is $0$ and 
\begin{equation*}
\mathrm{ker}(L_{+})=\mathrm{span}\left\{\frac{\partial R}{\partial x_{j}}: j=1,\ldots N\right\}. 
\end{equation*}
\end{lemma}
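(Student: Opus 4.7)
The plan is to diagonalize $L_{+}$ explicitly by recognizing it, up to an additive shift and a linear rescaling, as the standard quantum harmonic oscillator. Since the paper has already recorded that $L_{+}$ has purely discrete spectrum accumulating at $+\infty$, identifying the full spectrum will immediately deliver both (i) and (ii) together with the desired multiplicities.

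I would begin with a direct verification of the two claimed eigenrelations. With $R(x)=e^{(N+1)/2} e^{-|x|^{2}}$ one has $\Delta R = (4|x|^{2}-2N)R$, and hence
\begin{equation*}
L_{+} R = -\tfrac{1}{2}(4|x|^{2}-2N)R + 2|x|^{2} R - (N+2)R = -2R,
\end{equation*}
which already shows $-2\in\sigma(L_{+})$ with $R\in\ker(L_{+}+2)$. Setting $\varphi_{j}:=x_{j}R$ and using
\begin{equation*}
\Delta \varphi_{j} = 2\,\partial_{j} R + x_{j}\Delta R = -(2N+4)\varphi_{j} + 4|x|^{2}\varphi_{j},
\end{equation*}
a short computation gives $L_{+}\varphi_{j}=0$. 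Since $\partial_{j} R = -2\varphi_{j}$, the $N$ vectors $\{\partial_{j} R\}_{j=1}^{N}$ belong to $\ker L_{+}$ and are visibly linearly independent, so it only remains to rule out any further nonpositive eigenvalues and to show that the two eigenspaces identified above are already exhausted.

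For this I would identify the complete spectrum by separation of variables. The operator $L_{+}+(N+2) = -\tfrac12\Delta + 2|x|^{2}$ is the sum of the commuting one-dimensional operators $A_{j}=-\tfrac12\partial_{j}^{2}+2x_{j}^{2}$. Each $A_{j}$ is unitarily equivalent, via the dilation $x_{j}\mapsto x_{j}/\sqrt{2}$, to the standard harmonic oscillator $-\partial^{2}+y^{2}$ on $L^{2}(\mathbb{R})$, whose spectrum is $\{2n+1 : n\in\mathbb{N}_{0}\}$, each eigenvalue simple, with the Hermite functions forming a complete orthonormal basis of $L^{2}(\mathbb{R})$. Tensorizing over the $N$ coordinates, $-\tfrac12\Delta+2|x|^{2}$ has full spectrum $\{N+2|n|:n\in\mathbb{N}_{0}^{N}\}$, so that
\begin{equation*}
\sigma(L_{+}) = \{\,2(k-1) : k\in\mathbb{N}_{0}\,\},
\end{equation*}
with the $(2(k-1))$-eigenspace of dimension equal to the number of $n\in\mathbb{N}_{0}^{N}$ with $|n|=k$, namely $\binom{N+k-1}{N-1}$.

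Reading off this list now gives both conclusions at once: the case $k=0$ produces the unique negative eigenvalue $-2$, simple, with Gaussian eigenfunction that must coincide with $R$ up to a scalar, which is (i); the case $k=1$ produces the second eigenvalue $0$ with multiplicity exactly $N$, and since $\{\partial_{j} R\}_{j=1}^{N}$ already provides $N$ linearly independent elements of $\ker L_{+}$, these must span it, which is (ii); all cases $k\geq 2$ produce strictly positive eigenvalues. The only nontrivial ingredient is the tensor-product / completeness argument that the Hermite products exhaust an orthonormal basis of $L^{2}(\mathbb{R}^{N})$, but this is classical and, combined with the discreteness of $\sigma(L_{+})$ already established just above the statement, forces the identification of the spectrum to be complete.
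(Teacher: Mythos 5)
Your proof is correct, but it takes a genuinely different and more self-contained route than the paper. The paper verifies the two explicit eigenrelations $L_{+}R=-2R$ and $L_{+}(\partial R/\partial x_{j})=0$ exactly as you do, but then outsources the two hard points: it invokes Shubin's book (Example 4.5 and Section 4.2 of \cite{MAS}) to conclude that $-2$ is the \emph{only} negative eigenvalue, and it cites \cite[Theorem 1.3]{PMS} for the reverse inclusion $\mathrm{ker}(L_{+})\subseteq\mathrm{span}\{\partial R/\partial x_{j}\}$. You instead observe that $L_{+}+(N+2)=-\tfrac12\Delta+2|x|^{2}$ is a rescaled harmonic oscillator, diagonalize it completely by tensorizing the one-dimensional Hermite bases, and read off the entire spectrum $\sigma(L_{+})=\{2(k-1):k\in\mathbb{N}_{0}\}$ with the correct multiplicities $\binom{N+k-1}{N-1}$; the computations ($\Delta R=(4|x|^{2}-2N)R$, $L_{+}(x_{j}R)=0$, $\partial_{j}R=-2x_{j}R$, and the dilation $x\mapsto x/\sqrt{2}$ normalizing $-\tfrac12\partial^{2}+2x^{2}$ to $-\partial^{2}+y^{2}$ with spectrum $\{2n+1\}$) all check out. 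What your approach buys is that the two citations become unnecessary: completeness of the Hermite functions plus the already-established discreteness of $\sigma(L_{+})$ forces both the uniqueness of the negative eigenvalue and the exact dimension $N$ of the kernel in one stroke, and in fact yields strictly more (the full spectrum with multiplicities). What the paper's route buys is brevity and robustness: the cited results apply to the linearization of the genuine logarithmic problem without requiring the lucky accident that the Gausson turns the potential term into an exact quadratic well, so the paper's argument would survive perturbations of the model where yours would not.
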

\begin{proof}
We remark that $L_{+}(R)=-2R$. But then, since $R>0$, it follows that $-2$ is the first simple eigenvalue of $L_{+}$. From \cite[Example 4.5 and Section 4.2]{MAS}, we see that $-2$ is the only negative eigenvalue. On the other hand, it is clear that $0$ is the next eigenvalue. Indeed, since $R\in {C}^{\infty}(\mathbb{R}^{N})$, an easy calculation shows that $L_{+}(\partial R/\partial x_{j})=0$. Thus, $0$ is an eigenvalue of $L_{+}$ and 
\begin{equation*}
\mathrm{span}\left\{\frac{\partial R}{\partial x_{j}}: j=1,\ldots N\right\}\subseteq\mathrm{ker}(L_{+}). 
\end{equation*}
The reverse inclusion is shown in \cite[Theorem 1.3]{PMS}. The proof is completed.
\end{proof}

\begin{proof}[\bf{Proof of Lemma \ref{L17}}] 
From spectral decomposition theorem \cite[p. 177]{TKP} and Lemma \ref{L17}, the space $\Sigma(\mathbb{R}^{N})$ can be decomposed into 
$\Sigma(\mathbb{R}^{N})=\mathrm{span}\left\{R/\|R\|^{}_{L^{2}}\right\}\oplus \mathrm{ker}(L_{+})\oplus E_{+}$, where $E_{+}$ is the image of the spectral projection to the positive part of the spectrum of $L_{+}$. In particular, if $\xi\in E_{+}$, then $\left\langle L_{+}\xi,\xi\right\rangle>0$. Then for every $u\in \Sigma(\mathbb{R}^{N})\setminus\left\{0\right\}$ with 
\begin{equation*}
\left(u, R\right)_{L^{2}}=\left(u, \frac{\partial R}{\partial x_{j}}\right)_{L^{2}}=0,\quad \text{for all $j=1$, \ldots $N$}
\end{equation*}
we get $\left\langle L_{+}u,u\right\rangle>0$. The remainder of the argument is a literal repetition of the proof of Lemma \ref{L16}.  We omit the details.
\end{proof}

\begin{lemma} \label{L20} 
Let $\psi\in \Sigma(\mathbb{R}^{N})$ such that 
$$
\inf_{y\in {\mathbb R}^{N}, \theta\in [0, 2\pi)}\|\psi-e^{-i\,\theta}R(\cdot+y)\|_{L^{2}}\leq \|R\|_{L^{2}}
$$ 
and  $\|\psi\|^{2}_{L^{2}}=\|R\|^{2}_{L^{2}}$. Then 
\begin{equation*}
\inf_{y\in {\mathbb R}^{N}, \theta\in [0, 2\pi)}\|\psi-e^{-i\,\theta}R(\cdot+y)\|^{2}_{L^{2}}
\end{equation*}
is achieved at some $y_{0}\in {\mathbb R}^{N}$ and $\theta_{0}\in [0, 2\pi)$.
\end{lemma}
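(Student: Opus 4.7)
The plan is to use the direct method of the calculus of variations: take a minimizing sequence $\{(y_n,\theta_n)\}\subset {\mathbb R}^N\times[0,2\pi)$, and show that after passing to a subsequence we may assume $\theta_n\to \theta_0$ and $y_n\to y_0$, with the latter being the essential point.

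Since $[0,2\pi]$ is compact, the convergence of the phases is automatic. The key computation is that, using $\|\psi\|^2_{L^2}=\|R\|^2_{L^2}$ and the invariance of the $L^2$ norm under translations,
\begin{equation*}
\|\psi-e^{-i\theta_n}R(\cdot+y_n)\|^2_{L^2}=2\|R\|^2_{L^2}-2\operatorname{Re}\!\left(e^{i\theta_n}\int_{{\mathbb R}^N}\psi\,\overline{R(\cdot+y_n)}\,dx\right).
\end{equation*}
Thus minimizing the distance is equivalent to maximizing $\operatorname{Re}(e^{i\theta}(\psi,R(\cdot+y))_{L^2})$ and, since $\theta$ is free, equivalent to maximizing $|(\psi,R(\cdot+y))_{L^2}|$.

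The crucial step is to rule out the possibility $|y_n|\to\infty$. Because $R$ is a Gaussian, it is straightforward (by a density argument against $C_c^\infty$ test functions, for instance) to verify that $R(\cdot+y_n)\rightharpoonup 0$ weakly in $L^2({\mathbb R}^N)$ whenever $|y_n|\to\infty$. Consequently $(\psi,R(\cdot+y_n))_{L^2}\to 0$, and the identity above forces
\begin{equation*}
\|\psi-e^{-i\theta_n}R(\cdot+y_n)\|^2_{L^2}\longrightarrow 2\|R\|^2_{L^2}.
\end{equation*}
But the standing hypothesis gives $\inf_{y,\theta}\|\psi-e^{-i\theta}R(\cdot+y)\|^2_{L^2}\leq \|R\|^2_{L^2}<2\|R\|^2_{L^2}$, contradicting the fact that $(y_n,\theta_n)$ is minimizing. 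Therefore $\{y_n\}$ is bounded and, up to a subsequence, $y_n\to y_0\in{\mathbb R}^N$.

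The remaining step is routine: by the strong continuity of translations in $L^2$, $R(\cdot+y_n)\to R(\cdot+y_0)$ strongly in $L^2$, and combined with $\theta_n\to\theta_0$ this yields
\begin{equation*}
\|\psi-e^{-i\theta_n}R(\cdot+y_n)\|^2_{L^2}\longrightarrow \|\psi-e^{-i\theta_0}R(\cdot+y_0)\|^2_{L^2},
\end{equation*}
so the limit pair $(y_0,\theta_0)$ realizes the infimum. The main (indeed only) obstacle is the boundedness of the translation parameters; this is precisely where the quantitative assumption on the infimum enters, since it provides the strict gap below $2\|R\|^2_{L^2}$ required to exclude loss of mass at infinity.
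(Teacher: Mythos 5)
Your proof is correct and takes essentially the same route as the paper's: both expand the squared distance using $\|\psi\|^{2}_{L^{2}}=\|R\|^{2}_{L^{2}}$ into $2\|R\|^{2}_{L^{2}}$ minus a cross term, use the weak vanishing of $R(\cdot+y)$ in $L^{2}$ as $|y|\to\infty$ to show the functional tends to $2\|R\|^{2}_{L^{2}}$ at infinity, and use the hypothesis $\inf\leq\|R\|_{L^{2}}$ to confine the minimization to a compact set of parameters. The only cosmetic difference is that you phrase this via a minimizing sequence while the paper invokes continuity of $\Gamma(y,\theta)$ on a closed ball times $[0,2\pi]$.
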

\begin{proof}
Consider the  auxiliary function $\Gamma:{\mathbb R}^{N}\times [0, 2\pi)\rightarrow \mathbb R$ defined by 
$$
\Gamma(y,\theta):=\|\psi-e^{-i\,\theta}R(\cdot+y)\|_{L^{2}}. 
$$
It is clear that $\Gamma$ is a continuous function. Now, since $\|\psi\|^{2}_{L^{2}}=\|R\|^{2}_{L^{2}}$ we see that
\begin{equation*}
\Gamma^2(y,\theta)=2\|R\|^{2}_{L^{2}}-2\mbox{Re} \left( e^{-i\,\theta}\int_{\mathbb{R}^{N}}R(x+y)\overline{\psi(x)}dx\right).
\end{equation*}
Since $R(\cdot+y)$ decay exponentially to zero as $|y|\rightarrow +\infty$, we have  $R(\cdot+y)\rightharpoonup 0$ in $L^{2}({\mathbb R}^{N})$ as $|y|$ goes to $+\infty$. Thus,
we have
\begin{equation*}
\lim_{|y|\rightarrow \infty}\Gamma^2(y,\theta)=2\|R\|^{2}_{L^{2}}.
\end{equation*}
By the first assumption on the function $\psi$, for every $\delta>0$, we see that there exist points  $y^{{\ast}}\in {\mathbb R}^{N}$ and $\theta^{{\ast}}\in [0, 2\pi)$ such that $\Gamma(y^{\ast},\theta^{\ast})\leq \|R\|_{L^{2}}+\delta$. As a consequence,  $\Gamma(y,\theta)$ attains its infimum over the compact set $\overline{B}_{\varrho}(0)\times[0, 2\pi]$, for a suitable $\varrho>0$, which finishes the proof.
\end{proof}

%{\color{red}{$\bigstar$}}{\color{red}{$\bigstar$}}{\color{red}{$\bigstar$}}

\vskip2pt
\noindent
We define now the a tubular neighbourhood of $R$ of size $\epsilon\in (0,\|R\|^{2}_{L^{2}})$ by
\begin{equation*}
U_{\epsilon}(R):=\left\{u\in\Sigma(\mathbb{R}^{N}):\|u\|^{2}_{L^{2}}=\|R\|^{2}_{L^{2}},\,\,  \inf_{y\in {\mathbb R}^{N}, \theta\in [0, 2\pi)}\|e^{i\,\theta}u(\cdot-y)-R\|^{2}_{H^{1}}<\epsilon\right\}.
\end{equation*}
By Lemma \ref{L20}, there exist $\sigma: U_{\epsilon}(R)\rightarrow \mathbb R$ and $Y: U_{\epsilon}(R)\rightarrow \mathbb{R}^{N}$ such that, for all $u\in U_{\epsilon}(R)$,
\begin{equation}\label{IG}
\|e^{i\,\sigma(u)}u(\cdot-Y(u))-R\|^{2}_{L^{2}}=\inf_{y\in {\mathbb R}^{N}, \theta\in [0, 2\pi)}\|e^{i\,\theta}u(\cdot-y)-R\|^{2}_{L^{2}}
\end{equation}
We claim that the function $w:=e^{i\,\sigma(u)}u(\cdot-Y(u))$ satisfies the orthogonality conditions
\begin{equation}\label{Fw}
\left(w, iR\right)_{L^{2}}=\left(w, \frac{\partial R}{\partial x_{j}}\right)_{L^{2}}=0 \quad \text{for all $j=1$, \ldots $N$.}
\end{equation}
Indeed, by differentiating \eqref{IG} with respect to $\theta$  we see that
\begin{align*}
\left(w, iR\right)_{L^{2}}&=-\mbox{Re}\int_{{\mathbb R}^{N}}\overline{ie^{i\sigma(u)}u(\cdot-Y(u))}R(x)\,dx\\
&=\left(ie^{i\sigma(u)}u(\cdot-Y(u)), e^{i\,\sigma(u)}u(\cdot-Y(u))-R\right)_{L^{2}}=0.
\end{align*}
On the other hand,  by differentiating \eqref{IG} with respect to $y_{j}$ we get
\begin{align*}
\left(w, \frac{\partial R}{\partial x_{j}}\right)_{L^{2}}&=\mbox{Re}\int_{{\mathbb R}^{N}}\overline{e^{i\sigma(u)}u(\cdot-Y(u))}\frac{\partial R(x)}{\partial x_{j}}\,dx-\int_{{\mathbb R}^{N}}R(x)\frac{\partial R(x)}{\partial x_{j}}\, dx\\
&=\left(u-e^{-i\,\sigma(u)}R(\cdot+Y(u)),  e^{-i\sigma(u)}\frac{\partial R}{\partial x_{j}}(\cdot+Y(u))\right)_{L^{2}}=0.
\end{align*}

Now we give the proof of Proposition \ref{DP2}.

\begin{proof}[\bf{Proof of Proposition \ref{DP2}}] 
Our proof is inspired by the one contained in \cite[Lemma 6.3]{SLC}. First, we claim that there exist $\epsilon>0$ and $C>0$ such that for all $v\in U_{\epsilon}(R)$ we have
\begin{equation*}
\mathcal{E}(v)-\mathcal{E}(R)\geq C\inf_{y\in {\mathbb R}^{N}, \theta\in {\mathbb R}}\|e^{i\,\theta}v(\cdot-y)-R\|^{2}_{H^{1}}.
\end{equation*}
Indeed, for $\epsilon$ small enough, let $w:=e^{i\,\sigma(v)}v(\cdot-Y(v))$ be as in \eqref{IG}. Let $\lambda\in {\mathbb R}$ and $z\in \Sigma(\mathbb{R}^{N})$ be such that $w=R+\lambda R+z$ with $(z, R)_{L^{2}}=0$. From \eqref{Fw} we see that 
$$
(z, \partial R/ \partial x_{j})_{L^{2}}=0,\quad (z, iR)_{L^{2}}=0. 
$$
Then $z$ satisfies the conditions \eqref{CHO} in Lemma \ref{L15}. Hence, 
there exists $\delta>0$ such that 
\begin{equation}\label{DSS}
\left\langle S^{\prime\prime}({R})z, z\right\rangle\geq \delta \|z\|^{2}_{\Sigma(\mathbb{R}^{N})}\geq \delta\|z\|^{2}_{H^{1}}.
\end{equation}
On the other hand, since $S^{\prime}(R)=0$, under the hypothesis of Proposition \ref{DP2} and by virtue of Taylor formula we get
\begin{align}\nonumber
S(v)-S(R)&=S(w)-S(R)\\ \label{Na}
&=\frac{1}{2}\left\langle S^{\prime\prime}(R)(w-R), w-R\right\rangle+o(\|w-R\|^{2}_{H^{1}}).
\end{align}
Moreover, it is not hard to show that $\lambda=o(\|w-R\|_{H^{1}})$ and (see \cite[Lemma 6.3]{SLC})
\begin{equation*}
\left\langle S^{\prime\prime}(R)(w-R), w-R\right\rangle=\left\langle S^{\prime\prime}(R)z,z\right\rangle+o(\|w-R\|^{2}_{H^{1}}).
\end{equation*}
Notice that $\|z\|^{2}_{H^{1}}\geq\|w-R\|^{2}_{H^{1}}+o(\|w-R\|^{2}_{H^{1}})$. Thus, since $\|v\|^{2}_{L^{2}}=\|R\|^{2}_{L^{2}}$, from \eqref{DSS} and \eqref{Na} we obtain 
\begin{equation*}
\mathcal{E}(v)-\mathcal{E}(R)\geq \frac{\delta}{2}\|w-R\|^{2}_{H^{1}}+o(\|w-R\|^{2}_{H^{1}}).
\end{equation*}
Then choosing $\epsilon$ small enough and recalling the definition of $w$ in \eqref{IG}, it follows that 
\begin{equation*}
\mathcal{E}(v)-\mathcal{E}(R)\geq \frac{\delta}{4}\inf_{y\in {\mathbb R}^{N}, \theta\in {\mathbb R}}\|e^{i\,\theta}v(\cdot-y)-R\|^{2}_{H^{1}},
\end{equation*}
for every $v\in U_{\epsilon}(R)$. This concludes the proof of claim. 

Finally, from Proposition \ref{P1} we see that for every $\epsilon>0$, there exists $h>0$ such that if $\mathcal{E}(v)-\mathcal{E}(R)<h$ and $\|v\|^{2}_{L^{2}}=\|R\|^{2}_{L^{2}}$, then $v\in U_{\epsilon}(R)$. Then, choosing $h$ small enough, Proposition \ref{DP2} follows.
\end{proof}

%{\color{red}{$\bigstar$}}{\color{red}{$\bigstar$}}{\color{red}{$\bigstar$}}

\section{Proof of Theorem \ref{T2}}\label{S:3}
In this section we will show the main steps of the proof of Theorem \ref{T2}. We essentially follow the argument of \cite{SK2}, which is based upon the original paper by Bronski and Jerrard \cite{BJ}. Using the variational structure of \eqref{0NL} and by the regularity of solutions (see the paragraph after Proposition \ref{PCS}), it is not difficult to show that the solution $u_{\epsilon}$ satisfies the identities
\begin{align}\label{Id1}
\frac{1}{\epsilon^{N}}\frac{\partial |u_{\epsilon}|^{2}}{\partial t}(x,t)&=-\mbox{div}_{x}p_{\epsilon}(x,t),\quad (x,t)\in {\mathbb R}^{N}\times {\mathbb R}^{},\\\label{Id2}
\int_{{\mathbb R}^{N}}\frac{\partial p_{\epsilon}}{\partial t}(x,t)dx&=-\int_{{\mathbb R}^{N}}\nabla V(x)\frac{|u_{\epsilon}(x,t)|^{2}}{\epsilon^{N}}, \quad x\in {\mathbb R}^{N}.
\end{align}
In light of Proposition \ref{DP2}, the result in Theorem \ref{T1} can be improved. More precisely, combining  Proposition \ref{DP2} and Lemma \ref{L9} and following the same argument as Theorem \ref{T1}  we have the following 

\begin{proposition}\label{PA12}
If $u_{\epsilon}\in \Sigma(\mathbb{R}^{N})$ is the family  of solutions to the Cauchy problem \eqref{0NL} with initial data \eqref{ID}, then  there exist $\epsilon_{0}>0$, a time $T^{\ast}_{\epsilon}>0$, families of bounded functions $\theta_{\epsilon}: \mathbb{R}\rightarrow [0, 2\pi)$, $y_{\epsilon}:\mathbb{R}\rightarrow \mathbb{R}^{N}$ such that 
\begin{equation*}
u^{\epsilon}(x,t)=e^{\frac{i}{\epsilon}(\nu(t)\cdot x+\theta_{\epsilon}(t))}e^{\frac{1+N}{2}}e^{-\frac{1}{\epsilon^{2}}|{x-y_{\epsilon}(t)}|^{2}}+\omega_{\epsilon}(x,t),
\end{equation*}
where 
\begin{equation*}
\|\omega_{\epsilon}(t)\|^{2}_{H_{\epsilon}^{1}}\leq  C(|\nu(t)||\sigma_{{\epsilon}}(t)|+|\lambda_{{\epsilon}}(t)|)+\mathcal{O}(\epsilon^{2})\quad
\text{for all $\epsilon\in (0, \epsilon_{0})$ and $t\in [0, T^{\ast}_{\epsilon})$.}
\end{equation*}
\end{proposition}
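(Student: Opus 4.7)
My plan is to mirror the proof of Theorem \ref{T1} step by step, replacing the qualitative proximity estimate of Proposition \ref{P1} by the quantitative modulational stability estimate of Proposition \ref{DP2} at the one point where it is invoked. All auxiliary inputs—the rescaling \eqref{af}, the mass conservation \eqref{MC}, and the energy control of Lemma \ref{L9}—carry over unchanged.

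Concretely, I would again work with
\begin{equation*}
\psi_{\epsilon}(x,t) := e^{-\frac{i}{\epsilon}\nu(t)\cdot[\epsilon x + x(t)]}\,u_{\epsilon}(\epsilon x + x(t), t).
\end{equation*}
Mass conservation \eqref{MC} gives $\|\psi_{\epsilon}(t)\|_{L^2}^2 = \|R\|_{L^2}^2$ and Lemma \ref{L9} yields
\begin{equation*}
0 \leq \mathcal{E}(\psi_{\epsilon}(t)) - \mathcal{E}(R) \leq \beta_{\epsilon}(t) + \mathcal{O}(\epsilon^2), \quad \beta_{\epsilon}(t) := |\nu(t)||\sigma_{\epsilon}(t)| + |\lambda_{\epsilon}(t)|.
\end{equation*}
Fixing a horizon $T_0>0$ and letting $h>0$ be the threshold supplied by Proposition \ref{DP2}, define
\begin{equation*}
T^{\ast}_{\epsilon} := \sup\Bigl\{ t \in [0, T_0] : \beta_{\epsilon}(\tau) + C\epsilon^2 < h \text{ for all } \tau \in [0, t]\Bigr\}.
\end{equation*}
Continuity of $\beta_{\epsilon}$ together with $\beta_{\epsilon}(0) = \mathcal{O}(\epsilon^2)$ from Lemma \ref{L8} guarantees $T^{\ast}_{\epsilon}>0$ for all sufficiently small $\epsilon$.

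For each $t \in [0, T^{\ast}_{\epsilon})$ the hypotheses of Proposition \ref{DP2} are satisfied by $\psi_{\epsilon}(t) \in \Sigma(\mathbb{R}^N)$, so
\begin{equation*}
\inf_{y, \theta} \|\psi_{\epsilon}(t) - e^{i\theta} R(\cdot - y)\|_{H^1}^2 \leq C\bigl(\mathcal{E}(\psi_{\epsilon}(t)) - \mathcal{E}(R)\bigr) \leq C\beta_{\epsilon}(t) + \mathcal{O}(\epsilon^2).
\end{equation*}
Lemma \ref{L20} supplies uniformly bounded optimizers $\theta^{\ast}_{\epsilon}(t) \in [0, 2\pi)$ and $z_{\epsilon}(t) \in \mathbb{R}^N$. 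Setting $\theta_{\epsilon}(t) := \epsilon\,\theta^{\ast}_{\epsilon}(t)$, $y_{\epsilon}(t) := x(t) - \epsilon z_{\epsilon}(t)$, and undoing the scaling $x \mapsto \epsilon x + x(t)$, the remainder $\omega_{\epsilon}$ defined through the claimed decomposition satisfies
\begin{equation*}
\|\omega_{\epsilon}(t)\|_{H^1_{\epsilon}}^2 = \|\psi_{\epsilon}(t) - e^{i\theta^{\ast}_{\epsilon}(t)} R(\cdot - z_{\epsilon}(t))\|_{H^1}^2 \leq C\beta_{\epsilon}(t) + \mathcal{O}(\epsilon^2),
\end{equation*}
since the norm $\|\cdot\|_{H^1_{\epsilon}}$ is designed precisely so that this change of variables is an isometry, which yields the conclusion.

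Granting Proposition \ref{DP2}, the only technical point in this assembly is the bookkeeping identification of the scaled norm $\|\cdot\|_{H^1_{\epsilon}}$ with the unscaled $H^1$-norm through the Jacobian of $x \mapsto \epsilon x + x(t)$; this is routine. The genuine difficulty sits one layer deeper, inside Proposition \ref{DP2}, whose validity is conditional upon the $C^2$-regularity of $\mathcal{E}$ on the tubular neighbourhood $V_{\epsilon}(R)$—the open problem flagged in the introduction.
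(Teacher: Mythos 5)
Your proposal is correct and coincides with the paper's own (only sketched) argument: the paper proves Proposition \ref{PA12} precisely by rerunning the proof of Theorem \ref{T1} with the quantitative estimate of Proposition \ref{DP2} substituted for Proposition \ref{P1}, using the same rescaling \eqref{af}, mass conservation \eqref{MC}, Lemma \ref{L9}, and the stopping time \eqref{Tim}. The minor variations (your slightly different definition of $T^{\ast}_{\epsilon}$ and the explicit appeal to Lemma \ref{L20} for the optimizers) are immaterial.
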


\noindent
Let $\epsilon_{0}>0$, $T^{\ast}_{\epsilon}>0$ and $y_{\epsilon}(t)$  be as in Proposition \ref{PA12}. Then we have the following

\begin{lemma} \label{L23} 
There exists a constant $C>0$ such that
\begin{align*}
&\left\|\frac{|u_{\epsilon}(x,t)|^{2}}{\epsilon^{N}}dx-m\delta_{y_{\epsilon}(t)}\right\|_{(C^{2})^{\ast}}+\left\|p_{\epsilon}(x,t)dx-m\nu(t)\delta_{y_{\epsilon}(t)}\right\|_{(C^{2})^{\ast}}   \\
&\leq C|\sigma_{{\epsilon}}(t)|+C|\lambda_{{\epsilon}}(t)|+\mathcal{O}(\epsilon^{2})
\end{align*}
for all $\epsilon\in (0, \epsilon_{0})$ and $t\in [0, T^{\ast}_{\epsilon})$.
\end{lemma}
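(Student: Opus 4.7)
The plan is to test both vector/scalar measures against an arbitrary $\phi\in C^{2}(\mathbb{R}^{N})$ with $\|\phi\|_{C^{2}}\leq 1$ and bound, uniformly in $\phi$, the two quantities
\[
I_{1}(\phi):=\int_{\mathbb{R}^{N}}\phi(x)\,\frac{|u_{\epsilon}(x,t)|^{2}}{\epsilon^{N}}\,dx-m\phi(y_{\epsilon}(t)),\qquad
I_{2}(\phi):=\int_{\mathbb{R}^{N}}\phi(x)\,p_{\epsilon}(x,t)\,dx-m\nu(t)\phi(y_{\epsilon}(t)).
\]
I would use the decomposition $u_{\epsilon}=G_{\epsilon}+\omega_{\epsilon}$ of Proposition~\ref{PA12}, where
\[
G_{\epsilon}(x,t)=e^{\frac{i}{\epsilon}(\nu(t)\cdot x+\theta_{\epsilon}(t))}R\!\left(\tfrac{x-y_{\epsilon}(t)}{\epsilon}\right),
\]
together with the rescaled remainder $\tilde\omega_{\epsilon}(z):=e^{-\frac{i}{\epsilon}(\nu\cdot(\epsilon z+y_{\epsilon})+\theta_{\epsilon})}\omega_{\epsilon}(\epsilon z+y_{\epsilon})$, which satisfies $\|\tilde\omega_{\epsilon}\|_{L^{2}}^{2}=\epsilon^{-N}\|\omega_{\epsilon}\|_{L^{2}}^{2}\leq\|\omega_{\epsilon}\|_{H_{\epsilon}^{1}}^{2}\leq C(|\sigma_{\epsilon}|+|\lambda_{\epsilon}|)+\mathcal{O}(\epsilon^{2})$ and, by construction of the modulation parameters in the proof of Proposition~\ref{DP2}, the orthogonality relations $(\tilde\omega_{\epsilon},iR)_{L^{2}}=0$ and $(\tilde\omega_{\epsilon},\partial_{j}R)_{L^{2}}=0$ of \eqref{Fw}; the mass conservation $\|R+\tilde\omega_{\epsilon}\|_{L^{2}}^{2}=\|R\|_{L^{2}}^{2}$ additionally forces $(R,\tilde\omega_{\epsilon})_{L^{2}}=-\tfrac12\|\tilde\omega_{\epsilon}\|_{L^{2}}^{2}$.

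To handle $I_{1}(\phi)$, I would expand $|u_{\epsilon}|^{2}=|G_{\epsilon}|^{2}+2\mathrm{Re}(\overline{G_{\epsilon}}\omega_{\epsilon})+|\omega_{\epsilon}|^{2}$. Since $|G_{\epsilon}(x)|^{2}=R^{2}((x-y_{\epsilon})/\epsilon)$, the change of variables $z=(x-y_{\epsilon})/\epsilon$ reduces the Gausson contribution to $\int\phi(\epsilon z+y_{\epsilon})R^{2}(z)\,dz$, to which Lemma~\ref{L3} applies and gives $m\phi(y_{\epsilon})+\mathcal{O}(\epsilon^{2})$. The pure quadratic error $\int\phi|\omega_{\epsilon}|^{2}/\epsilon^{N}\,dx$ is immediately controlled by $\|\phi\|_{L^{\infty}}\|\tilde\omega_{\epsilon}\|_{L^{2}}^{2}\leq C(|\sigma_{\epsilon}|+|\lambda_{\epsilon}|)+\mathcal{O}(\epsilon^{2})$. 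The delicate term is the cross term, which after the same change of variables equals $2\mathrm{Re}\!\int\phi(\epsilon z+y_{\epsilon})R(z)\tilde\omega_{\epsilon}(z)\,dz$; Taylor expanding $\phi(\epsilon z+y_{\epsilon})=\phi(y_{\epsilon})+\epsilon\nabla\phi(y_{\epsilon})\cdot z+\mathcal{O}(\epsilon^{2}|z|^{2})$ and using the orthogonality of $\tilde\omega_{\epsilon}$, the constant term collapses to $2\phi(y_{\epsilon})(R,\tilde\omega_{\epsilon})_{L^{2}}=-\phi(y_{\epsilon})\|\tilde\omega_{\epsilon}\|_{L^{2}}^{2}$, which is quadratic in the error, while the linear term vanishes because $z_{j}R(z)=-\tfrac12\partial_{j}R(z)$ and $(\partial_{j}R,\tilde\omega_{\epsilon})_{L^{2}}=0$. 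The quadratic remainder $\mathcal{O}(\epsilon^{2}\|\tilde\omega_{\epsilon}\|_{L^{2}})$ is absorbed by Young's inequality into $\mathcal{O}(\epsilon^{2})+\mathcal{O}(|\sigma_{\epsilon}|+|\lambda_{\epsilon}|)$.

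For $I_{2}(\phi)$, I would expand $\overline{u_{\epsilon}}\nabla u_{\epsilon}$ into its four bilinear pieces. A direct computation using $\nabla G_{\epsilon}=(i\nu/\epsilon)G_{\epsilon}+\epsilon^{-1}e^{\frac{i}{\epsilon}(\cdots)}\nabla R((x-y_{\epsilon})/\epsilon)$ and the fact that $R\nabla R$ is real gives $\mathrm{Im}(\overline{G_{\epsilon}}\nabla G_{\epsilon})/\epsilon^{N-1}=\nu\,R^{2}((x-y_{\epsilon})/\epsilon)/\epsilon^{N}$, so the Gausson part of $I_{2}(\phi)$ is again $m\nu(t)\phi(y_{\epsilon})+\mathcal{O}(\epsilon^{2})$ by Lemma~\ref{L3}. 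The purely quadratic error $\mathrm{Im}(\overline{\omega_{\epsilon}}\nabla\omega_{\epsilon})/\epsilon^{N-1}$ is bounded by $\|\phi\|_{L^{\infty}}(\|\omega_{\epsilon}\|_{L^{2}}/\epsilon^{N/2})(\|\nabla\omega_{\epsilon}\|_{L^{2}}/\epsilon^{(N-2)/2})\leq C\|\omega_{\epsilon}\|_{H_{\epsilon}^{1}}^{2}$. The two cross pieces $\mathrm{Im}(\overline{G_{\epsilon}}\nabla\omega_{\epsilon}+\overline{\omega_{\epsilon}}\nabla G_{\epsilon})/\epsilon^{N-1}$ are treated as in $I_{1}$: after integrating by parts the gradient falling on $\omega_{\epsilon}$ and rescaling, one obtains linear combinations of $(R,\tilde\omega_{\epsilon})$, $(iR,\tilde\omega_{\epsilon})$ and $(\partial_{j}R,\tilde\omega_{\epsilon})$ multiplied by values and first derivatives of $\phi$ at $y_{\epsilon}$, plus a remainder of order $\epsilon^{2}\|\tilde\omega_{\epsilon}\|_{L^{2}}$; the orthogonality conditions make the dangerous terms either vanish or become quadratic in $\tilde\omega_{\epsilon}$.

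The main obstacle is precisely the cross term in both $I_{1}$ and $I_{2}$: the naive Cauchy--Schwarz bound produces $\|\tilde\omega_{\epsilon}\|_{L^{2}}\sim\sqrt{|\sigma_{\epsilon}|+|\lambda_{\epsilon}|}$, which is strictly weaker than the required linear rate. Recovering the linear rate hinges on the fact that $y_{\epsilon}(t)$ and $\theta_{\epsilon}(t)$ are the modulation parameters extracted in Proposition~\ref{DP2}, so that the orthogonality conditions \eqref{Fw} hold for $\tilde\omega_{\epsilon}$; a secondary technical point is verifying that these orthogonalities are preserved after the rescaling $x\mapsto\epsilon z+y_{\epsilon}$ and the gauge transformation that defines $\tilde\omega_{\epsilon}$, which is straightforward but needs to be spelled out.
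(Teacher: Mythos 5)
Your strategy is genuinely different from the paper's. The paper never expands $u_{\epsilon}=G_{\epsilon}+\omega_{\epsilon}$ inside the pairings; instead it uses the Madelung-type identity $|\nabla v|^{2}=|\nabla |v||^{2}+|\mathrm{Im}(\overline{v}\nabla v)|^{2}/|v|^{2}$ together with the variational lower bound $\mathcal{E}(|\psi_{\epsilon}|)\geq \mathcal{E}(R)$ to extract, from Lemma \ref{L9}, the \emph{already quadratic} quantity
\begin{equation*}
\int_{\mathbb{R}^{N}}\Bigl|\epsilon^{N/2}\tfrac{p_{\epsilon}}{|u_{\epsilon}|}-\tfrac{1}{m}\bigl(\textstyle\int p_{\epsilon}\bigr)\tfrac{|u_{\epsilon}|}{\epsilon^{N/2}}\Bigr|^{2}dx+m\Bigl|\nu(t)-\tfrac{1}{m}\textstyle\int p_{\epsilon}\Bigr|^{2}\leq C|\nu||\sigma_{\epsilon}|+C|\lambda_{\epsilon}|+\mathcal{O}(\epsilon^{2}),
\end{equation*}
and then closes the estimate with $ab\leq a^{2}+b^{2}$; no modulation orthogonality is invoked. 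This Bronski--Jerrard device is precisely what lets the paper get the linear rate for the momentum without ever facing a first-order cross term.

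Your route for $I_{1}$ is fine, but for $I_{2}$ there is a genuine gap at exactly the point you flag as "dangerous''. Writing $G_{\epsilon}=e^{i\Theta}R(\tfrac{x-y_{\epsilon}}{\epsilon})$ and computing the cross contribution $\mathrm{Im}(\overline{G_{\epsilon}}\nabla\omega_{\epsilon}+\overline{\omega_{\epsilon}}\nabla G_{\epsilon})/\epsilon^{N-1}$, after rescaling and integrating by parts you obtain, at leading order in the Taylor expansion of $\phi$, the term $-2\phi(y_{\epsilon})\int_{\mathbb{R}^{N}}\nabla R(z)\,\mathrm{Im}\,\tilde{\omega}_{\epsilon}(z)\,dz$, i.e.\ the pairings $(i\partial_{j}R,\tilde{\omega}_{\epsilon})_{L^{2}}$ with $O(1)$ coefficients. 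These are \emph{not} among the orthogonality relations \eqref{Fw} (nor \eqref{CHO}): minimizing the $L^{2}$ distance over $\theta$ and $y$ yields only the $N+1$ conditions $(w,iR)_{L^{2}}=(w,\partial_{j}R)_{L^{2}}=0$, which control $\int R\,\mathrm{Im}\,\tilde{\omega}_{\epsilon}$ and $\int \partial_{j}R\,\mathrm{Re}\,\tilde{\omega}_{\epsilon}$ but say nothing about $\int \partial_{j}R\,\mathrm{Im}\,\tilde{\omega}_{\epsilon}$. Cauchy--Schwarz on that term gives only $\|\tilde{\omega}_{\epsilon}\|_{L^{2}}\sim(|\sigma_{\epsilon}|+|\lambda_{\epsilon}|)^{1/2}$, which is exactly the loss you set out to avoid, so your claim that the orthogonality conditions make all dangerous terms vanish or become quadratic is false for the momentum. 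The gap is repairable: testing with $\phi\equiv 1$ (equivalently, computing $\int\mathrm{Im}(\overline{\psi_{\epsilon}}\nabla\psi_{\epsilon})\,dz=\sigma_{\epsilon}(t)$ directly from \eqref{af}) shows that $-2\int\nabla R\,\mathrm{Im}\,\tilde{\omega}_{\epsilon}\,dz=\sigma_{\epsilon}(t)-\int\mathrm{Im}(\overline{\tilde{\omega}_{\epsilon}}\nabla\tilde{\omega}_{\epsilon})\,dz$, so this integral is in fact bounded by $C|\sigma_{\epsilon}|+C\|\omega_{\epsilon}\|_{H^{1}_{\epsilon}}^{2}$ and the linear rate survives --- but this identity, not the orthogonality, is the missing ingredient, and it must be supplied explicitly.
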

\begin{proof}
First, notice that for any $v\in H^{1}({\mathbb R}^{N})$, we have $|\nabla|v||^{2}=|\nabla v|^{2}-\frac{|\mbox{Im}(\overline{v}\nabla v)|^{2}}{|v|^{2}}$. Furthermore, it is not hard to show that 
\begin{align}\nonumber
\int_{{\mathbb R}^{N}}\frac{|\mbox{Im}(\overline{\psi_{\epsilon}}\nabla \psi_{\epsilon})|^{2}}{|\psi_{\epsilon}|^{2}}\,dx&=\epsilon^{N}\int_{{\mathbb R}^{N}}\frac{|p_{\epsilon}(x,t)|^{2}}{|u_{\epsilon}(x,t)|^{2}}dx+m|\nu(t)|^{2}-2\nu(t)\cdot\int_{{\mathbb R}^{N}}p_{\epsilon}(x,t)dx\\ \nonumber
&=\int_{{\mathbb R}^{N}}\left|\epsilon^{^{N/2}}\frac{p_{\epsilon}(x,t)}{|u_{\epsilon}(x,t)|}-\frac{1}{m}\left(\int_{{\mathbb R}^{N}}p_{\epsilon}(x,t)dx\right)\frac{|u_{\epsilon}(x,t)|}{\epsilon^{N/2}}\right|^{2}\,dx\\
&+m\left|\nu(t)-\frac{\int_{{\mathbb R}^{N}}p_{\epsilon}(x,t)dx}{m}\right|^{2},\label{Np}
\end{align}
where $\psi_{\epsilon}(x,t)$ is the function defined in \eqref{af}. By Lemma \ref{L9} we see that
\begin{align*}
0\leq \mathcal{E}(|\psi_{{\epsilon}}|)-\mathcal{E}(R)+\frac{1}{2}\int_{{\mathbb R}^{N}}\frac{|\mbox{Im}(\overline{\psi_{\epsilon}}\nabla \psi_{\epsilon})|^{2}}{|\psi_{\epsilon}|^{2}}\,dx\leq |\nu(t)||\sigma_{{\epsilon}}(t)|+|\lambda_{{\epsilon}}(t)|+\mathcal{O}(\epsilon^{2}).
\end{align*}
for every $t\in[0, T^{\ast}_{\epsilon})$ and $\epsilon \in (0, \epsilon_{0})$.  Since $\mathcal{E}(|\psi_{{\epsilon}}|)-\mathcal{E}(R)\geq 0$, it follows from \eqref{Np},
\begin{align}\nonumber
&\int_{{\mathbb R}^{N}}\left|\epsilon^{^{N/2}}\frac{p_{\epsilon}(x,t)}{|u_{\epsilon}(x,t)|}-\frac{1}{m}\left(\int_{{\mathbb R}^{N}}p_{\epsilon}(x,t)dx\right)\frac{|u_{\epsilon}(x,t)|}{\epsilon^{N/2}}\right|^{2}\,dx\\ \label{zx}
&+m\left|\nu(t)-\frac{\int_{{\mathbb R}^{N}}p_{\epsilon}(x,t)dx}{m}\right|^{2}\leq C|\nu(t)||\sigma_{{\epsilon}}(t)|+C|\lambda_{{\epsilon}}(t)|+\mathcal{O}(\epsilon^{2}).
\end{align}
for every $t\in[0, T^{\ast}_{\epsilon})$ and $\epsilon \in (0, \epsilon_{0})$. Now, to prove the assertion, we need to estimate $\Gamma(t)$, where
\begin{align*}
\Gamma(t):&=\left|\int_{{\mathbb R}^{N}}f(x)\frac{|u_{\epsilon}(x,t)|^{2}}{\epsilon^{N}}dx-mf(y_{\epsilon})\right|+\left|\int_{{\mathbb R}^{N}}p_{\epsilon}(x,t)f(x)dx-m\nu(t)f(y_{\epsilon})\right|
\end{align*}
for every function $f$ in $C^{2}({\mathbb R}^{N})$ with $\|f\|_{C^{2}}\leq 1$. 
By simple computations we see that
\begin{align*}
&\left|\int_{{\mathbb R}^{N}}p_{\epsilon}(x,t)f(x)dx-m\nu(t)f(y_{\epsilon})\right|\leq \\
&\leq\frac{1}{m}\left|\int_{{\mathbb R}^{N}}p_{\epsilon}(x,t)dx\right|\left|\int_{{\mathbb R}^{N}}\frac{f(x)|u_{\epsilon}(x,t)|^{2}}{\epsilon^{N}}dx-mf(y_{\epsilon}(t))\right|\\
&+\left|\int_{{\mathbb R}^{N}}f(x)\left\{p_{\epsilon}(x,t)-\frac{1}{m}\left(\int_{{\mathbb R}^{N}}p_{\epsilon}(x,t)dx \right)\frac{|u_{\epsilon}(x,t)|^{2}}{\epsilon^{N}}
\right\}dx\right|+ C\sigma_{\epsilon}(t),
\end{align*}
for every $t\in[0, T^{\ast}_{\epsilon})$ and $\epsilon \in(0, \epsilon_{0})$. Here, the function $\sigma_{\epsilon}(t)$ is defined in Lemma \ref{L8}. Set $f_{\sharp}(x):=f(x)-f(y_{\epsilon}(t))$. Since $\int_{{\mathbb R}^{N}}p_{\epsilon}(x,t)dx$ is bounded (see Lemma \ref{L1}) and 
\begin{align*}
\int_{{\mathbb R}^{N}}\left\{p_{\epsilon}(x,t)-\frac{1}{m}\left(\int_{{\mathbb R}^{N}}p_{\epsilon}(x,t)dx \right)\frac{|u_{\epsilon}(x,t)|^{2}}{\epsilon^{N}}\right\}dx=0,
\end{align*}
it follows that
\begin{align*}
\Gamma(t)&\leq\int_{{\mathbb R}^{N}}f_{\sharp}(x)\left|p_{\epsilon}(x,t)-\frac{1}{m}\left(\int_{{\mathbb R}^{N}}p_{\epsilon}(x,t)dx \right)\frac{|u_{\epsilon}(x,t)|^{2}}{\epsilon^{N}}\right|dx\\
&+\int_{{\mathbb R}^{N}}|f_{\sharp}(x)|\frac{|u_{\epsilon}(x,t)|^{2}}{\epsilon^{N}}dx+C\int_{{\mathbb R}^{N}}|f_{\sharp}(x)|\frac{|u_{\epsilon}(x,t)|^{2}}{\epsilon^{N}}dx+C\sigma_{\epsilon}(t).
\end{align*}
Using the inequality $ab\leq a^{2}+b^{2}$ and \eqref{zx} we obtain
\begin{align*}
\Gamma(t)&\leq\frac{1}{2}\int_{{\mathbb R}^{N}}\left|\epsilon^{N/2}\frac{p_{\epsilon}(x,t)}{|u_{\epsilon}(x,t)|}-\frac{1}{m}\left(\int_{{\mathbb R}^{N}}p_{\epsilon}(x,t)dx \right)\frac{|u_{\epsilon}(x,t)|^{}}{\epsilon^{N/2}}\right|^{2}dx\\
&+\int_{{\mathbb R}^{N}}\left\{C|f_{\sharp}(x)|+\frac{1}{2}|f_{\sharp}(x)|^{2}\right\}\frac{|u_{\epsilon}(x,t)|^{2}}{\epsilon^{N}}dx+C\sigma_{\epsilon}(t)\\
&\leq \int_{{\mathbb R}^{N}}\left\{C|f_{\sharp}(x)|+\frac{1}{2}|f_{\sharp}(x)|^{2}\right\}\frac{|u_{\epsilon}(x,t)|^{2}}{\epsilon^{N}}dx+C|\sigma_{{\epsilon}}(t)|+C|\lambda_{{\epsilon}}(t)|+\mathcal{O}(\epsilon^{2})
\end{align*}
Finally, in view of the elementary inequality $a^{2}\leq 2b^{2}+2(a-b)^{2}$ with
\begin{align*}
a=\frac{|u_{\epsilon}(x,t)|}{\epsilon^{N/2}}, \quad b=\frac{1}{\epsilon^{N/2}}R\left(\frac{x-y_{\epsilon}(t)}{\epsilon}\right),
\end{align*}
since $f_{\sharp}(y_{\epsilon}(t))=0$, it follows from Lemma \ref{L3} and Proposition \ref{PA12},
\begin{align*}
\Gamma(t)&\leq\frac{C}{\epsilon^{N}}\int_{{\mathbb R}^{N}}\left\{|f_{\sharp}(x)|+|f_{\sharp}(x)|^{2}\right\}R^{2}\left(\frac{x-y_{\epsilon}(t)}{\epsilon}\right)\,dx\\ &+\frac{C}{\epsilon^{N}}\int_{{\mathbb R}^{N}}\left|u_{\epsilon}(x,t)-R\left(\frac{x-y_{\epsilon}(t)}{\epsilon}\right)\right|^{2}dx+C|\sigma_{{\epsilon}}(t)|+C|\lambda_{{\epsilon}}(t)|+\mathcal{O}(\epsilon^{2})\\
&\leq C|\sigma_{{\epsilon}}(t)|+C|\lambda_{{\epsilon}}(t)|+\mathcal{O}(\epsilon^{2}),
\end{align*}
for every $t\in[0, T^{\ast}_{\epsilon})$ and $\epsilon \in(0, \epsilon_{0})$, which concluded the proof.
\end{proof}

We now turn to the estimate the distance $|x(t)-y_{\epsilon}(t)|$, where the function $y_{\epsilon}(t)$ is given in Proposition \ref{PA12} and 
$x(t)$  is the solution of the classical Hamiltonian system \eqref{HS}.

\begin{lemma} \label{L25} 
Let $u_{{\epsilon}}$ be the family of solutions to problem \eqref{0NL} with initial data \eqref{ID}. Consider the function $\gamma_{{\epsilon}}$ defined by  
\begin{align*}
\gamma_{\epsilon}(t)=m\,x(t)-\frac{1}{\epsilon^{N}}\int_{{\mathbb R}^{N}}x\chi(x)|u_{\epsilon}(x,t)|^{2}dx,
\end{align*}
where $\chi(x)$ is defined in \eqref{cha}. Then $\gamma_{{\epsilon}}(t)$ is a continuous function on $\mathbb{R}$ and satisfy  $|\gamma_{{\epsilon}}(0)|=\mathcal{O}(\epsilon^{2})$ as $\epsilon$ goes to zero.
\end{lemma}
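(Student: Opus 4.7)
The plan is to treat the two claims separately. For the continuity statement, I will use the regularity of the solution. Since $u_\epsilon \in C(\mathbb{R}, W(\mathbb{R}^{N})) \hookrightarrow C(\mathbb{R}, L^{2}(\mathbb{R}^{N}))$ by Proposition \ref{PCS} and Lemma \ref{POii}, the function $t \mapsto |u_\epsilon(\cdot,t)|^{2}$ is continuous with values in $L^{1}(\mathbb{R}^{N})$, via the elementary bound $\||u|^{2} - |v|^{2}\|_{L^{1}} \leq \|u - v\|_{L^{2}}(\|u\|_{L^{2}} + \|v\|_{L^{2}})$. Since $x \mapsto x\chi(x)$ is bounded and of compact support (belonging to $L^{\infty}$), the integral $\int_{\mathbb{R}^{N}} x\chi(x)|u_\epsilon(x,t)|^{2}\,dx$ is continuous in $t$. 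The term $mx(t)$ is obviously continuous since $x(t)$ solves the ODE \eqref{HS}. Hence $\gamma_\epsilon$ is continuous on $\mathbb{R}$.

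For the expansion at $t = 0$, I will use the explicit form of the initial data together with Lemma \ref{L3}. At $t = 0$, $x(0) = x_{0}$ and $|u_{\epsilon,0}(x)|^{2} = R^{2}\!\left(\frac{x - x_{0}}{\epsilon}\right)$. Substituting and performing the change of variable $y = (x - x_{0})/\epsilon$ yields
\begin{equation*}
\gamma_\epsilon(0) = m\, x_{0} - \int_{\mathbb{R}^{N}} f(\epsilon y + x_{0})\, R^{2}(y)\, dy, \qquad f(x) := x\,\chi(x).
\end{equation*}
Here $f$ is a vector-valued function whose components $f_{j}(x) = x_{j}\chi(x)$ belong to $C^{\infty}_{c}(\mathbb{R}^{N})$, so in particular each $f_{j}$ satisfies the hypothesis of Lemma \ref{L3}.

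The key observation is that $x_{0}$ lies strictly in the region where $\chi \equiv 1$: by the definition \eqref{cp} of $\rho$, one has $\rho \geq K_{2}|x_{0}| + K_{0} > |x_{0}|$ (recall $K_{2} > 1$ and $K_{0} > 0$), and by \eqref{cha} this forces $\chi(x_{0}) = 1$. Therefore $f(x_{0}) = x_{0}$. Applying Lemma \ref{L3} componentwise gives
\begin{equation*}
\int_{\mathbb{R}^{N}} f(\epsilon y + x_{0})\, R^{2}(y)\, dy = f(x_{0})\, m + \mathcal{O}(\epsilon^{2}) = m\, x_{0} + \mathcal{O}(\epsilon^{2}),
\end{equation*}
as $\epsilon \searrow 0$, and thus $|\gamma_\epsilon(0)| = \mathcal{O}(\epsilon^{2})$. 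No step presents a serious obstacle; the only thing to check carefully is that $x_{0}$ really lies in the interior of the set where $\chi = 1$, so that the cutoff contributes trivially to $f(x_{0})$ and the vector-valued version of Lemma \ref{L3} applies cleanly.
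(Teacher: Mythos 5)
Your proof is correct and follows essentially the same route as the paper, which simply invokes Lemma \ref{L3} together with the properties of $u_{\epsilon}$ and $\chi$; your write-up is the fleshed-out version of that one-line argument. The check that $|x_{0}|<\rho$ (so that $\chi(x_{0})=1$) is exactly the point worth making explicit, and you make it correctly.
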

\begin{proof}
The proof easily follows from Lemma \ref{L3}, and the properties of the functions $u_{\epsilon}(x,t)$ and $\chi(x)$.
\end{proof}

\begin{lemma} \label{L26} 
Let $T_{\epsilon}^{\ast}>0$ be the time introduced in \eqref{Tim}. There exist positive constants $h_{0}$ and $\epsilon_{0}$, such that for a constant $C>0$,
\begin{align*}
|x(t)-y_{\epsilon}(t)|\leq C\left(|\sigma_{{\epsilon}}(t)|+|\lambda_{{\epsilon}}(t)|+|\gamma_{{\epsilon}}(t)|\right)+\mathcal{O}(\epsilon^{2})
\end{align*}
for every $t\in [0, T_{\epsilon}^{\ast}]$.
\end{lemma}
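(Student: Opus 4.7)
The plan is to test the measure-valued estimate of Lemma \ref{L23} against the coordinate cutoff functions
$$
f_j(x):=x_j\chi(x),\qquad j=1,\dots,N.
$$
Since $\chi\in C^\infty_c(\mathbb{R}^N)$, each $f_j$ lies in $C^2(\mathbb{R}^N)$ with finite norm $M:=\max_j\|x_j\chi\|_{C^2}$. After rescaling by $M$, Lemma \ref{L23} yields, for every $t\in[0,T_\epsilon^\ast]$ and $\epsilon\in(0,\epsilon_0)$,
$$
\Bigl|\frac{1}{\epsilon^N}\int_{\mathbb R^N}x_j\chi(x)|u_\epsilon(x,t)|^2\,dx-m\,(y_\epsilon(t))_j\,\chi(y_\epsilon(t))\Bigr|\leq C(|\sigma_\epsilon(t)|+|\lambda_\epsilon(t)|)+\mathcal{O}(\epsilon^2).
$$
Combined componentwise with the definition of $\gamma_\epsilon$, this gives the key identity
$$
m\bigl(x(t)-y_\epsilon(t)\chi(y_\epsilon(t))\bigr)=\gamma_\epsilon(t)+\mathcal{O}(|\sigma_\epsilon(t)|+|\lambda_\epsilon(t)|)+\mathcal{O}(\epsilon^2).
$$
If one can secure that $\chi(y_\epsilon(t))=1$, i.e.\ $|y_\epsilon(t)|\leq\rho$, then the conclusion of the lemma follows at once, with the constant $C$ absorbing the factor $1/m$.

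The task thus reduces to the geometric fact $|y_\epsilon(t)|\leq\rho$ on $[0,T_\epsilon^\ast]$, which I would establish by a standard continuity/bootstrap argument. By the explicit form of the initial datum \eqref{ID} and Lemma \ref{L25}, one checks that $|y_\epsilon(0)-x_0|=\mathcal{O}(\epsilon)$, so in particular $|y_\epsilon(0)|<\rho-K_0$ for $\epsilon<\epsilon_0$ small, by the choice of $\rho$ in \eqref{cp}. Set
$$
\tau_\epsilon:=\sup\bigl\{t\in[0,T_\epsilon^\ast]:|y_\epsilon(s)-x(s)|\leq K_0\text{ for all }s\in[0,t]\bigr\}>0.
$$
For $s\in[0,\tau_\epsilon)$ one has $|y_\epsilon(s)|\leq|x(s)|+K_0\leq(\rho-K_0)+K_0=\rho$, so $\chi(y_\epsilon(s))=1$ and the identity above yields
$$
|x(s)-y_\epsilon(s)|\leq\tfrac{1}{m}|\gamma_\epsilon(s)|+\tfrac{C}{m}(|\sigma_\epsilon(s)|+|\lambda_\epsilon(s)|)+\mathcal{O}(\epsilon^2).
$$

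To close the bootstrap I would fix $h_0>0$ small enough (and correspondingly $\epsilon_0$) so that whenever $|\sigma_\epsilon|,|\lambda_\epsilon|,|\gamma_\epsilon|\leq h_0$ on $[0,T_\epsilon^\ast]$, the right-hand side of the last display stays strictly below $K_0/2$. By continuity of the three error functions together with their initial smallness ($\sigma_\epsilon(0)=0$, $|\lambda_\epsilon(0)|,|\gamma_\epsilon(0)|=\mathcal{O}(\epsilon^2)$ from Lemmas \ref{L8} and \ref{L25}), this forces $\tau_\epsilon=T_\epsilon^\ast$, and the lemma follows. The main obstacle is precisely the self-consistency of this bootstrap: the definition \eqref{Tim} of $T_\epsilon^\ast$ only encodes the smallness of $\beta_\epsilon=|\nu||\sigma_\epsilon|+|\lambda_\epsilon|$, so one needs an independent a priori control on $|\gamma_\epsilon|$, which can be obtained by differentiating in time and invoking the continuity equation \eqref{Id1}: integration by parts transfers derivatives to $\nabla(x_j\chi)\in C^1_c(\mathbb{R}^N)$, so $\dot\gamma_\epsilon$ can be reduced to quantities already controlled by Lemmas \ref{L1}, \ref{L8} and \ref{L23}. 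Once this auxiliary estimate is in place, the bootstrap closes and the proof is complete.
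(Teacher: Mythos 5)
Your reduction of the lemma to the single geometric fact $|y_\epsilon(t)|\le\rho$ (so that $\chi(y_\epsilon(t))=1$ and Lemma \ref{L23} tested against the functions $x_j\chi$ closes the estimate) is exactly the paper's reduction, and that part of your argument is correct. Where you genuinely diverge is in how $|y_\epsilon(t)|\le\rho$ is obtained. The paper does not bootstrap on $|y_\epsilon(t)-x(t)|$: it bounds the oscillation $|y_\epsilon(t_2)-y_\epsilon(t_1)|$ directly, by showing via the continuity equation \eqref{Id1} and the uniform momentum bound of Lemma \ref{L1} that the map $t\mapsto \epsilon^{-N}|u_\epsilon(\cdot,t)|^2\,dx$ is Lipschitz in $C^{2\ast}$ with constant independent of $\epsilon$, transferring this to $\delta_{y_\epsilon(t)}$ through Lemma \ref{L23}, and then invoking \eqref{Dp} after choosing $T_0$ and $h_0$ small; since $y_\epsilon(0)$ is within $\mathcal{O}(\epsilon)$ of $x_0$, this confines $y_\epsilon(t)$ to the ball of radius $\rho$ with no reference whatsoever to $\gamma_\epsilon$. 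Your bootstrap instead needs an a priori bound on $|\gamma_\epsilon|$ that the definition \eqref{Tim} of $T^{\ast}_{\epsilon}$ does not supply; you correctly flag this as the obstacle, and your proposed fix does work: by \eqref{Id1} one gets $\dot\gamma_\epsilon(t)=m\nu(t)+\int\nabla(x\chi)\cdot p_\epsilon\,dx$, hence $|\gamma_\epsilon(t)|\le \mathcal{O}(\epsilon^2)+Ct$ using only Lemma \ref{L1} (no appeal to Lemma \ref{L23} is needed there, and none should be made, to avoid any appearance of circularity). Note that this auxiliary estimate is precisely the paper's Lipschitz-in-time bound specialized to the test function $x\chi$, so the two proofs rest on the same ingredients and both require shrinking $T_0$; the paper's packaging is cleaner because it dispenses with the continuity/bootstrap step, while yours makes the role of $\gamma_\epsilon$ explicit but leaves the key auxiliary estimate as a sketch that must be written out for the proof to be complete. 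One small correction: the initial localization $|y_\epsilon(0)-x_0|=\mathcal{O}(\epsilon)$ follows from the boundedness of $z_\epsilon$ in Theorem \ref{T1}/Proposition \ref{PA12}, not from Lemma \ref{L25}, which only concerns $\gamma_\epsilon(0)$.
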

\begin{proof}
First, we claim that there exists $T_{0}>0$ such that $|y_{\epsilon}(t)|<\rho$, for every $t\in [0, T_{\epsilon}^{\ast})$ with $T_{\epsilon}^{\ast}\leq T_{0}$, where the constant $\rho$ is defined in \eqref{cp}. Let us first prove that
\begin{align*}
\|\delta_{y_{\epsilon} (t_{1})}-\delta_{y_{\epsilon}(t_{2})}\|_{C^{2\ast}}<\rho, \quad \text{for all $t_{1}$, $t_{2}\in [0, T_{\epsilon}^{\ast}]$.}
\end{align*}
Let $f\in C^{2}({\mathbb R}^{N})$ with $\|f\|_{C^{2}}\leq 1$ and pick $t_{1}$, $t_{2}\in [0, T_{\epsilon}^{\ast})$ . From Lemma \ref{L1} and identity \eqref{Id1} we see that
\begin{align*}
\frac{1}{\epsilon^{N}}\int_{{\mathbb R}^{N}}&\left(|u_{\epsilon}(x, t_{2})|^{2}-|u_{\epsilon}(x, t_{1})|^{2}\right)f(x)\, dx=\frac{1}{\epsilon^{N}}\int_{{\mathbb R}^{N}}\int^{t_{2}}_{t_{1}}\frac{\partial |u_{\epsilon}|^{2}}{\partial t}(x,t)f(x)\,dtdx\\
&=\int_{{\mathbb R}^{N}}\int^{t_{2}}_{t_{1}}-f(x)\mbox{div}_{x}p_{\epsilon}(x,t)dtdx=\int^{t_{2}}_{t_{1}}\int_{{\mathbb R}^{N}}\nabla f(x)\cdot p_{\epsilon}(x,t)\,dxdt\\
&\leq \|\nabla f\|_{L^{\infty}}\int^{t_{2}}_{t_{1}}dt\int_{{\mathbb R}^{N}}|p_{\epsilon}(x,t)|dx\leq C|t_{2}-t_{1}|.
\end{align*}
Therefore, there exists a constant $C>0$ such that
\begin{align*}
\left\|\frac{|u_{\epsilon}(x, t_{2})|^{2} }{\epsilon^{N}}dx-\frac{|u_{\epsilon}(x, t_{1})|^{2} }{\epsilon^{N}}dx\right\|_{C^{2\ast}}\leq C|t_{2}-t_{1}|\leq CT_{0}.
\end{align*}
Now, from Lemma \ref{L23} we obtain
\begin{align*}
m\|\delta_{y_{\epsilon}(t_{2})}-\delta_{y_{\epsilon}(t_{1})}\|_{C^{2\ast}}\leq CT_{0}+C|\sigma_{{\epsilon}}(t)|+C|\lambda_{{\epsilon}}(t)|+\mathcal{O}(\epsilon^{2})\leq C(T_{0}+h/2)+\mathcal{O}(\epsilon^{2}).
\end{align*}
Here we choose $T_{0}$ and then $\epsilon_{0}$, $h_{0}$ such that $ C(T_{0}+h/2)+\mathcal{O}(\epsilon^{2})<\min\left\{m K_{0}, mK_{1}K_{0}\right\}$, where $K_{0}$ and $K_{1}$ are  the constants defined in formula \eqref{Dp}. Thus, from inequality \eqref{Dp} we get $|y_{\epsilon}(t_{2})-y_{\epsilon}(t_{1})|<K_{0}$ for every $t_{1}$, $t_{2}\in [0, T_{\epsilon}^{\ast})$, and since $y_{\epsilon}(0)=0$,  this implies the claim.
We now conclude the proof of lemma.  Since the definition of $\chi$, it follows that 
\begin{align*}
|x(t)-y_{\epsilon}(t)|&=\frac{1}{m}|mx(t)-my_{\epsilon}(t)|\\
&\leq \frac{1}{m}|\gamma_{\epsilon}(t)|+\frac{1}{m}\left|\int_{{\mathbb R}^{N}}x\chi(x)\frac{|u_{\epsilon}(x,t)|^{2}}{\epsilon^{N}}dx-my_{\epsilon}(t)\right|.
\end{align*}
Notice that from claim above and \eqref{cha} we see that $\chi(y_{\epsilon}(t))=1$ for all $t\in [0, T_{\epsilon}^{\ast})$. In particular, there exists a constant $C>0$ such that
\begin{align*}
|x(t)-y_{\epsilon}(t)|\leq C \|x\chi\|_{C^{2}}\left\|\frac{|u_{\epsilon}(x,t)|^{2}}{\epsilon^{N}}dx-m\delta_{y\epsilon}(t)\right\|_{C^{2\ast}}+C|\gamma_{\epsilon}(t)|.
\end{align*}
Then the statement follows by Lemma \ref{L23}.
\end{proof}

Using Lemmas \ref{L23} and \ref{L26} and inequality \eqref{Dp}, one can prove the following result.
\begin{lemma} \label{L29} 
There exists a positive constant $C$ such that
\begin{align*}
&\left\|\frac{|u_{\epsilon}(x,t)|^{2}}{\epsilon^{N}}dx-m\delta_{x(t)}\right\|_{(C^{2})^{\ast}}+\left\|p_{\epsilon}(x,t)dx-m\nu(t)\delta_{x(t)}\right\|_{(C^{2})^{\ast}} 
\leq C\Xi(t)+\mathcal{O}(\epsilon^{2}),
\end{align*}
where $\Xi(t):=|\sigma_{{\epsilon}}(t)|+|\lambda_{{\epsilon}}(t)|+ |\gamma_{{\epsilon}}(t)|$,  for all $\epsilon\in (0, \epsilon_{0})$ and $t\in [0, T^{\ast}_{\epsilon})$
\end{lemma}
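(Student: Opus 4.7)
The natural strategy is to reduce Lemma \ref{L29} to Lemma \ref{L23} by comparing the delta measures centered at $y_\epsilon(t)$ and $x(t)$. By the triangle inequality in $(C^2)^\ast$,
\begin{equation*}
\Bigl\|\tfrac{|u_\epsilon|^2}{\epsilon^N}dx - m\delta_{x(t)}\Bigr\|_{(C^2)^\ast}
\le \Bigl\|\tfrac{|u_\epsilon|^2}{\epsilon^N}dx - m\delta_{y_\epsilon(t)}\Bigr\|_{(C^2)^\ast}
+ m\,\|\delta_{y_\epsilon(t)}-\delta_{x(t)}\|_{(C^2)^\ast},
\end{equation*}
and analogously for the momentum term, where the second estimate additionally uses the factor $|\nu(t)|$, which is bounded uniformly in $t\in[0,T_0]$ since $(x(t),\nu(t))$ solves the Hamiltonian system \eqref{HS} with $V\in C^3$ bounded with derivatives.

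The first summand on each right-hand side is controlled by Lemma \ref{L23}: it is bounded by $C|\sigma_\epsilon(t)|+C|\lambda_\epsilon(t)|+\mathcal{O}(\epsilon^2)$, which is already of the required form. For the second summand, I would invoke inequality \eqref{Dp}: provided $\|\delta_{y_\epsilon(t)}-\delta_{x(t)}\|_{C^{2\ast}}\le K_0$, one has
\begin{equation*}
\|\delta_{y_\epsilon(t)}-\delta_{x(t)}\|_{C^{2\ast}}\le K_2\,|y_\epsilon(t)-x(t)|,
\end{equation*}
and then Lemma \ref{L26} gives $|y_\epsilon(t)-x(t)|\le C(|\sigma_\epsilon(t)|+|\lambda_\epsilon(t)|+|\gamma_\epsilon(t)|)+\mathcal{O}(\epsilon^2)$, producing exactly the bound in terms of $\Xi(t)$ we seek. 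Combining these estimates for both the density and the momentum term, the conclusion of Lemma \ref{L29} follows.

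The one point that requires care, and which I view as the main technical obstacle, is checking that the hypothesis $\|\delta_{y_\epsilon(t)}-\delta_{x(t)}\|_{C^{2\ast}}\le K_0$ of \eqref{Dp} is actually in force for all $t\in[0,T^\ast_\epsilon)$ and all sufficiently small $\epsilon$. This is not immediate from Lemma \ref{L26}, since its right-hand side $\Xi(t)+\mathcal{O}(\epsilon^2)$ only becomes small after one knows $\Xi(t)\le h/2$ (which is precisely the condition built into the definition \eqref{Tim} of $T^\ast_\epsilon$) and $\epsilon\le\epsilon_0$. I would handle this by shrinking $h_0$ and $\epsilon_0$ further, exactly as in the concluding step of the proof of Lemma \ref{L26}, so that $C\Xi(t)+\mathcal{O}(\epsilon^2)$ is bounded by $K_0/K_2$; this places $y_\epsilon(t)$ in the regime where the \emph{bi-Lipschitz} estimate \eqref{Dp} is available, closing the argument.
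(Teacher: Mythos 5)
Your argument is correct and is exactly the route the paper intends: it states before the lemma that the result follows from Lemmas \ref{L23} and \ref{L26} together with inequality \eqref{Dp}, and then defers the details to the analogous Lemma 6.4 of \cite{SQ1}, which you have simply written out (triangle inequality, Lemma \ref{L23} for the $\delta_{y_\epsilon(t)}$ terms, then $\|\delta_{y_\epsilon(t)}-\delta_{x(t)}\|_{C^{2\ast}}\leq K_2|y_\epsilon(t)-x(t)|$ and Lemma \ref{L26}). The only minor remark is that the ``obstacle'' you flag is not really one: the upper bound in \eqref{Dp} is just the mean value theorem applied to $f$ with $\|f\|_{C^2}\leq 1$ and holds unconditionally, the smallness hypothesis being needed only for the lower bound; still, your fix by shrinking $h_0$ and $\epsilon_0$ is harmless and consistent with the proof of Lemma \ref{L26}.
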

\begin{proof}
The proof follows the same lines as Lemma 6.4 in \cite{SQ1}.
\end{proof}

In Lemma \ref{L26} we have fixed $T_{0}$ such that Proposition \ref{PA12} and Lemmas \ref{L23} and \ref{L29} hold. With this in mind,
now we give the proof of Theorem \ref{T2}.

\begin{proof}[\bf{Proof of Theorem \ref{T2}}] 
The proof follows the lines of the corresponding proof in \cite[Theorem 1.1 and Lemma 3.6]{SK2}. Let us give a brief sketch of the proof. First, we want to use a Gronwall inequality argument to show that 
\begin{align}\label{Coo}
|\sigma_{\epsilon}(t)|+|\lambda_{\epsilon}(t)|+|\gamma_{\epsilon}(t)|\leq C(T_{0})\epsilon^{2}\quad \text{for every $t\in [0, T^{\ast}_{\epsilon}]$}.
\end{align}
Indeed, from identities \eqref{Id1}, \eqref{Id2} and Lemmas \ref{L26} and \ref{L29},  and repeating the steps of the proof of Lemma 3.6 in \cite{SK2} we get for every $t\in [0, T^{\ast}_{\epsilon}]$,
\begin{align*}
\left|\frac{{d}}{{d}t}\sigma_{\epsilon}(t)\right|+\left|\frac{{d}}{{d}t}\lambda_{\epsilon}(t)\right|+\left|\frac{{d}}{{d}t}\gamma_{\epsilon}(t)\right|\leq C\left[|\sigma_{\epsilon}(t)|+|\lambda_{\epsilon}(t)|+|\gamma_{\epsilon}(t)|+\mathcal{O}(\epsilon^{2})\right],
\end{align*}
for some positive constant $C$. Moreover, by Lemmas \ref{L8} and \ref{L25} we see that $|\sigma_{\epsilon}(0)|+|\lambda_{\epsilon}(0)|+|\gamma_{\epsilon}(0)|=\mathcal{O}(\epsilon^{2})$, then \eqref{Coo} is a simple consequence of the Gronwall inequality. By the definition of $T^{\ast}_{\epsilon}$  in formula \eqref{Tim} and  due to the continuity of  $\sigma_{\epsilon}$, $\lambda_{\epsilon}$ and $\gamma_{\epsilon}$  one gets $T^{\ast}_{\epsilon}=T_{0}$  for $\epsilon$ small enough, $\epsilon\in (0, \epsilon_{0})$. Next, in light of Proposition \ref{PA12} there exist families of bounded functions $\theta_{\epsilon}: \mathbb{R}\rightarrow [0, 2\pi)$, $y_{\epsilon}:\mathbb{R}\rightarrow \mathbb{R}^{N}$ such that 
\begin{equation*}
\left\|u_{\epsilon}(\cdot, t)-e^{\frac{1}{\epsilon}(\nu(t)\cdot x+\theta_{\epsilon}(t))}R\left(\frac{x-y_{\epsilon}(t)}{\epsilon}\right)\right\|^{2}_{H^{1}_{\epsilon}}=\mathcal{O}(\epsilon^{2}),
\end{equation*}
for all $t\in [0, T_{0}]$. Furthermore, from Lemma \ref{L26} and \eqref{Coo}, it is clear that $|x(t)-y_{\epsilon}(t)|\leq C\epsilon^{2}$ for $t\in [0, T_{0}]$ and $\epsilon\in (0, \epsilon_{0})$. Therefore,
\begin{equation*}
\left\|R\left(\frac{x-y_{\epsilon}(t)}{\epsilon}\right)-R\left(\frac{x-x(t)}{\epsilon}\right)\right\|^{2}_{H^{1}_{\epsilon}}\leq C\frac{|x(t)-y_{\epsilon}(t)|^{2}}{\epsilon^{2}}=\mathcal{O}(\epsilon^{2}),
\end{equation*}
for every $t\in [0, T_{0}]$ and $\epsilon\in (0, \epsilon_{0})$. Hence the Theorem \ref{T2} holds in $[0, T_{0}]$. Finally, taking as new data
$x(T_{0})$ and $\nu(T_{0})$ in system \eqref{HS} and 
\begin{equation*}
u_{\epsilon, 0}(x):=e^{\frac{i}{\epsilon}x\cdot \nu(T_{0})}e^{\frac{1+N}{2}}e^{-\frac{1}{\epsilon^{2}}|x-x(T_{0})|^{2}},
\end{equation*}
as a new initial data in Cauchy problem \eqref{0NL}, the statement is valid on time $[T_{0}, 2T_{0}]$. Since $T_{0}$ only depends on the problem, we can achieve any finite time interval [0, T].  This concludes the proof of Theorem \ref{T2}.
\end{proof}

\bigskip

\end{document}